\newcommand{\ignore}[1]{}
\newtheorem{theorem}{Theorem}[section]
\newtheorem{proposition}[theorem]{Proposition}
\newtheorem{lemma}[theorem]{Lemma}
\newtheorem{corollary}[theorem]{Corollary}
\newtheorem{conjecture}[theorem]{Conjecture}
\newcommand{\mulem}{{\bf{Lemma}}.\;}  
\theoremstyle{definition}
\newtheorem{defin}[theorem]{Definition}
\newtheorem{example}[theorem]{Example}
\newtheorem{remark}[theorem]{Remark}
\newtheorem{para}[theorem]{}
\newcommand{\beq}{\begin{equation}}
\newcommand{\eq}{\end{equation}} 
\newcommand{\C}{{\mathbb C}}
\newcommand{\Z}{{\mathbb Z}}
\newcommand{\R}{{\mathbb R}}
\newcommand{\mat}{\begin{pmatrix}}
\newcommand{\tam}{\end{pmatrix}}
\newcommand{\LL}{{\mathsf{L}}} 
\newcommand{\LLL}{{\mathsf{L}'}} 
\newcommand{\bra}[1]{\langle #1 |} 
\newcommand{\ket}[1]{| #1 \rangle}
\newcommand{\dfour}[4]{\smat #1 \\ &#2 \\ &&#3 \\ &&&#4 \stam}
\newcommand{\Stwo}[4]{\smat #1 \\ &&#2 \\ &#3 \\ &&&#4 \stam}
\newcommand{\aaa}{{\mathsf{a}}}
\newcommand{\fff}{{\mathsf{f}}}
\newcommand{\DR}{\mathcal{R}}
\newcommand{\one}{\square}
\newcommand{\two}{\square\!\square} \newcommand{\three}{\square\!\square\!\square} 
\newcommand{\four}{\square\!\square\!\square\!\square} 
\newcommand{\oneone}{\!\!\begin{array}{c} \square \vspace{-.081in}\\ 
\vspace{-.1in}\square \vspace{.08in} \end{array}\!\!}
\newcommand{\oneoneone}{\!\!\begin{array}{c} 
\square \vspace{-.081in}\\ 
\vspace{-.08in}
\square 
\\ 
\square 
\end{array}\!\!}
\newcommand{\oneonexone}{\!\!\begin{array}{c} 
\square \vspace{-.081in}\\ 
\vspace{-.08in}
\blacksquare 
\\ 
\square 
\end{array}\!\!}
\newcommand{\oneoneonex}{\!\!\begin{array}{c} 
\square \vspace{-.081in}\\ 
\vspace{-.08in}
\square 
\\ 
\blacksquare 
\end{array}\!\!}
\newcommand{\oneonexonex}{\!\!\begin{array}{c} 
\square \vspace{-.081in}\\ 
\vspace{-.08in}
\blacksquare 
\\ 
\blacksquare 
\end{array}\!\!}
\newcommand{\oneonex}{\!\!\begin{array}{c} \square \vspace{-.081in}\\ 
\vspace{-.1in}\blacksquare \vspace{.08in} \end{array}\!\!}
\newcommand{\twoone}{\!\!\begin{array}{l} \square\!\square  \vspace{-.081in}\\ 
\vspace{-.1in}\square \vspace{.08in} \end{array}\!\!}
\newcommand{\threeone}{\!\!\begin{array}{l} \square\!\square\!\square   \vspace{-.081in}\\ 
\vspace{-.1in}\square \vspace{.08in} \end{array}\!\!}
\newcommand{\twoonex}{\!\!\begin{array}{l} \square\!\square  \vspace{-.081in}\\ 
\vspace{-.1in}\blacksquare \vspace{.08in} \end{array}\!\!}
\newcommand{\onetwo}{\begin{array}{l} \square \vspace{-.081in}\\ 
\vspace{-.1in}\square\!\square \vspace{.08in} \end{array}\!\!}
\newcommand{\onetwox}{\begin{array}{l} \square \vspace{-.081in}\\ 
\vspace{-.1in}\blacksquare\!\blacksquare \vspace{.08in} \end{array}\!\!}
\newcommand{\oneN}[1]{\one\hspace{-.1876cm}
\raisebox{.045cm}{\mbox{\scriptsize{#1}}} \;}
\newcommand{\twoN}[2]{\two\hspace{-.5431cm} 
\raisebox{.051cm}{
\mbox{\scriptsize{#1 \!\!  #2}}} \hspace{.21cm}}
\newcommand{\twoNN}[2]{\two\hspace{-.5431cm} 
\raisebox{.051cm}{\scalebox{0.7}{
\mbox{\scriptsize{#1} \hspace*{-.15cm}  \scriptsize{#2}}}} \hspace{.21cm}}
\newcommand{\oneoneN}[2]{\oneone\hspace{-.272cm} 
\raisebox{.11cm}{\mbox{\scriptsize{#1 }}} \!\!\!\!
\raisebox{-.0751cm}{\mbox{\scriptsize{#2 }}} 
\hspace{.1cm}
}
\newcommand{\AR}{a_1}
\newcommand{\BR}{b_{12}}
\newcommand{\CR}{c_{12}}
\newcommand{\AAR}{a_2}
\newcommand{\ARx}{A_1}
\newcommand{\BRx}{B_{12}}
\newcommand{\CRx}{C_{12}}
\newcommand{\AARx}{A_2}
\newcommand{\ARxx}{\alpha_1}
\newcommand{\BRxx}{\beta_{12}}
\newcommand{\CRxx}{\gamma_{12}}
\newcommand{\AARxx}{\alpha_2}
\newcommand{\smat}{\left(\begin{smallmatrix}}
\newcommand{\stam}{\end{smallmatrix}\right)}
\newcommand{\xD}{D^{\!-\!}}
\newcommand{\Pas}{\mathrm{Pascal}} 
\newcommand{\SSSS}{\mathbb{S}} 
\newcommand{\SSSSC}{\mathbb{S}^{\C}} 
\newcommand{\sS}{\mathrm{S}}
\newcommand{\slosh}[1]{\smat 0&\frac{\mu_{#1}}{C_{#1}}\\ \mu_{#1} C_{#1}&0\stam }  
\newcommand{\slush}[1]{\smat 0&\frac{1}{c_{#1}}\\  c_{#1}&0\stam }  
\newcommand{\slesh}[1]{\smat 0&\frac{\mu_{#1}}{c_{#1} C_{#1}}\\ \mu_{#1} c_{#1} C_{#1}&0\stam }  
\newcommand{\sleesh}[2]{\smat 0&\frac{\mu_{#1}}{c_{#2} C_{#1}}\\ \mu_{#1} c_{#2} C_{#1}&0\stam }  
\newcommand{\footnotex}[1]{}  
\newcommand{\off}{/}
\newcommand{\Rec}{{\mathsf R}}     
\newcommand{\N}{{\mathbb N}}
\newcommand{\ul}[1]{\underline{#1}}
\newcommand{\Match}{{\mathsf{Match}}}
\newcommand{\PP}{{\mathsf P}}  
\newcommand{\SSS}{{\mathfrak S}}  
\newcommand{\TTT}{{\mathfrak T}}  
\newcommand{\Power}{{\mathcal P}}   
\newcommand{\alphaa}{\pmb{\alpha}} 
\newcommand{\partp}{\pi_{\mathsf{p}}}
\newcommand{\partq}{\pi_{\mathsf{q}}} 
\newcommand{\partr}{\pi_{\rho}} 
\newcommand{\partt}{\ul{\pi}} 
\newcommand{\parts}{\pi_{\mathsf{s}}} 
\newcommand{\partrr}{\pi_{\mathsf{r}}}
\newcommand{\functor}{\mathfrak{Func}}  
\newcommand{\soutx}[1]{\ignore{#1}}
\newlength{\ccm}
\newcommand{\frametitle}[1]{}
\newcommand{\frx}[2]{}
\newcommand{\chapter}{\section}
\newcounter{minidef}[section]
\newcommand{\mdef}{\refstepcounter{theorem} 
\medskip \noindent ({\bf \thetheorem}) }
\newcounter{minicapt}
\newcommand{\fii}{\mathsf{f}} 
\newcommand{\ai}{\mathsf{a}}  
\newcommand{\pfii}{{\fii}} 
\newcommand{\uppfii}{\pfii}
\newcommand{\pai}{{\ai}}
\newcommand{\mfii}{\underline{\fii}} 
\newcommand{\mai}{\underline{\ai}}
\newcommand{\uppai}{\pai}
\newcommand{\ooo}[1]{\textcolor{orange}{\textcircled{\tiny #1}}}
\newcommand{\ber}[5]{ 
\xymatrix{
       &  {a_{#1}}_{\ooo{2}} \ar[dr]|{#4} \\
{{a_1}}^{\ooo{1}} \ar[ur]|{#3} \ar[rr]|{#5} &&  {}_{\ooo{3}}{a_{#2}}
}}
\newcommand{\berr}[6]{ 
\xymatrix{
       &  {{#2}}_{\ooo{2}} \ar[dr]|{#5} \\
{{#1}}^{\ooo{1}} \ar[ur]|{#4} \ar[rr]|{#6} &&  {}_{\ooo{3}}{{#3}}
}}
\newcommand{\bermss}[6]{ 
\xymatrix{
       &  {a_{#2}}_{\ooo{2}} \ar[dr]|{#5}="b" \\
{{a_{#1}}}^{\ooo{1}} \ar[ur]|{#4}="a" \ar[rr]|{#6} &&  {}_{\ooo{3}}{a_{#3}}
\ar@{=}_{} "a";"b"
}}
\newcommand{\bermpp}[6]{ 
\xymatrix{
       &  {a_{#2}}_{\ooo{2}} \ar[dr]|{#5}="b" \\
{{a_{#1}}}^{\ooo{1}} \ar[ur]|{#4}="a" \ar[rr]|{#6}="c" &&  {}_{\ooo{3}}{a_{#3}}
\ar@{=}_{} "a";"b"
\ar@{=}_{} "a";"c"
\ar@{=}_{} "b";"c"
}}
\newcommand{\mbermpp}[9]{ 
       &  {a_{#2}}_{\ooo{2}} \ar[dr]|{#5}="b" \\
{{a_{#1}}}^{\ooo{1}} \ar[ur]|{#4}="a" \ar[rr]|{#6}="c" &&  {}_{\ooo{3}}{a_{#3}}
\ar@{=}|{#7} "a";"b"
\ar@{=}|{#8} "a";"c"
\ar@{=}|{#9} "b";"c"
}
\newcommand{\mbermupp}[9]{ 
       &  {{#2}}_{\ooo{2}} \ar[dr]|{#5}="b" \\
{{{#1}}}^{\ooo{1}} \ar[ur]|{#4}="a" \ar[rr]|{#6}="c" &&  {}_{\ooo{3}}{{#3}}
\ar@{=}|{#7} "a";"b"
\ar@{=}|{#8} "a";"c"
\ar@{=}|{#9} "b";"c"
}
\newcommand{\abcab}{\ar@{=}_{} "a";"b"}
\newcommand{\abcac}{\ar@{=}_{} "a";"c"}
\newcommand{\abcbc}{\ar@{=}_{} "b";"c"}
\newcommand{\bermxx}[7]{ 
\xymatrix{
       &  {a_{#2}}_{\ooo{2}} \ar[dr]|{#5}="b" \\
{{a_{#1}}}^{\ooo{1}} \ar[ur]|{#4}="a" \ar[rr]|{#6}="c" &&  {}_{\ooo{3}}{a_{#3}}
#7
}}
\newcommand{\bertx}[9]{  
\xymatrix@R=37pt{
& {}^{a_{#1}} \ooo{2}_{} \ar[rr]^{#5} \ar[drrr]^{#8}  && \ooo{3}^{a_{#2}} \ar[dr]^{#7} \\
{}_{a_1}\ooo{1} \ar[ur]^{#4} \ar[urrr]^{#6} \ar[rrrr]^{#9} &&&& \ooo{4}_{a_{#3}}
}}
\newcommand{\bert}[9]{ 
\xymatrix@R=37pt{
& {a_{#1}}_{\ooo{2}} \ar[rr]^{#5} \ar[drrr]^{#8}  && {}_{\ooo{3}}{a_{#2}} \ar[dr]^{#7} \\
{{a_1}}^{\ooo{1}} \ar[ur]^{#4} \ar[urrr]^{#6} \ar[rrrr]^{#9} &&&& {}^{\ooo{4}}{a_{#3}}
}}
\newcommand{\berto}[9]{ 
\xymatrix@C=10pt{
1 \ar[drr]^{#4}="a"
  \ar@/^1pc/@{=}[rrrrrr]
           &&                  &          &
                                           && 1
                                           \ar[dll]^{#5}_{\;\;\;}="b"
                                           \ar@/^2pc/@{=}[dddlll] \\
           && #1 \ar[rr]^{#6}="c" \ar[dr]^{#7} &          & #3   \\
           &&                  & #2 \ar[ur]^{#8} &   \\
           &&                  & 1 \ar[u]^{#9} \ar@/^2pc/@{=}[uuulll]
}}
\newcommand{\bertovold}[9]{ 
\xymatrix@C=10pt{
1 \ar[drr]^{#4}="a"
  \ar@/^1pc/@{=}[rrrrrr]
           &&                  &          &
                                           && 1
                                           \ar[dll]^{#5}_{\;\;\;}="b"
                                           \ar@/^2pc/@{=}[dddlll] \\
           && #1 \ar[rr]^{#6}="c" \ar[dr]_{#7} &          & #3   \\
           &&                  & #2 \ar[ur]^{#8} &   \\
           &&                  & 1 \ar[u]^{#9} \ar@/^2pc/@{=}[uuulll]
           \ar@{=} "a";"b"
            \ar@{=} "b";"c"
            \ar@{=} "a";"c"
}}
\newcommand{\bertov}[9]{ 
\xymatrix@C=10pt{
1 \ar[drr]|{#4}="a"
  \ar@/^1pc/@{=}[rrrrrr]
           &&                  &          &
                                           && 1
                                           \ar[dll]|{#5}="b"
                                           \ar@/^2pc/@{=}[dddlll] \\
           && #1 \ar[rr]|{#6}="c" \ar[dr]|{#7} &          & #3   \\
           &&                  & #2 \ar[ur]|{#8} &   \\
           &&                  & 1 \ar[u]|{#9} \ar@/^2pc/@{=}[uuulll]
           \ar@{=} "a";"b"
            \ar@{=} "b";"c"
            \ar@{=} "a";"c"
}}
\newcommand{\abcccc}{ 
        \ar@{=} "a";"b"
        \ar@{=} "b";"c"
        \ar@{=} "a";"c"    }
\newcommand{\cdeeee}{ 
        \ar@{=} "c";"d"
        \ar@{=} "d";"e"
        \ar@{=} "c";"e"    }
\newcommand{\adffff}{ 
        \ar@/_/@{=} "a";"d"
        \ar@/_/@{=} "d";"f"
        \ar@/_/@{=} "a";"f"    }
\newcommand{\beffff}{ 
        \ar@/^/@{=} "b";"e"
        \ar@/^/@{=} "e";"f"
        \ar@/^/@{=} "b";"f"    }
\newcommand{\bertoxx}[9]{ 
\xymatrix@C=10pt{
1 \ar[drr]|{#4}="a"
  \ar@/^1pc/@{=}[rrrrrr]
           &&                  &          &
                                           && 1
                                           \ar[dll]^{#5}_{\;\;\;}="b"
                                           \ar@/^2pc/@{=}[dddlll] \\
           && #1 \ar[rr]|{#6}="c" \ar[dr]_{#7} &          & #3   \\
           &&                  & #2 \ar[ur]_{#8} &   \\
           &&                  & 1 \ar[u]^{9} \ar@/^2pc/@{=}[uuulll]
#9
}}
\newcommand{\bertox}[9]{ 
\xymatrix@C=10pt{
1 \ar[drr]|{#4}="a"
  \ar@/^1pc/@{=}[rrrrrr]
           &&                  &          &
                                           && 1
                                           \ar[dll]|{#5}="b"
                                           \ar@/^2pc/@{=}[dddlll] \\
           && #1 \ar[rr]|{#6}="c" \ar[dr]|{#7}="d" &          & #3   \\
           &&                  & #2 \ar[ur]|{#8}="e" &   \\
           &&                  & 1 \ar[u]|{\x}="f" \ar@/^2pc/@{=}[uuulll]
#9
}}
\newcommand{\pyr}[9]{
\xymatrix@C=30pt@R=13pt{
1 \ar[rr]|{#1} \ar[dr]|{#2} \ar[dddr]|{#3}
                         &                  & #9 \\
                         & #7 \ar[ur]|{#4} \ar[dd]|{#5}    \\ \\
                         & #8 \ar[uuur]|{#6}
}}
\newcommand{\ccc}{\bullet}
\newcommand{\trin}[6]{ \xymatrix{& #3\; \ccc\;\; \ar@{-}[dr]^{#4}
\\
#1 \ccc \ar@{-}[rr]_{#6} \ar@{-}[ur]^{#2} && \ccc #5  }}
\newcommand{\Bcat}{\mathsf{B}}
\newcommand{\Mat}{\mathsf{Mat}} 
 \newcommand{\sgn}{\operatorname{sgn}}
\newcommand{\ppm}[1]{\textcolor{red}{#1}}
\newcommand{\ecr}[1]{\textcolor{orange}{#1}}
\newcommand{\ft}[1]{\textcolor{purple}{#1}}
\newcommand{\RR}{\mathsf{R}^{\circ}} 
\newcommand{\JJJ}{\mathbb{J}^{\pm}} 
\newcommand{\xx}{\ul{\mathsf{x}}} 
\newcommand{\Sym}{\Sigma} 
\newcommand{\PPmat}{\mathsf{P}\!} 
\newcommand{\IImat}{\mathbf{1}\!}  
\newcommand{\F}{\mathsf{F}}  
\newcommand{\FF}{\mathsf{F}}  
\newcommand{\MM}{\mathscr{M}}  
\title{Classification of charge-conserving loop braid representations}
\author{Paul Martin, 
Eric C. Rowell, and 
Fiona Torzewska}
\date{\today}
\begin{document}

\maketitle

\tableofcontents 

\medskip \hspace{.41in} 
\newpage 

\begin{abstract}
Here a loop braid representation is a monoidal functor 
$\FF$ 
from the loop braid category 
$\LL$ 
to a suitable target category,
and is $N$-charge-conserving if that target is the category 
$\Match^N$  
of charge-conserving matrices
(specifically $\Match^N$ is 
the {same} rank-$N$ charge-conserving monoidal subcategory of 
the monoidal category $\Mat$
used to classify braid representations in \cite{MR1X}) {with $\FF$ strict, and surjective on $\N$, the object monoid}.
We classify and construct all such representations. 
In particular we prove that representations fall into
varieties indexed by 
a set in bijection with the set of 
pairs of plane partitions of 
total degree $N$. 
\end{abstract}

\medskip 

\section{Introduction}

Viewing the braid group $B_n$ as a group of motions of $n$ points in the 2-disk  leads to vast generalisations when pondered in 3 spatial dimensions,
{including} 
motions of links in the 3-ball \cite{Dahm}.  The simplest of these is the loop braid group $LB_n$: motions of $n$ unlinked, oriented circles 
\cite{Damiani,Goldsmith,Dahm,TMM}.
The representation theory of $LB_n$ is 
(despite much intriguing progress - see for example 
\cite{Bardakov,Vershinin,Soulieetal,BMM19,DMM,KMRW})
largely unknown, 
and the aim of a systematic study of extending braid representations to $LB_n$ inspired  \cite{DMR} in which  a loop braid group version of the Hecke algebra was discovered.  This revealed a surprise: {there exists} a $4\times 4$ non-group-type (see \cite{KMRW}) Yang-Baxter operator $R$ that admits a lift $(R,S)$ yielding a local representation of $LB_n$.
Is this $R$ an isolated example, or does it fit into a larger family?  An appropriate context for answering this question is suggested by a salient feature of this $R$: it is \emph{charge conserving}\footnote{The idea for the term charge-conserving comes from the XXZ spin-chain setting 
- cf. e.g. \cite[Ch.8 {\it et seq}]{Baxter82} - 
{hence also `spin-chain representation',}
but the spin-chain context makes less sense for loop braid. 
} 
in 
{the sense of \cite{MR1X} (also}
described below). 

In this article we classify charge conserving loop braid representations.  A preliminary step 
is to classify charge conserving braid representations, which was carried out in \cite{MR1X}. 
Our results can be interpreted as a classification of monoidal functors from the Loop braid category $\LL$
to the category of charge-conserving matrices $\Match^N$
that are
{surjective on objects}, and strict.
{This $\LL$ is the diagonal category made up of loop braid groups
$LB_n$, 
exactly paralleling the relationship  between MacLane's braid category \cite{MaclaneBook} and the Artin braid groups \cite{Artin}. 

Just as the braid groups and the Yang-Baxter equation manifest as key components of several areas of mathematics and physics, so the loop braid groups are key to applications that require a higher-dimensional generalisation.  
{Their} study is thus partially 
motivated by various such applications. 
One is the aim of 
formulating a notion of higher quantum group (cf. e.g. \cite{Kassel,BMM19}).
Another is  
the aim of
determining statistics of loop-like excitations (see e.g. \cite{NLYphys}) in $3$D topological phases of matter (see e.g. \cite{LKW,BCW}), which in turn has applications to topological quantum computation, see e.g. \cite{Nayaketal,Rowell_Wang}.
And another example is 
construction of solutions to the tetrahedron equation \cite{KapranovVoevodsky,Elgueta}.

\label{pa:claim}
The result 
(Theorem~\ref{th:main})
may be summarised  as follows. 
\\
The set of all
varieties of charge-conserving loop braid
representations may be indexed by the set of 
`signed multisets' of   
compositions
where each composition has at most two parts. 
A {\em{signed multiset}} is 
an ordered pair of multisets. 
For example
\[ \lambda \; = \; 
\ytableausetup{smalltableaux} 
\left( \; 
\begin{ytableau} { }  \end{ytableau} \;\;\;
\begin{ytableau} {}  \end{ytableau} \;\;\;
\begin{ytableau} {}  \end{ytableau} \;\;\;
\begin{ytableau} {}&{} \\ {}  \end{ytableau} \;\;\;
\begin{ytableau} {} \\ {}&{}  \end{ytableau} \;\;\;
\begin{ytableau} {}&{} \\ {}&{}  \end{ytableau} \;\;\;
, \; \;
\begin{ytableau} {}&{}&{}  \end{ytableau} \;\;\;
\begin{ytableau} {}&{} \\ {}  \end{ytableau} \;\;\;
\begin{ytableau} {}&{} \\ {} \end{ytableau} \;\;\;
\begin{ytableau} {} \\ {}&{}&{}  \end{ytableau} \;\;\;
\right)
\]
(To connect with the corresponding 
index set for braid representations 
one should think of two-coloured
compositions with each part having a different colour,
so that the colouring is forced and 
hence need not be explicitly recorded.)
\\
We will show explicitly \\ 
(I) how to construct a 
variety of representations from each such index; 
and \\ 
(II) that every charge-conserving representation arises this way.

\medskip

As discussed in detail 
in \S\ref{ss:lab},  
category $\LL$ is monoidally generated by 
two kinds of exchange of pairs of loops,
a non-braiding exchange denoted $s$ and 
a braiding exchange denoted $\sigma$. 
Thus we can give a solution,
a monoidal functor $F$, by giving the pair $(F(s), F(\sigma))$.

As alluded to above, the  
classification of braid representations {in \cite{MR1X}}
actually
progressed serendipitously 
from the aim of a systematic study of extensions of braid representations to loop braids
(in Damiani et al \cite{DMR}).
So in the present paper the original aim is realised.
\\
(To unpack this background a bit:
rather than {\em extension}, this can be seen as 
`merging' braid and symmetric group representations --- which raises the question of how to bring their separate universes 
(spaces on which they act)
together. 
Each has its own up-to-isomorphism freedom. So one idea is to rigidify one or both of them when bringing them together. The idea of rigidification on the braid side set up some choices and a direction of travel which, so far, ends with charge-conservation... which then turned out to facilitate a complete classification in this setting!)
\\
Given the route that led to braid-representation classification,
it is natural to make loop braids one of the
next structures to be studied using the charge-conserving machinery (and its broad underlying philosophy of paying active attention to the target category as well as the source).

A rough `route map' 
for the present paper
is provided by the stages of the braid representation classification in \cite{MR1X}.
In particular then, one would start with a physical realisation of loop braids, lifting the `Lizzy category' from \cite{MR1X}. 
For the sake of brevity 
(and given that the strong parallel is stretched by the absence of a 4d laboratory)
we 
have the option to 
jump this 
and pass to the next stage: a presentation
- see \S\ref{ss:story}. 
But see \S\ref{ss:lab} (supported by \S\ref{ss:AppA}) for a workable heuristic.
The target category is recalled in \S\ref{ss:Match},
where the further properties of Match categories that we
shall need 
(cf. \cite[\S3]{MR1X})
are obtained. 
In \S\ref{ss:N2s} (and \S\ref{ss:N3s})
we prove the key Lemmas determining
the form of solutions in low rank.
In \S\ref{ss:prep} we introduce the combinatorial 
structures that we shall need,
adapting those developed in \cite[\S5]{MR1X} in light 
of \S\ref{ss:N2s}.
And in \S\ref{ss:MainTheorem} we prove the 
classification Theorem. 

\subsection*{Recipe in brief}

We now {\it outline}
our combinatorial parameterisation of 
{isomorphism classes}
{(under the group of symmetries as in \cite{MR1X})}
of functors $\FF:\LL\rightarrow \Match^N$.

 {As mentioned above,} 
a functor $\FF:\LL\rightarrow \Match^N$ is determined by a pair of charge-conserving $N^2\times N^2$ matrices, {we denote these here by a pair $(R,S)$}.
{As with all charge conserving matrices,} $R$ and $S$ may be encoded as a sequence $R\leftrightarrow (a_1,a_2,\ldots,a_N,A(1,2),\ldots,A(N-1,N))$ where $a_i\in\C^*$ and the $A(i,j)$ are $2\times 2$ matrices, and similarly $S\leftrightarrow (b_1,\ldots,b_N,\ldots,B(i,j),\ldots)$.  
{(See \S\ref{ss:MatchN} for details.)}

{Let $J_N^{\pm}$ denote the set of signed multisets of two-part composition diagrams of total degree $N$.
For example the following multisets are in $J_3^{\pm}$, with diagrams before the comma having a $+$, and diagrams after a $-$. (Full details are in \S\ref{ss:J}.)
\[
(\one^3,), (\onetwo^1,), (\two^1,\one^1), (,\one^1 \two^1)
\]
Within each sign, we observe the following convention for ordering diagrams: first in ascending total size, secondly, for diagrams of equal total size, in ascending order of the second part of each composition.
Given an element of $J_N^{\pm}$, we refer to compositions as nations, labelling nations as $n_1,n_2,\ldots$, and within a nation $n_t$ we refer to each part of the composition as a county, labelling the top part of the composition diagram $s_{t,1}$ and the second part $s_{t,2}$.
}

{Now to an element $\lambda\in J_N^{\pm}$ we construct a pair $(R,S)$ as follows. First, label the boxes in $\lambda$ with the $\{1,\ldots, N\}$ in order with the first numbers going left to right in the first county of $n_1$, then the second county and so on with $n_2,n_3,\ldots$. }
The following is an example with $N=11$:

\beq \label{eq:lamex-antenatal}
\lambda \leadsto (\twoN{1}{2} \; \twoN{3}{4} \;
\oneoneN{5}{6} , 
\;\oneN{7} \; \oneN{8} \; \oneN{9} \; \twoNN{10}{11})
\eq

 Now, for each nation $n_s$ we assign parameters $\alpha_t$ to county $s_{t,1}$ and $\beta_t$ to county $s_{t,2}$ (if it is non-empty) such that $\alpha_t+\beta_t\neq 0$.  Next, for each pair of distinct nations $n_s,n_t$ we assign two non-zero parameters $\mu_{s,t},C_{s,t}$.  

Firstly, if $i$ resides in county $s_{t,1}$ then $a_i=\alpha_t$  whereas if $i$ resides in county $s_{t,2}$ then $a_i=\beta_t$.  If $\sgn(n_t)=+$ then $b_i=1$ (resp. $b_i=-1$) if $i$ resides in $s_{t,1}$ (resp. in $s_{t,2}$).  The sign of $b_i$ is opposite to this in case $\sgn(n_t)=-$.

Consider each pair of individuals $i<j$.
\begin{enumerate}
    \item 
If $i\in n_s$ and $j \in n_t$ with $s\neq t$ then $A(i,j)=\begin{pmatrix} 0 & \mu_{s,t}/C_{s,t}\\ \mu_{s,t}C_{s,t} &0\end{pmatrix}$, and $B(i,j)=\begin{pmatrix} 0 & 1\\
1& 0
\end{pmatrix}$.

\item 
If $i$ and $j$ are in the same nation $n_t$ but different counties $s_{t,x}$ and $s_{t,y}$ 

(note $x<y$ by construction),
then $A(i,j)=\begin{pmatrix} \alpha_t+\beta_t & \alpha_t\\ -\beta_t & 0\end{pmatrix}$ and $B(i,j)=\sgn{(n_t)}\begin{pmatrix} 0 & 1\\
1& 0
\end{pmatrix}$.
\item If $i$ and $j$ are both in the first, 
(respectively second), county 
 in $n_t$  
then
$A(i,j)=\sgn{(n_t)}\begin{pmatrix} \alpha_t &0\\0&\alpha_t\end{pmatrix} $,
(respectively
$A(i,j)=\begin{pmatrix} \beta_t &0\\0&\beta_t\end{pmatrix} $)
and $B(i,j)=\begin{pmatrix} 1 & 0\\
0& 1
\end{pmatrix}$.
\end{enumerate}

Our results imply that 
\begin{enumerate}
    \item This construction of $(R,S)$ does provide a functor $\FF:\LL\rightarrow\Match^N$ and
    \item For any such functor, the corresponding pair $(R,S)$ may be transformed into an equivalent pair $(R^\prime,S^\prime)$ of the above form by means of two basic symmetries: simultaneous local basis permutations and/or simultaneous conjugation by a diagonal matrix $X$.
\end{enumerate}

In \cite{KMRW} we used the nomenclature, adapted from \cite{AS}, \emph{loop braided vector spaces} (LBVSs) for a general triple $(V,R,S)$ where $V$ is an $N$-dimensional vector space and $(R,S)$ defines a functor $\FF:\LL\rightarrow \Mat^N$.  Our main result can then be phrased as a classification of charge conserving LBVSs.

\medskip

\noindent {\bf Acknowledgements}.
We thank Emmanuel Wagner, Celeste Damiani and Alex Bullivant
for useful conversations.
PPM thanks EPSRC for support under Programme Grant 
EP/W007509/1.
PPM thanks Paula Martin 
and Joao Faria Martins
for useful 
conversations, and Leonid Bogachev for a 
reference on asymptotics of plane partitions. The research of ECR was partially supported USA NSF grants DMS-1664359, DMS-2000331 and DMS-2205962.
FT thanks support from the EPSRC under grant EP/S017216/1.

\section{Some basics of loop braids} \label{ss:lab}

Mac~Lane's monoidal braid category $\Bcat$  
\cite[Sec.XI.4]{MaclaneBook}
has object
monoid generated by a single object - a single strand of
hair, or a single point from which this hair is extruded.  
The category is then generated by an 
elementary braid in $\Bcat(2,2)$ and its inverse.
The monoidal category $\LL$ 
has object monoid generated by a single circle or loop;
and the category is generated by {\em two} `exchanges'
in $\LL(2,2)$. 
In \cite{MR1X} we used the `Lizzy category' to give 
a geometric framework for $\Bcat$. 
This section aims merely to visualise the two generators of
$\LL$ in an analogous way.%
\footnote{As opposed to the hybrid combinatorial visualisations of
\cite{BCW} borrowed from virtual braids, for example.}
(As such the section can  optionally be skipped. For representation
theory we can rely on the presentation given in 
\S\ref{ss:story}.) 

\medskip

For each $n \in \N$,
let $C_n$ be a configuration  of $n$ 
unlinked oriented 
circles  
in a box in $\R^3$.
{We will fix $C_n$ so the $i$-th loop is a circle of small radius in the 
$xy$-plane centred at $(i,0,0)$.
(Up to isomorphism it will not matter precisely
which configuration we take for $C_n$.) 
}
The loop braid group $LB_n$ is a `motion group' 
for $C_n$. 
See for example Dahm \cite{Dahm}, Goldsmith \cite{Goldsmith}, 
Lin \cite{Lin}, Fenn--Rimany-Rourke \cite{FRR},
Baez-Crans-Wise \cite{BCW}, Brendle-Hatcher \cite{BH}, Damiani \cite{Damiani} and references therein. 
Or see Appendix~\ref{ss:AppA}. 

In the spirit of 
the braid category 
$\Bcat$
the groups $LB_n$ form a natural diagonal subcategory of 
a 
motion groupoid
(informally speaking, 
$\mathrm{Mot}_{\R^3}$ as in \cite{TMM}, except we should stress
compact support --- 
in \cite{TMM} it is proved that a loop braid group is a motion group in a box $B^3$ with fixed boundary).
The monoidal structure is indicated by placing one row of circles following another, to make a longer row:
$C_n \sqcup C_m = C_{n+m}$.
\soutx{ 
\ft{We must be careful here, in \cite{TMM} we prove Damiani loop braid group is motion group for $B^3$ with fixed boundary.}
\ppm{[-I know this is only a comment so far, but it 
would be useful if it was stated more precisely.]}
\ppm{[I would also have the para containing it a bit lower in exposition.]}
}

\medskip 

The braid category $\Bcat$ is the diagonal groupoid whose groups of morphisms are the ordinary braid groups. 
In this context 
the braid groups have various relevant realisations
(for representation theory it is convenient to work with efficient presentations, but for intuition
and application geometric realisations are more useful). 
In \cite{MR1X} a realisation as hair-braiding is used. 
In this case   
one may consider a square 
(or other topological disk)
that is a fixed-height section through the hair, thus cutting the hair at $n$ points in the square. 
An initial configuration, $P_n$ say, places 
the $n$ points at regular intervals in the square.
In our loop-braid case the square 
section through a 3d braid-laboratory 
is replaced by a box
(it would be a section through a 4d loop-braid laboratory), 
and the
points by circles, as in Fig.\ref{fig:loopnnD3a}.

\begin{figure}
    \centering
  \includegraphics[width=8cm]{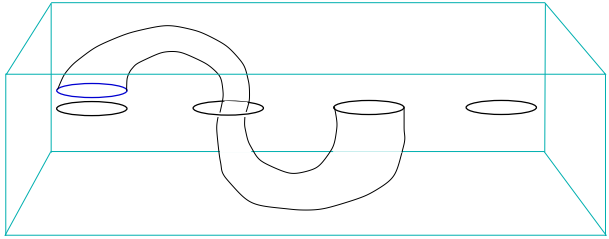}
    \caption{Schematic overlay visualisation of a loop motion.}
    \label{fig:my_label666}
\end{figure}
\begin{figure}
    \centering
  \includegraphics[width=8cm]{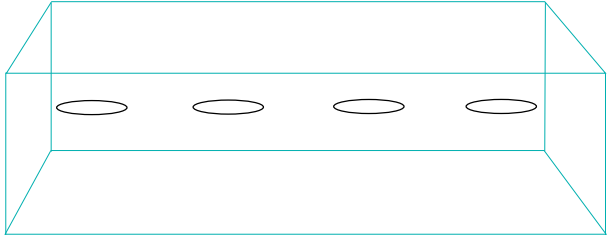}
    \caption{Initial configuration of loops in a box
    ($C_n$ with $n=4$).
    Also this is the overlay visualisation of a 
    static motion, representative of the identity in $LB_4$.
    \label{fig:loopnnD3a}}
\end{figure}

\begin{figure}
    \centering
    \includegraphics[width=8cm]{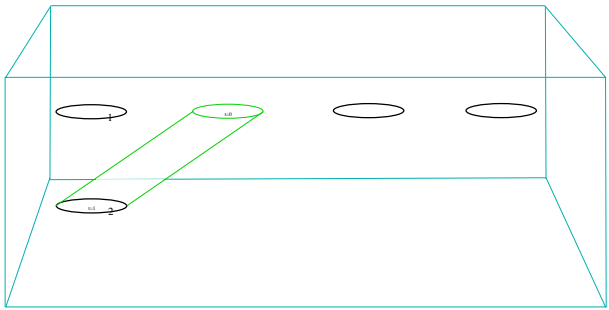}
    \caption{Overlay view of simple non-diagonal motion, moving loop-2 under loop-1. 
    }
    \label{fig:79-0}
\end{figure}

In the braid case one can either 
think of the braiding as taking place over time 
(particles move in the square and world-lines braid), or acting 
by physical continuity 
on the hair above the point of action - between this point and the fixed scalp as it were. 

In the former perception, we can visualise the braiding by showing all the locations visited by each 
point - still just drawn in the square.
An `overlay' visualisation. 
This has the drawback that the same location can be visited at different times
(indeed a simple static point is an extreme form of this), 
but the merit
is that the visualisation remains in 2d.
The drawback can be minimised by only drawing 
`simple'  
changes 
\footnote{Here we will leave the 
simple-change 
notion entirely informal and example-based. 
Note that the separation between two `particles' in some intermediate moment can be small, so  simpleness 
is not necessarily enough to render distinct paths
from $a$ to $b$ homotopic
in the relevant space.}
in a single instance \cite{MMT}.  
For this
it is convenient momentarily to break out of the diagonal groupoid 
with object set $\{ P_n \}_n$
and into the more general,
in the sense of considering partial braidings, 
passing to configurations different from $P_n$.
Collectively the braidings and partial braidings are 
`motions'.
(Depending on the realisation, a motion may be an object set trajectory, or a path in a space of homeomorphisms of the ambient space that restricts to this trajectory. Here we focus on describing the object set trajectory.)
The morphisms in the category  (the elements in the
groups)   are certain equivalence classes of motions
- see below.

\medskip 

In the loop braid case we thus have 
overlay visualisations in 3d.
We can further use artists-impression to indicate 
these 3d objects on the 2d page, finally giving representations 
like those in 
Fig.\ref{fig:79-0} and 
Fig.\ref{fig:79alt}. 
Here the colour-code is green for the initial and
intermediate points, and black for the final points.
Note in this visualisation that the `world-line' of a static point is simply a point.

Two 
`motions'
between the same initial and final configurations
(configurations $a$ and $b$, say)
are equivalent if one can be 
homotopically  
deformed into the other in the box.
This is a process that is not made 
easy to track by the overlay picture! 
(Cf. Fig.\ref{fig:79alt}.)
But this will not
be an issue for us here.
Note that the specific figure illustrates one analogue here of the 
triangle equivalence of polygonal knots.
By another such analogue, the image of a circle at some
intermediate point in a motion need not be a simple 
translate of the circle.
Examples of this follow shortly.
(However see  
(\ref{fact:lin}).)

\begin{figure}
    \centering
    \includegraphics[width=6.8cm]{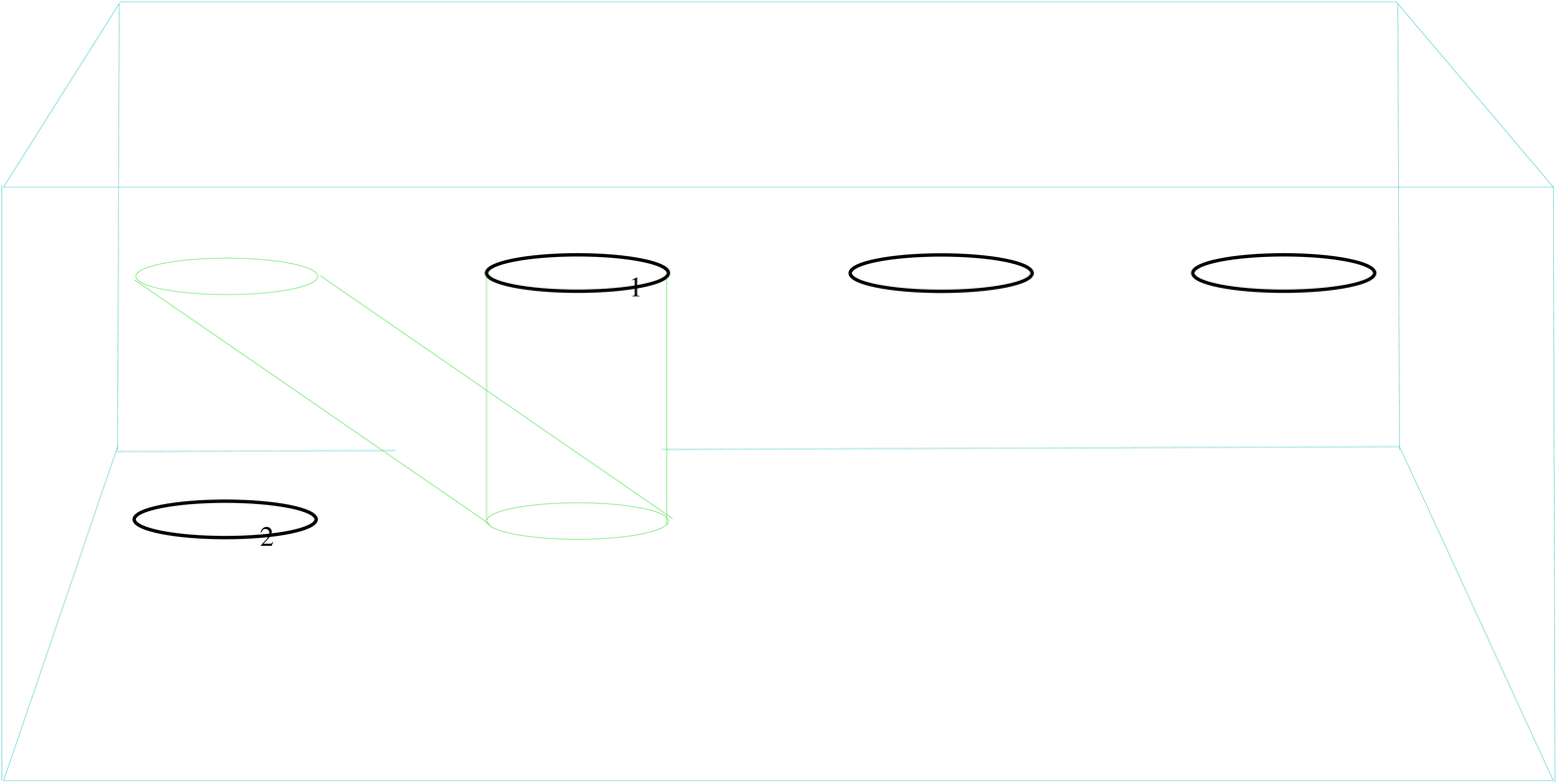}
    \hspace{.31in} 
    \includegraphics[width=6.8cm]{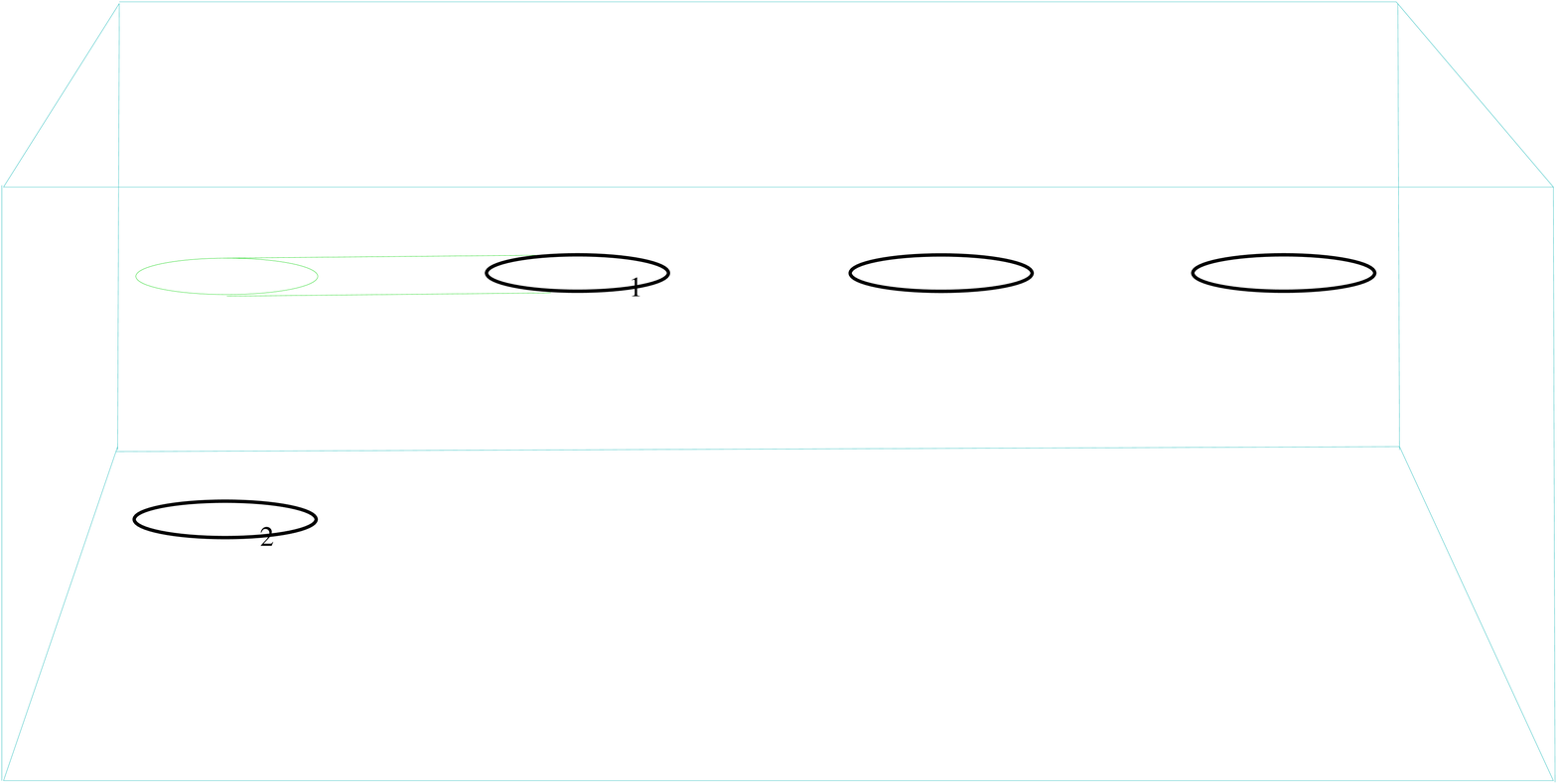}
    \caption{Equivalent motions, moving 
     loop-1 over, indirectly or directly. 
    }
    \label{fig:79alt}
\end{figure}

Consider also the 
the sequence in Fig.\ref{fig:79}. This sequence is 
a complete one, in the sense that we finally return, set-wise, to the starting configuration. 
Similarly Fig.\ref{fig:79xx}.
Note in general that two motions are composable if the tail of one is the source of the next.
A visualisation of the composition would amount to overlaying and replacing all now-intermediate black to green (but we will not need this).

\begin{figure}
    \centering
    \includegraphics[width=8cm]{xfig/loopnn2a12BOXgB.eps}
    \includegraphics[width=8cm]{xfig/loopnn2bc21BOXgBs.eps}
    \includegraphics[width=8cm]{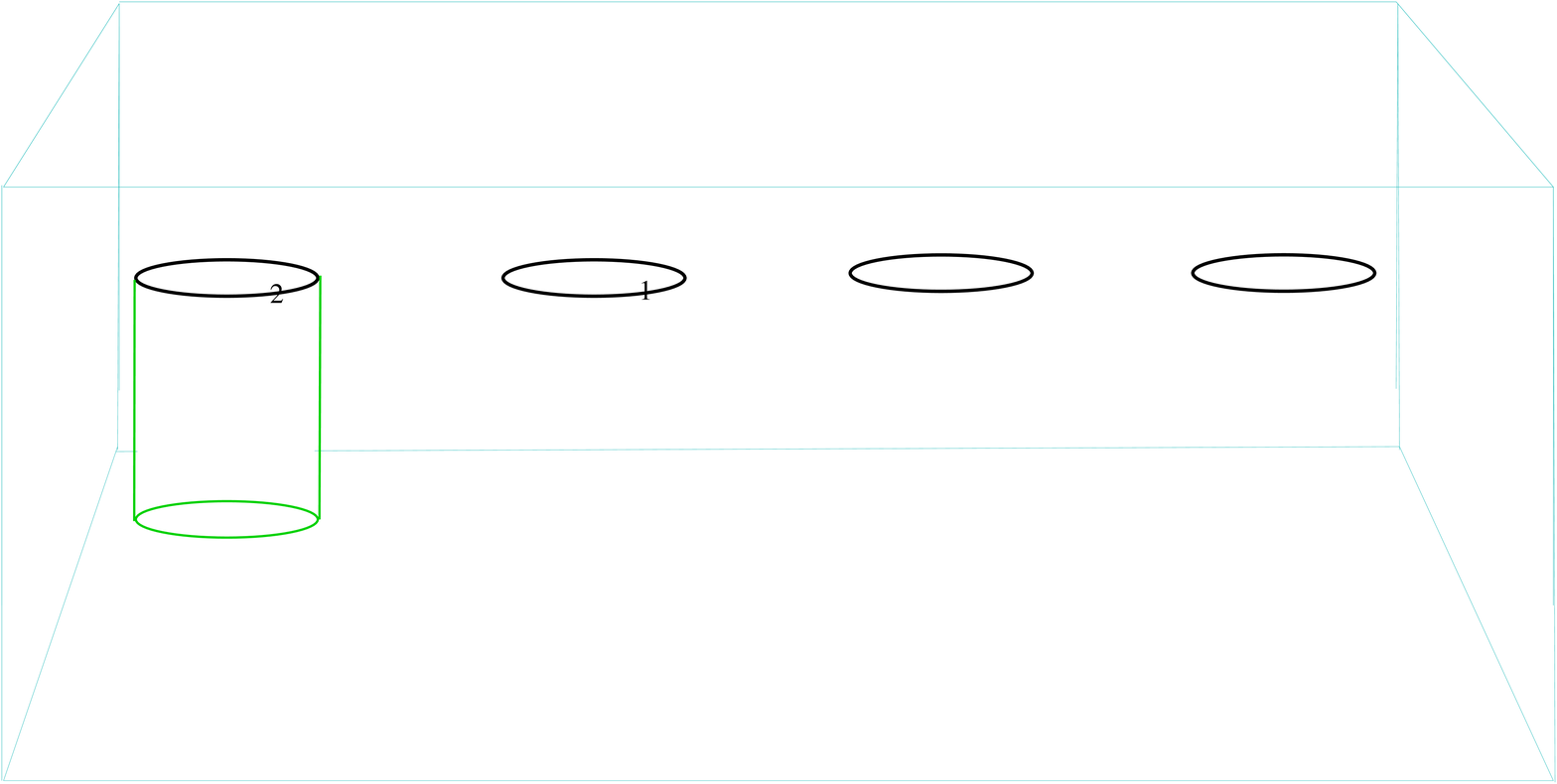}
    \caption{Sequence exchanging loop-1 and loop-2.
    The sequence runs top to bottom: 
    (a) Move loop-2 down. 
    (b) Move loop-1 over. 
    (c) Move loop-2 up.}
    \label{fig:79}
\end{figure}

\begin{figure}
    \centering
    \includegraphics[width=8cm]{xfig/loopnn2a12BOXgB.eps}
    \includegraphics[width=8cm]{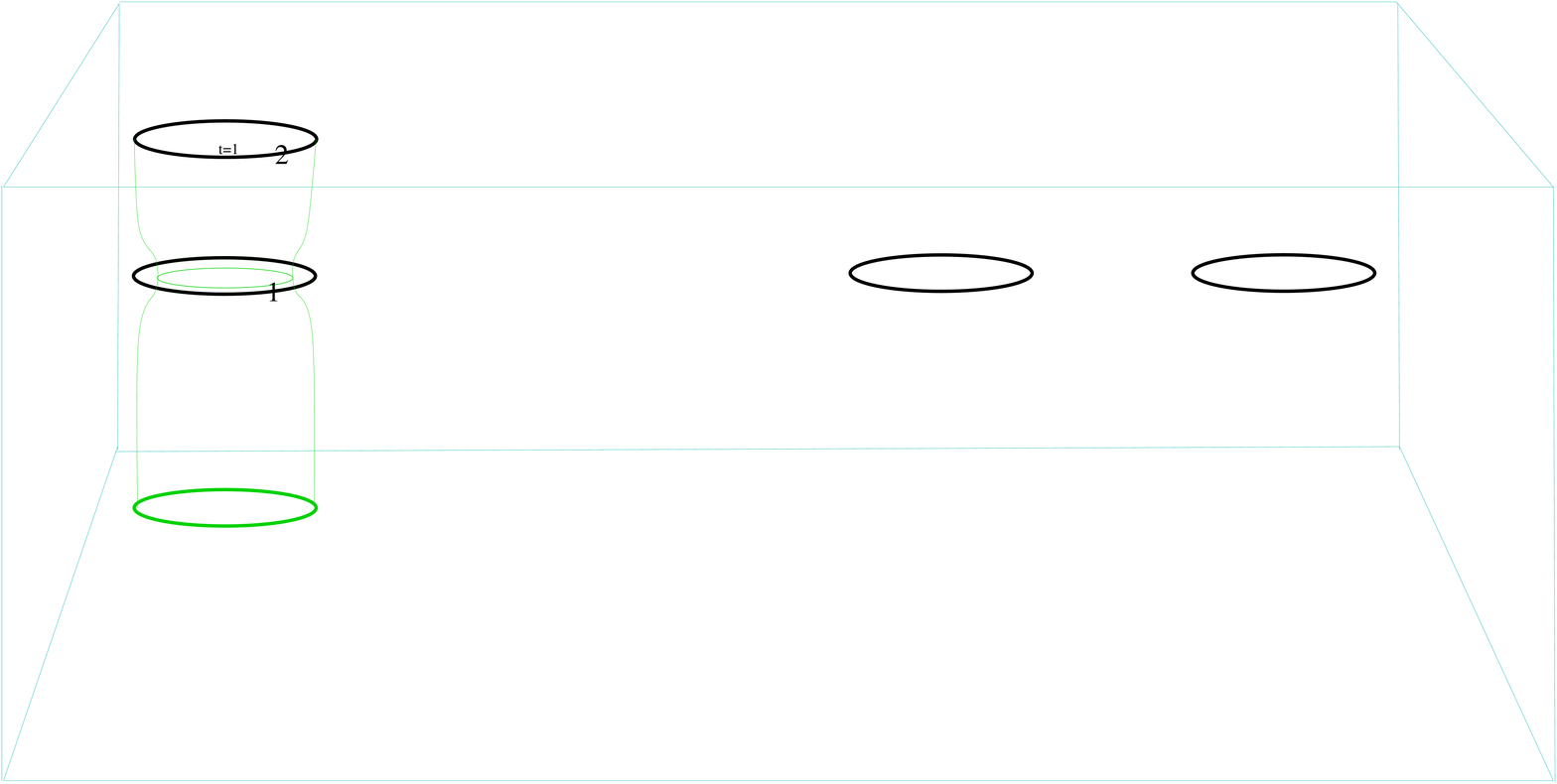}
    \includegraphics[width=8cm]{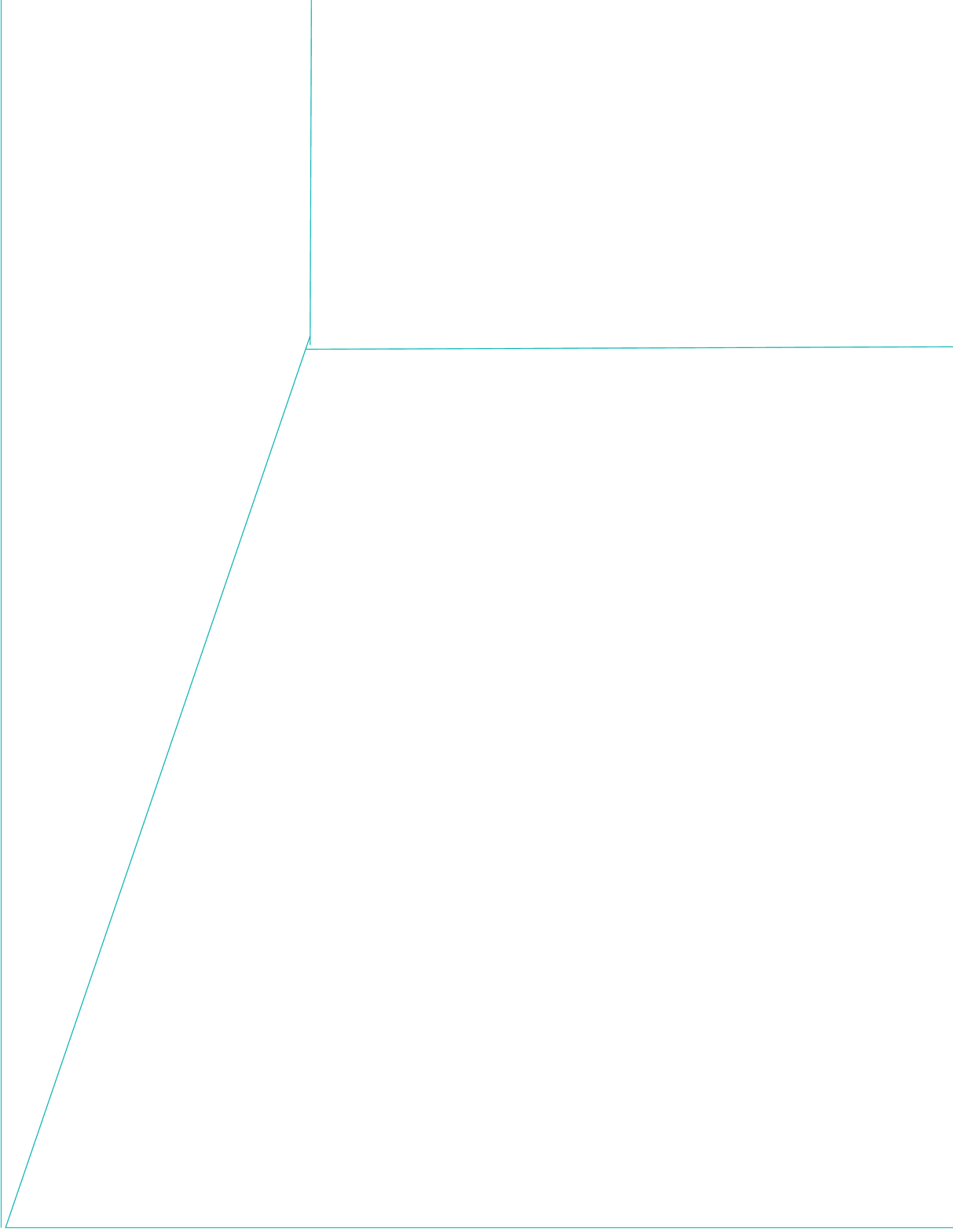}
    \includegraphics[width=8cm]{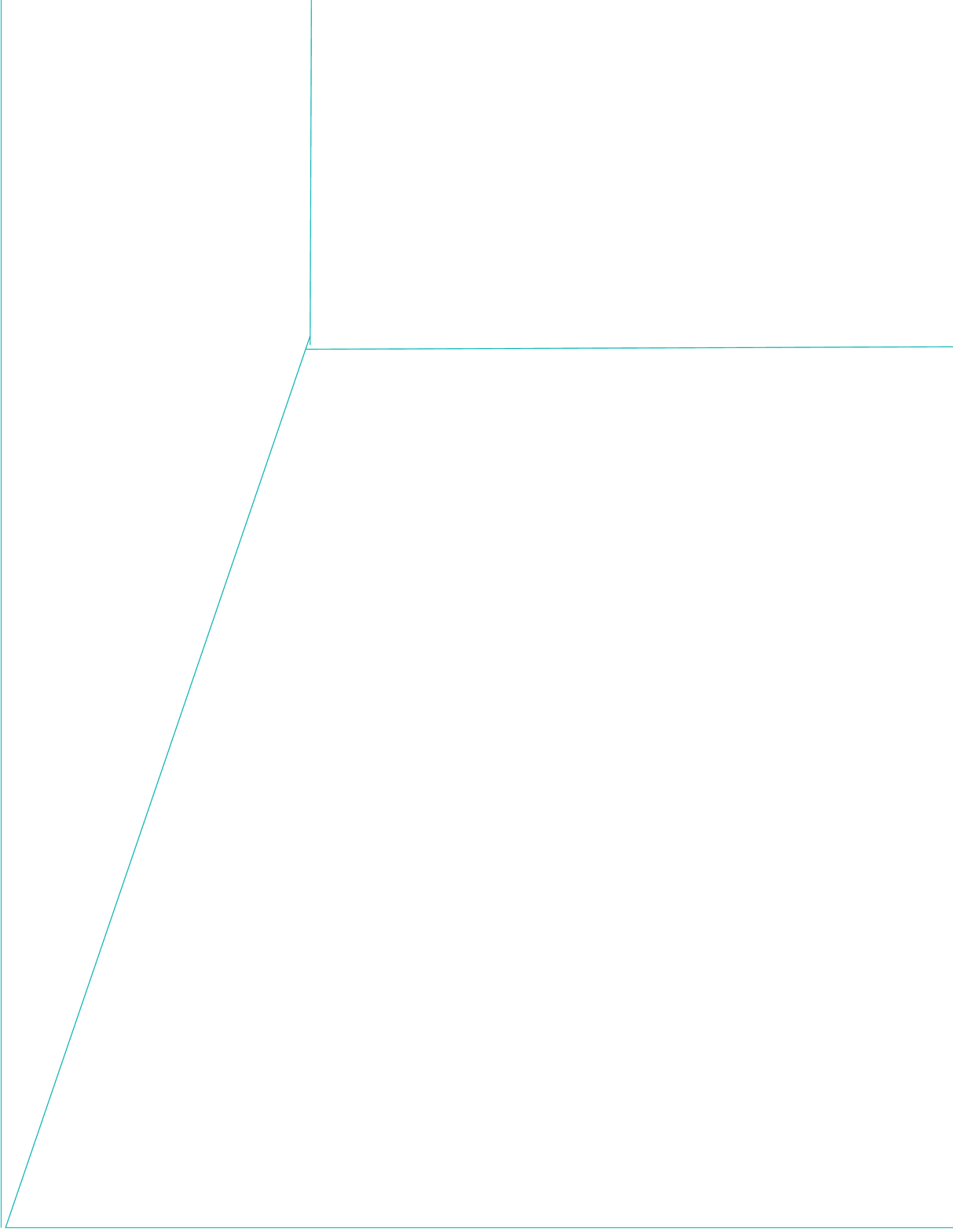}
    \caption{Sequence exchanging loop-1 and loop-2: 
    (a) Move loop-2 under loop-1. 
    (b) Move loop-2 up through loop-1. 
    (c) Move loop-1 over.
    (d) Move loop-2 down. }
    \label{fig:79xx}
\end{figure}

\mdef  \label{fact:lin}
{It is a useful fact that for each loop $i$ 
every 
equivalence class of 
motions contains a representative in which loop 
$i$ is at most translated during the motion (i.e.
circularity, attitude and size are preserved).
N.B. this is certainly not true for any {\em pair} of
loops.
}

\medskip

\begin{figure}
    \centering
    \includegraphics[width=3.4cm]{xfig/loopnnD3a.eps}
  $  \otimes \;$
    \includegraphics[width=3.4cm]{xfig/loopnnD3a.eps}
    \; =
    \raisebox{-.251in}{
    \includegraphics[width=6cm]{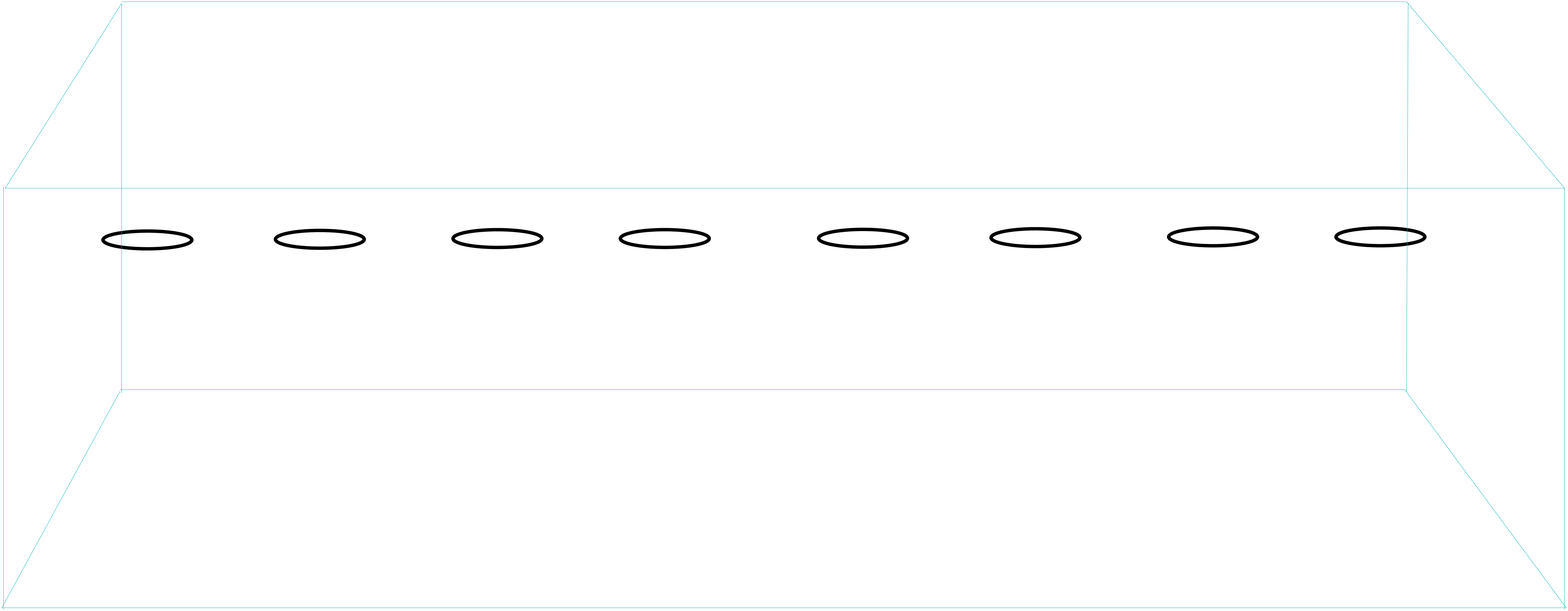}}
    \caption{Schematic for monoidal composition.}
    \label{fig:monoidal}
\end{figure}

\newcommand{\vars}{\varrho}  

\mdef \label{th:Lpres}
Here the loop braid category $\LL$ is the 
`category of loop braid motions'
(strictly speaking a subcategory of the category
of all loop motions), as follows.
\\ 
Let $\varsigma \in \LL(2,2)$ denote the class of
motions  in which two loops are exchanged
by one `passing through' another,
as in Fig.\ref{fig:79xx} (ignoring the extra two loops).\\ 
Let $\vars \in \LL(2,2)$ denote the class where the two loops 
pass around each other, as in Fig.\ref{fig:79}.
\\
Then $\LL$ is the category generated by these motions
and inverses.
(This does not include  motions in which a single
loop undergoes an orientation-reversing flip.)
The monoidal composition is as indicated in 
Fig.\ref{fig:monoidal}. 

We will not need to dwell on this 
beautiful but technical construction 
in order to do representation theory, because we have
the   following 
isomorphism (\ref{pa:iso}).
Indeed the main reason for recalling aspects of the
construction of $\LL$ here is to grasp why the 
presentation 
given by (\ref{pa:iso})
takes the form that it does.

\medskip 

\section{The setup, and the story so far} \label{ss:story}

We study \emph{natural} monoidal functors $\FF:\LL\rightarrow\Match^N$, that is monoidal functors such that $\FF(1)=1$.
A strict natural monoidal functor 
$\FF:\LL\rightarrow\Match^N$
is called $N$-{\emph{charge-conserving}}.
Next we explain how to work with $\LL$; explain key properties of $\Match^N$; and hence explain how to give a functor $\FF$. 

\medskip 

A presentation for the loop braid category 
as a strict monoidal category
may be given as follows.

\mdef  \label{de:L'}
The category $\LLL$ is the strict monoidal 
(diagonal, groupoid)  
category with object monoid the natural numbers,
and two generating morphisms (and inverses) 
both in $\LL'(2,2)$,  
call them $\sigma$ and $s$,
obeying 
\beq  \label{eq:s21} 
s^2 = 1 \otimes 1   
\eq 
where (as a morphism)
 1 denotes the unit morphism in rank one;
\soutx{$1_i = 1\otimes 1\otimes ...\otimes 1$ 
denotes the unit morphism in rank $i$}
\begin{equation}  \label{eq:sss}
s_1 s_2 s_1 = s_2 s_1 s_2
\end{equation} 
where $s_1 = s \otimes 1$ and $s_2=1\otimes s$,
\begin{equation} \label{eq:sigmass}
\mathrm{(I)} \; \sigma_1 \sigma_2 \sigma_1 = \sigma_2 \sigma_1 \sigma_2 
, \hspace{1cm}
\mathrm{(II)} \; \sigma_1 \sigma_2 s_1 = s_2 \sigma_1 \sigma_2
, \hspace{1cm} 
\mathrm{(III)} \; \sigma_1 s_2 s_1 = s_2 s_1 \sigma_2  .
\end{equation} 
Note that because of (\ref{eq:s21}), 
relation (\ref{eq:sigmass})(III) is equivalent to 
$s_1s_2\sigma_1=\sigma_2s_1s_2$.
On the other hand the `reverse' of (\ref{eq:sigmass})(II)
is not imposed.

\medskip 

\mdef   \label{pa:iso}
The map on generators of $\LL'$ to $\LL$
given by 
$\sigma \mapsto \varsigma$ and $s \mapsto \vars$
(recall  (\ref{th:Lpres}))
extends to an isomorphism. 
(For the $\sigma/\varsigma$ motion it makes a difference which
loop is which, 
leading to the asymmetry of (\ref{eq:sigmass})(II),
so later we will be careful with conventions.)

\mdef
By 
(\ref{pa:iso})
we may simply consider the representation theory of $\LL'$. 
Or equivalently, we have a representation of $\LL$ 
provided that the images of $\varsigma$ and $\vars$ 
obey identities corresponding to the presentation of 
$\LL'$ above. 
In practice we will simply identify the two categories henceforward. 

\medskip

Just as a strict monoidal functor 
$F: \Bcat \rightarrow \Match^N$ 
(or to any target)
can be given by
giving the image of the elementary braid 
$\sigma \in \Bcat(2,2)$, so a functor 
$F: \LL \rightarrow \Match^N$ can be given by
giving the pair 
\beq  \label{eq:Fss}
F_* =(F(s),F(\sigma)) 
\eq 
(here $*$ is some label for the representation).
We discuss this in more detail in \ref{ss:Appg+r}.
The
pairs 
that follow in \S\ref{ss:N2s} {\it et seq} 
thus give functors in this way.

\medskip

\mdef \label{pa:scheme}
An initial organisational scheme 
for solutions 
is provided by the classification on restriction to $\Bcat$, as in \cite{MR1X}, 
so we recall this briefly in \S\ref{ss:Brecall}.

The foundational aspect of this is the $\Match^N$ category itself, which we recall briefly   
in \S\ref{ss:MatchN}.

\newcommand{\aalph}{\ul{\alpha}}

\subsection{The story so far: \texorpdfstring{$\Match^N$}{MatchN} categories}  \label{ss:MatchN}

Here $\Mat$ is the monoidal category of matrices over
a given commutative ring $k$. We take $k=\C$.
(See (\ref{de:KP}) below for monoidal product conventions.)
For $N \in \N$ let $\ul{N} := \{ 1,2,...,N\}$.

A {\em natural} monoidal category 
is a strict monoidal category with
object monoid freely generated by a single object.
Recall from \cite{MR1X} that $\Mat^N$ denotes 
the natural monoidal subcategory of $\Mat$ generated 
(in the obvious sense)
by the object $N$ in $\Mat$ (renamed object 1 in $\Mat^N$, with $1\otimes 1=1+1$).

\mdef \label{de:cc}
The index set for rows of a matrix in $\Mat^N(n,m)$
is the set of words in $\underline{N}^n$, and similarly
for columns.
We sometimes write $|ijk\rangle$ to emphasise that the
word $ijk$ is being used as a column/row index.

A matrix $M\in\Mat^N(n,n)$ is {\em charge conserving}
if 
$M_{w,w'} = \langle w | M | w' \rangle \neq 0$ 
implies that 
$w$ is a perm of $w'$. 
That is $w = \sigma w'$ for some $\sigma\in\Sigma_n$,
where symmetric group 
$\Sigma_n$ acts by place permutation.

\mdef  \label{de:Match}
The subset of $\Mat^N$ of charge conserving (cc) 
matrices
forms a monoidal subcategory (see for example 
\cite[Lem.{3.7I}]{MR1X}) denoted $\Match^N$. 

\newcommand{\aada}{\smat a_1 \\ &a&b \\ &c&d \\ &&& a_2 \stam }

\mdef \mulem \cite[Lem.3.7III]{MR1X} \label{lem:37III} 
$\,$ 
For each $M\leq N$ and each injective function 
$\psi: \ul{M}\rightarrow\ul{N}$ there is a monoidal
functor  $f^\psi : \Match^N \rightarrow \Match^M$ given
on morphisms by $f^\psi (a)_{vw} = a_{\psi v,\psi w}$
($a\in\Match^N(m,m)$ say). 
\\
In particular we have a group action of the symmetric group $\Sym_N$ on $\Match^N$.
\qed

\smallskip 

\newcommand{\sigmax}{\omega} 

\noindent 
{\em Example}. Consider the nontrivial bijection
$\sigmax:\ul{2}\rightarrow\ul{2}$. On $\Match^2(2,2)$
this gives 
\[
\smat a_1 \\ &a&b \\ &c&d \\ &&& a_2 \stam 
\;\stackrel{f^\sigmax}{\mapsto}  \;
\smat 0&1\\1&0\stam \otimes \smat 0&1\\1&0\stam
\smat a_1 \\ &a&b \\ &c&d \\ &&& a_2 \stam
\smat 0&1\\1&0\stam \otimes \smat 0&1\\1&0\stam
\hspace{1.24in} \]
\beq \label{eq:fsig}  \hspace{.54in} 
=\smat 0&0&0&1\\0&0&1&0\\0&1&0&0\\1&0&0&0\stam
\smat a_1 \\ &a&b \\ &c&d \\ &&& a_2 \stam
\smat 0&0&0&1\\0&0&1&0\\0&1&0&0\\1&0&0&0\stam
=\smat a_2 \\ &d&c \\ &b&a \\ &&& a_1 \stam
\eq

\mdef 
Let $C$ be a natural category, as in \cite{MR1X}. 
For $\varsigma \in C(2,2)$ 
its image under a functor 
$F: C \rightarrow \Match^N$ 
with $F(1)=1$ will be some 
$R \in \Match^N(2,2)$.
Let $K_N$ denote the (simplicially directed) 
complete graph, as in \cite{MR1X}.
The (possibly) nonzero entries in $R$ are in 
correspondence with assignments of a scalar 
$a_i = a_i(R)$ to each vertex $i$ of $K_N$ and a 
matrix 
\[
\mat a_{ij} & b_{ij} \\ c_{ij} & d_{ij} \tam 
=
\mat a_{ij}(R) & b_{ij}(R) \\ c_{ij}(R) & d_{ij}(R) \tam 
\]
to each directed edge.
Altogether for $N=2$ we have
\beq \label{eq:Rabcd}
R = \mat 
a_1(R) \\ & a_{12}(R) & b_{12}(R) 
     \\ &c_{12}(R)&d_{12}(R)    \\ &&& a_2(R) 
\tam 
\eq

\mdef \label{par:KGraph}
Recall from (\ref{eq:Fss}) that to give a functor $\FF:\LL\rightarrow\Match^N$ 
it is enough to give the images $\FF(s)$ and $\FF(\sigma)$.
Here we assume the object map $\FF(1)=1$, so
$\FF(s)$ and $\FF(\sigma)$
$\in\Match^N(2,2)$.

For $N=2$ giving $M\in\Match^2(2,2)$ is easy to do explicitly, as in (\ref{eq:fsig})
or (\ref{eq:Rabcd}),
but for general $N$ it can be helpful to view 
elements of $\Match^N(2,2)$ 
geometrically,  
using
the complete graph $K_N$, as in \cite{MR1X}.
The point is that the non-zero elements of 
$M \in \Match^N(2,2)$ break up into $1\times 1$ and
$2\times 2$ blocks, with the former 
$a_i = \langle ii |M|ii \rangle$
indexed by 
vertices $i$ and the latter 
$A(i,j)$
(the submatrix 
$\smat a_{ij} & b_{ij}\\c_{ij}&d_{ij}\stam$
associated to $|ij\rangle$ and $|ji\rangle$)
by edges $(i,j)$ of $K_N$.

Alternatively we may encode a fixed $M$ as a list of 
the scalars followed by a list of the matrices
{in a suitable order}.
For our pair of matrices 
$R=\FF(\sigma),S=\FF(s)$, thus giving a functor, 
we will use:
\beq \label{eq:alphaform} 
\aalph(R) = 
(a_1, a_2, ...,  a_N, 
 A(1,2), A(1,3), A(2,3),...,A(N-1,N))
\eq 
\[
\aalph(S) = (b_1, b_2, ..., b_N, 
  B(1,2), B(1,3), B(2,3),...,B(N-1,N))
\]
That is, to each edge $(i,j)$ of $K_N$ we will associate 
two matrices, giving
$A(i,j)$ and $B(i,j)$, and so on, giving $\FF$ this way.

\subsection{The story so far: classification of braid representations} \label{ss:Brecall}

The classification of braid representations we need
can be given 
in rank $N$ indexing 
either by the set $\SSS_N$ of two-coloured multitableaux; 
or, working up to the $\Sym_N$ symmetry,
by the set $\TTT_N$ of multisets of 
two-coloured compositions, of total degree $N$ \cite{MR1X}.

In rank $N=2$ for example these multisets 
may  
be written 
\[ 
\two, \qquad \oneone, \qquad 
\oneonex,  \qquad 
\square\; \square 
\]
For each index $\lambda$ 
there is an
orbit of varieties of solutions.
A variety is obtained 
(complete with named  variables)
by numbering the boxes of $\lambda$ up to order within rows.
(We will recall them explicitly shortly.)
The size of this variety depends on whether one wants to give representations up to `gauge' isomorphism 
(i.e. a set of representations that are a transversal of isomorphism classes) or {\em all} representations. 
The former is natural for $\Bcat$ itself, but since 
extension to $\LL$ will restrict the symmetry of isomorphism it is
natural here to consider the latter.

We will retain the convenient language of \cite{MR1X}.
Thus an individual composition in a multiset is a {\em nation};
and a part in a composition is a {\em county}.

The rank $N=2$ and $N=3$
cases will be particularly important for us. 
In rank $N=2$ the six varieties have convenient 
special names. 
Case $\two$ 
--- a single county in a single nation ---
gives rise to the trivial or 0-variety of solutions.
Case $\one\;\one$ 
is the /-variety. 
Case $\oneonex$ 
is the $\aaa$-variety and 
(depending on box numbering) $\ul{\aaa}$-variety.
Case $\oneone$ yields the $\fff$-variety
and $\ul{\fff}$-variety.

\newcommand{\kkk}{\chi} 
\newcommand{\gammad}{\gamma} 

\mdef \label{par:rank2B}
We now recall explicitly 
{the classification for $N=2$ of} 
all $R\in \Match^N(2,2)$ which satisfy the relation (\ref{eq:sigmass} II),
{i.e. the complete set from \cite[Prop.3.21]{MR1X} of cc braid representations in rank $N=2$.}
We have six families of solutions given here. In each of the following families all variables must be non-zero.
Organisationally we also insist that in the last four cases $\alpha+\beta\neq 0 $ since 
the subset $ \alpha+\beta=0$   
is included in the $F_/$ family of solutions; 
and similarly in $F_f$ and $F_{\underline{f}}$ cases $\alpha\neq\beta$ as this is included in the cases $F_\aaa$ and $F_{\underline{\aaa}}$ respectively.
\begin{align}
F_0&=\begin{pmatrix}
\alpha &&& \\ &\alpha \\ &&\alpha \\ &&&\alpha
\end{pmatrix}, 
&&F_{/}=
\begin{pmatrix} 
\alpha &&& \\ && \gammad \kkk \\ & 
\frac{\gammad^{}}{\kkk}\\ &&&\beta 
\end{pmatrix}, 
&&&F_{\fii}=\begin{pmatrix} 
\alpha &&& \\ &\alpha +\beta &\kkk \\ &-\frac{\alpha \beta}{\kkk} \\ &&&\alpha
\end{pmatrix} 
\nonumber \\ \label{eq:ccBN2}
F_{\aaa}&=\begin{pmatrix} 
\alpha &&& \\ &\alpha +\beta &\kkk \\ &-\frac{\alpha \beta}{\kkk} \\ &&&\beta
\end{pmatrix}, 
&&F_{\underline{\fii}}=\begin{pmatrix} 
\alpha &&& \\ & &\kkk \\ &-\frac{\alpha \beta}{\kkk} & \alpha +\beta \\ &&&\alpha
\end{pmatrix}, 
&&&F_{\underline{\aaa}}=\begin{pmatrix} 
\alpha &&& \\ & &\kkk \\ &-\frac{\alpha \beta}{\kkk} & \alpha +\beta \\ &&&\beta
\end{pmatrix}
\end{align}
The parameter $\kkk$, where it appears, is here 
`unphysical', i.e. it can be changed by X-symmetry {(discussed in \S\ref{ss:0X})}
and does not affect the spectrum.
In contrast for $F_{/}$ say, $\alpha,\beta,\gammad$ all
affect the spectrum.
To match the `gauge choice' 
made in \cite{MR1X} we should set
$\kkk = \gammad$ in $F_{/}$ 
(and $\gammad$ here is $\mu$ there);
and $\kkk = \beta $ in other cases.

The aim of Section~\ref{ss:N2s} then is to 
establish 
which of the above braid solutions extend to loop braid solutions. Precisely, given an $F(\sigma)$ of one of the above types, when does there exist an $F(s)$ such that all \eqref{eq:s21}, \eqref{eq:sss}, and \eqref{eq:sigmass} are satisfied?

\mdef In case $N=3$ we have 
(from \cite[Prop.5.1]{MR1X}) the braid multisets
\[
\one^3 \qquad \one\; \two \qquad 
\one \oneone \qquad \one \oneonex 
\qquad \three \qquad \twoone \qquad \oneoneone 
\qquad \twoonex \qquad \oneoneonex \qquad 
\oneonexone 
\]
corresponding to the braid solutions as 
recalled in (\ref{pr:9rule}).
There are also 
$\onetwo$, $\onetwox$, and $\oneonexonex$, treated in \cite{MR1X} by using the additional flip symmetry.

The main point in extending the
$N=3$ braid solutions to loop braid 
is going to be to
note from Lemma~\ref{lem:anof} 
that extension at each edge 
is, up to sign, either by an identity matrix 
(extending type 0) --- so that the vertex eigenvalues
at each end are forced the same;
or by a signed 
permutation/$\PP$ 
matrix where 
the vertex signs are either forced different 
(type $\aaa$) or nominally free (type /);
with no extension for type $\fff$. 
We {will thus} see \soutx{immediately} the following rules: 
\\
1. In the same county vertices must have the same sign;
\\
2. In different counties in the same nation the signs
are different (hence there are at most two counties 
in a nation);
\\
And a `non-rule': Vertex signs are not constrained between different nations.

In \S\ref{ss:N2s} we obtain the $N=2$ Lemma.
In \S\ref{ss:prep} we will then implement the rules to 
obtain all solutions in all ranks.

\section{Preparations for calculus in Match categories} \label{ss:Match}

Both to prove the main Theorem, and the key Lemma, we will need to be able to compute in Match categories.
Here we develop the required machinery.

\mdef Let $\MM$ be a monoidal presentation for a 
strict monoidal,
indeed natural,
category $C$. 
We say a relation has width $n$ if it is in $hom_C(n,n)$.
For example the relation (\ref{eq:sigmass}(II)) above has width 3.
We say the presentation $\MM$ has width $w$ if $w$ is the
maximum width among relations in $\MM$. 
For example the presentation in 
(\ref{eq:s21}-\ref{eq:sigmass}) has width 3. 

\mdef
Let $C$ be presented by $\MM$.
We may give a monoidal functor $\F:C\rightarrow D$
by giving the images of the generators.
A function $F$ from generators to $D$ gives a functor 
$\F$ provided that the images of the relations hold.
The image of each relation $a=b$ say can be checked
by checking $F(A)=0$ where 
the `anomaly'
$F(A)=F(a)-F(b)
\in D(w,w)$ (we assume $D$ linear).  

As noted in (\ref{de:cc}), a basis element of the space acted on by
$\Match^N(w,w)$ is $| i_1 i_2 .. i_w \rangle$,
$i_j \in \{1,2,..,N\}$.
By the cc property every 
$\F(A)|i_1 i_2 .. i_w\rangle
 = \sum_{\sigma\in\Sigma_w} k_{\sigma}(\F(A)) |\sigma i_1 i_2 .. i_w \rangle$ for some scalars $k_\sigma$,
 indeed for every $M\in \Match^N(w,w)$,
$
M |i_1 i_2 .. i_w\rangle
 = \sum_{\sigma\in\Sigma_w} k_{\sigma}(M) |\sigma i_1 i_2 .. i_w \rangle
$,
i.e. the $\F(A)$ action on $\underline{N}^w$ breaks 
into $\Sigma_w$ orbits. 
Thus:

\mdef \mulem  \label{lem:restricto}
The image of a relation 
of width $w$ 
is verified if and only if
it is verified on each subspace 
$\{ \sigma|i_1 i_2 .. i_w\rangle \;|\; \sigma\in\Sigma_w  \}$, 
the subspace of the subset  
$\{ i_1, i_2, .., i_w \} \subset \ul{N}$. 
We note that this is the same as verification on
$\Match^{w}(w,w)$ up to relabelling, 
{\em for each subset}.
I.e. the same as to say that $F$ restricts to a
functor $\F' : \LL \rightarrow \Match^w$ 
on each subset.
(But note also that the various subsets interlock,
and every restriction must hold.)
\qed 

\mdef \textbf{Remark}. \label{rem:width 3}
Note that the monoidal presentation 
(\ref{eq:s21}-\ref{eq:sigmass})
has width 3. 
Thus a pair $F_*$ 
as in (\ref{eq:Fss})
induces a functor
$\F : \LL \rightarrow \Match^N$ if anomalies vanish
in every restriction to $\{i,j,k\}\subseteq \ul{N}$. 

Specifically 
for $R_1 R_2 R'_1 = R'_2 R_1 R_2$ say
--- with 
$\aalph(R) =
(A_1, A_2, ..,A_N, 
\smat A_{12}&B_{12} \\ C_{12}&D_{12}\stam,
\smat A_{13}&B_{13} \\ C_{13}&D_{13}\stam,
..)
$
(as in \ref{eq:alphaform}),
$\; \aalph(R') 
=(a_1, a_2, ..,a_N, \smat a_{12}&b_{12} \\ c_{12}&d_{12}\stam,..)$ 
say ---
every term in the anomaly acting on $|ijk\rangle$ 
is a cubic with indices in $i,j,k$.
E.g.
$A_{12} A_{23} a_{12}$, $A_{13} B_{12} c_{12}$
when $ijk=123$.

\subsection{X-symmetry} \label{ss:0X}

Let $C$ be a natural category (a strict monoidal category
with object monoid freely generated by a single object,
denoted 1).
The proof of the X-Lemma 
in \cite{MR1X}
observes that if $R$ is the image of a
generic 
element in $C(2,2)$ 
under a functor $F:C \rightarrow \Match^N$
--- thus with elements $a_{ij} = a_{ij}(R)$ and so on --- 
then the {\em braid anomaly} 
\[
A_R \; :=\; (R\otimes 1_N )(1_N\otimes R)(R\otimes 1_N)
   - (1_N\otimes R)(R\otimes 1_N)(1_N\otimes R)
\]
has entries that are cubics in the various indeterminate
entries in $R$;
but in particular in each entry we have one of the following (writing $b_{ij}$ for $b_{ij}(R)$ and so on):
\\
$b_{ij}$ (or $c_{ij}$) appears as an overall factor;
\\
$b_{ij}$ and $c_{ij}$ only appear in the form $b_{ij} c_{ij}$.

Each of the three factors in a term in the cubic come, note, from one of the three factors in a term in $A_R$. 
Now suppose we have a second element $S$ in $C(2,2)$,
with entries $a_{ij}(S)$ and so on. 
Then in $A_{RRS}$, say, one of the factors in each term
in a cubic will now comes from $S$, so the cubics are
modified by some $b_{ij}$ becoming $b_{ij}(S)$ and so on.
We observe that 
simultaneous conjugation by an invertible diagonal matrix
still preserves equalities for such cubics.
The general X-Lemma follows immediately from this.

\mdef \mulem \label{lem:X} 
Let $X\in \Mat^N(2,2)$ be any 
invertible diagonal matrix 
(hence $X\in \Match^N(2,2)$)
and $F: \{ \sigma, s \} \rightarrow \Match^N(2,2)$
be any pair. If $F$ induces a functor 
$\F:\LL \rightarrow \Match^N$
then so does $F^X$ where $F^X(\sigma)=XF(\sigma)X^{-1}$ and 
$F^X(s)=XF(s)X^{-1}$.
\qed  

\medskip

A more explicit proof is given in \S\ref{ss:ket}.

\medskip 

This construction gives an action 
on 
the set of all functors, denoted
$(\LL,\Match^N)$ 
(see also  
(\ref{de:funcat})),
of the abelian group
$ \Delta^N := \Mat^N_{\Delta}(2,2)$ 
of invertible diagonal matrices.

This action together with the action of $\Sym_N$ 
(the bijections from (\ref{lem:37III}))
generate 
a group of  symmetries of $(\LL,\Match^N)$,
that we can call `core symmetries'.

\subsection{Ket calculus: conventions}

\mdef  \label{de:KP}
Recall (see e.g. \cite{MR1X}) the convention that the 
$\Mat^N$ (and hence
$\Match^N$) categories use the Kronecker product in the Ab convention. 
The Ab convention is as indicated by:
\[
\mat a&b \\ c&d \tam \otimes \mat e& f \tam 
=
\mat ae & be & af & bf \\ ce&de& cf& df \tam,
\hspace{1cm}
\mat a_1 \\ a_2 \tam \otimes \mat b_1 \\ b_2\tam 
=\mat a_1 b_1 \\ a_2 b_1 \\ a_1 b_2 \\ a_2 b_2\tam 
\hspace{1cm}
\smat 1\\0\stam\otimes\smat 0\\1\stam 
= \smat 0\\0\\1\\0\stam
\]
(In fact either convention is fine, but we need to fix one.
Note that {\em Maxima} and  MAPLE use the aB convention by default.) 
This means in particular that the ordered basis 
1,2 of $\C^2$ 
(specifically we might take $|1\rangle=\mat 1\\0\tam$,
$|2\rangle =\mat 0\\1\tam$, although even this is a choice)
passes, for $(\C^2)^{2}$, to order 
11,21,12,22 with earlier 
indices changing more quickly.
That is $\ket{21} = \ket{2}\otimes\ket{1} = 
\smat 0\\1\stam\otimes\smat 1\\0\stam
= \smat 0\\1\\0\\0\stam
$.
Thus without further adulteration 
\[
\aada \ket{21} = \aada \smat 0\\1\\0\\0\stam = 
\smat 0\\a\\ c \\0\stam = a\ket{21} +c\ket{12} 
\]
\beq \label{eq:12bd}
\aada \ket{12} = \aada \smat 0\\0\\1\\0\stam = 
\smat 0\\b\\ d \\0\stam = b\ket{21} +d\ket{12} 
\eq 
\[
\bra{21} \aada  =  \smat 0&1&0&0\stam \aada = 
\smat 0&a& b &0\stam = a\bra{21} +b\bra{12} 
\]
For $M,N \in \Match^N(1,1)$ we have 
$M\otimes N |ij\rangle =M|i\rangle \otimes N|j\rangle$
and so on.
\newcommand{\kett}[1]{\ket{\ul{#1}}}
\footnote{
In \cite{MR1X} the notation $\ket{\ul{ij}}$ is used,
and different conventions are adopted.
The choice of conventions is, in any case, essentially
unimportant there. But here we must be careful.
}

\subsection{Back to generalities} 

\mdef 
Let $1_N$ denote the identity matrix in $\Match^N(1,1)$.
Given $R,S \in \Match^N(2,2)$ then 
define
\beq 
A_{RRS} \; =\; 
(R\otimes 1_N )(1_N\otimes R)(S\otimes 1_N)
   - (1_N\otimes S)(R\otimes 1_N)(1_N\otimes R)
   \;\;\;\; \in \Match^N(3,3) 
\eq 
In particular this yields the {\em braid anomaly} 
matrix 
$A_R = A_{RRR}$ by substituting $S=R$.

In general for $R \in \Match^N(2,2)$ we will write
$
R_1 = R \otimes 1_N
$ 
and $R_2 = 1_N \otimes R$. So then 
$A_{RRR} = R_1 R_2 R_1 - R_2 R_1 R_2$. 

\medskip 

In light of (\ref{eq:sigmass}) it will be useful to have facility with computing this kind of operator. One approach is to resolve charge-conserving matrices as sums of monomial matrices, as in \S\ref{ss:monom}. 
Another is to use a ket calculus as in \S\ref{ss:ket}.

\subsection{Calculus via sums of monomial matrices} \label{ss:monom}

\mdef 
Supposing that $N \in \N$ is fixed, 
let $\PPmat = \PPmat_{(N)}$ denote the simple permutation matrix in 
$\Match^N(2,2)$,
given by 
$\PPmat\, |ij\rangle = |ji\rangle$ for all $i,j$;
and $1_N$ the identity matrix in $\Match^N(1,1)$.
Indeed for given $N$ we may simply write $\PP$ for $\PPmat_{(N)}$ and $\IImat$ for $1_N$. 

Note that one way to resolve 
$R \in \Match^N(2,2)$  into a sum of two 
monomial matrices is 
\beq 
R  \;\; = \;\; \Delta(R) \; (1_N \otimes 1_N) \; 
     + \;\; D(R) \; \PPmat_{}
\eq 
where the diagonal matrix $\Delta(R)$ has zeros in the 
$ii$ positions and the diagonal matrix $D(R)$ does not. 
Thus for example with $N=2$:
\beq  \label{eq:D=}
\Delta(R) = 
\mat 0 \\ & a_{12}(R) \\ && d_{12}(R) \\ &&&0 \tam 
\hspace{.31cm} \mbox{and} \hspace{.531cm}
D(R)=
\mat a_1(R) \\ & b_{12}(R) \\ && c_{12}(R) \\ &&& a_2(R) \tam 
\eq 
so
\beq  \label{eq:R=}
R = \mat 0 \\ & a_{12}(R) \\ && d_{12}(R) \\ &&&0 \tam 
1_4
+
\mat a_1(R) \\ & b_{12}(R) \\ && c_{12}(R) \\ &&& a_2(R) \tam \PP_{}
\eq 

\mdef 
Consider the case $R=D(R) P_N$. 
A useful observation is
\beq 
D(R) \; \PP_{} \;=\; \PP_{} \; \xD(R)
\eq 
where $\xD(R)$ is given by swapping 
the 12,12 and 21,21 entries and so on, so here
$b_{ij}$ and $c_{ij}$.

So
for example in the simple monomial case,
with $\DR = D(r) \PP_{}$
and 
$\DR' = D(R) \PP_{}  $, say,
then 
\beq \label{eq:monomiRR}
\DR_1 \DR'_2 \; = \; 
((D(r) \PP_{}) \otimes 1_N ) \;\;
(1_N \otimes ( D(R) \PP_{} ))
\eq 
can be `straightened' by promoting the second $D(R)$.
Of course the tensor product affects the promotion.

For example with $N=2$,
and naming variables by:  
$\DR=\smat a_1 \\ &&b_{12}\\&c_{12}\\&&&a_2\stam$,
$\DR'=\smat A_1 \\ &&B_{12}\\&C_{12}\\&&&A_2\stam$,
$\DR''=\smat \alpha_1 \\ &&\beta_{12}\\&\gamma_{12}\\&&&\alpha_2\stam$,
the product in (\ref{eq:monomiRR}) becomes
\[
\DR_1 \DR'_2 = 
\smat \AR \\ &\BR \\ &&\CR \\ &&&\AAR \\
&&&&\AR \\ &&&&&\BR \\ &&&&&&\CR \\ &&&&&&&\AAR 
\stam P_1 
 \smat \ARx \\ &\ARx \\ &&\BRx \\ &&&\BRx \\
 &&&&\CRx \\ &&&&&\CRx \\ &&&&&&\AARx \\ &&&&&&&\AARx 
\stam  P_2
\]
\beq 
=
\smat \AR\ARx \\ &\BR\BRx \\ &&\CR\ARx \\ &&&\AAR\BRx \\
&&&&\AR\CRx \\ &&&&&\BR\AARx \\ &&&&&&\CR\CRx \\ &&&&&&&\AAR\AARx  
\stam  P_1P_2
\eq
\newcommand{\Pone}{\smat 1 \\ &&1 \\ &1 \\ &&&1 \\
&&&& 1 \\ &&&&&&1 \\ &&&&&1 \\ &&&&&&&1 
 \stam }
\newcommand{\Ptwo}{\smat 
1 \\ &1 \\ &&&& 1 \\ &&&&& 1 \\ && 1 \\ &&&1 \\ &&&&&& 1 \\ &&&&&&& 1
\stam }
Thus
\[
\DR_1 \DR'_2 \DR''_1 \; = \; 
((D(r) \PP_{}) \otimes 1_N ) \;\;
(1_N \otimes ( D(R) \PP_{} )) \;\;
((D(\rho) \PP_{}) \otimes 1_N )
\]
(note again the chosen variable names) 
can be `straightened' by 
\[ \hspace{-1.6cm}
\smat \AR\ARx \\ &\BR\BRx \\ &&\CR\ARx \\ &&&\AAR\BRx \\
&&&&\AR\CRx \\ &&&&&\BR\AARx \\ &&&&&&\CR\CRx \\ &&&&&&&\AAR\AARx  
\stam  P_1P_2
\smat \ARxx \\ &\BRxx \\ &&\CRxx \\ &&&\AARxx \\
&&&&\ARxx \\ &&&&&\BRxx \\ &&&&&&\CRxx \\ &&&&&&&\AARxx  
\stam  P_1
\]
\[ \hspace{-1cm} 
=\smat \AR\ARx\ARxx \\ &\BR\BRx\ARxx \\ &&\CR\ARx\BRxx \\ &&&\AAR\BRx\BRxx  \\
&&&&\AR\CRx\CRxx \\ &&&&&\BR\AARx\CRxx \\ &&&&&&\CR\CRx\AARxx \\ &&&&&&&\AAR\AARx\AARxx  
\stam  P_1P_2P_1
\]
\newcommand{\deight}[8]{\smat #1 \\ &#2 \\ &&#3 \\ &&&#4 \\
&&&&#5 \\ &&&&&#6 \\ &&&&&&#7 \\ &&&&&&&#8  
\stam}
Meanwhile
\[
\DR''_2 \DR_1 \DR'_2 \; = \; 
(1_N \otimes (D(\rho) \PP_{})  ) \;\;\;
((D(r) \PP_{}) \otimes 1_N ) \;\;
(1_N \otimes ( D(R) \PP_{} )) \;\;
=
\]
\[
 \hspace{-1.6cm}
 \smat \ARxx \\ &\ARxx \\ &&\BRxx \\ &&&\BRxx \\
&&&&\CRxx \\ &&&&&\CRxx \\ &&&&&&\AARxx \\ &&&&&&&\AARxx  
\stam P_2
\smat \AR\ARx \\ &\BR\BRx \\ &&\CR\ARx \\ &&&\AAR\BRx \\
&&&&\AR\CRx \\ &&&&&\BR\AARx \\ &&&&&&\CR\CRx \\ &&&&&&&\AAR\AARx  
\stam P_1P_2 
\]
\[
= \smat \ARxx\AR\ARx \\ &\ARxx\BR\BRx \\ &&\BRxx\AR\CRx \\ &&&\BRxx\BR\AARx \\
&&&&\CRxx\CR\ARx \\ &&&&&\CRxx\AAR\BRx \\ &&&&&&\AARxx\CR\CRx \\ &&&&&&&\AARxx\AAR\AARx    
\stam P_2P_1P_2
\] 
From this elementary warm-up exercise 
we observe immediately that the permutation factors 
in $\DR_1 \DR'_2 \DR''_1 $ and $\DR''_2 \DR_1 \DR'_2 $ agree:
$\PP_1 \PP_2\PP_1=\PP_2\PP_1\PP_2$, 
so that for example, we have the following.

\medskip

\begin{lemma} \label{lem:DPbraidanom}
Fix $N=2$ and $\PP= \PP_{(N=2)}$. 
For all $\DR = D(\DR)\; \PP\;$ and 
$R'' = \Delta(R'') 1_4 +  D(R'') \PP$
(including singular) 
\beq \label{eq:RRR''} 
\DR_1 \; \DR_2 \; R''_1 \; =\;  R''_2 \; \DR_1 \; \DR_2
\eq 
\end{lemma}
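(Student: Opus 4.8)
The plan is to linearise in $R''$ and then straighten both sides of \eqref{eq:RRR''} into a normal form ``(diagonal matrix)$\,\times\,$(permutation matrix)''. For $\DR$ fixed, both sides of \eqref{eq:RRR''} are additive in $R''$, so by the splitting $R'' = \Delta(R'')\,1_4 + D(R'')\,\PP$ of \eqref{eq:R=} --- in which $\Delta(R'')$ and $D(R'')$ are diagonal --- it suffices to verify \eqref{eq:RRR''} in the two special cases (a) $R'' = E$ a diagonal matrix, and (b) $R'' = D''\PP$ with $D''$ diagonal. No invertibility is used below, so the ``including singular'' cases cost nothing extra.

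For the straightening, write $\DR = D\PP$ with $D := D(\DR)$, and for a diagonal $X \in \Match^2(2,2)$ write $X_{12} := X\otimes 1_2$, $X_{23} := 1_2\otimes X$ for the two evident diagonal operators on $(\C^2)^{\otimes 3}$, and $X_{13}$ for $X$ acting on the first and third tensor factors; thus $\DR_1 = D_{12}\PP_1$ and $\DR_2 = D_{23}\PP_2$. Everything then follows from two facts: (i) conjugating any such $X_{ab}$ by $\PP_i$ merely transposes the labels of the two factors it acts on (for instance $\PP_1 D_{23} = D_{13}\PP_1$, $\PP_2 D_{12} = D_{13}\PP_2$, $\PP_2 D_{13} = D_{12}\PP_2$); and (ii) the $X_{ab}$ mutually commute (being diagonal), while the permutation matrices satisfy $\PP_1\PP_2\PP_1 = \PP_2\PP_1\PP_2$. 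Pushing the $\PP_i$'s to the right in $\DR_1\DR_2 = D_{12}\PP_1 D_{23}\PP_2$ gives the normal form $\DR_1\DR_2 = D_{12}D_{13}\,\PP_1\PP_2$.

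Case (a) is then immediate: using (i) to slide $R''_1 = E_{12}$ leftward through $\PP_1\PP_2$ and (ii) to commute diagonals,
\[
\DR_1\DR_2\,E_{12} \;=\; D_{12}D_{13}\,\PP_1\PP_2\,E_{12} \;=\; D_{12}D_{13}E_{23}\,\PP_1\PP_2 \;=\; E_{23}\,\DR_1\DR_2 \;=\; R''_2\,\DR_1\DR_2 .
\]
Case (b) uses the same moves together with the braid identity $\PP_1\PP_2\PP_1 = \PP_2\PP_1\PP_2$, with $R''_1 = D''_{12}\PP_1$ and $R''_2 = D''_{23}\PP_2$:
\begin{align*}
\DR_1\DR_2\,D''_{12}\PP_1
&= D_{12}D_{13}D''_{23}\,\PP_1\PP_2\PP_1
\;=\; D_{12}D_{13}D''_{23}\,\PP_2\PP_1\PP_2 \\
&= D''_{23}\PP_2\,D_{12}D_{13}\,\PP_1\PP_2
\;=\; R''_2\,\DR_1\DR_2 .
\end{align*}
Equivalently, case (b) is the $\DR' = \DR$ specialisation of the warm-up computation of $\DR_1\DR'_2\DR''_1$ against $\DR''_2\DR_1\DR'_2$ carried out just before the statement, where the permutation factors $\PP_1\PP_2\PP_1$ and $\PP_2\PP_1\PP_2$ already match.

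The calculation is mechanical; the one point that needs attention --- and where a careless version fails --- is that the two matrices in the doubled slots must be the \emph{same} monomial $\DR$. For three distinct monomials the straightened diagonal parts disagree (e.g.\ the coefficients $c_{12}A_1\beta_{12}$ and $\beta_{12}a_1C_{12}$ visible in the warm-up, which coincide only when $\DR' = \DR$), so \eqref{eq:sigmass}(II) is a genuine, asymmetric constraint in general. The content of the lemma is that this relation --- and, after the rewriting noted below \eqref{eq:sigmass}, also \eqref{eq:sigmass}(III) --- holds automatically as soon as the matrix occupying the doubled slots is of $\PP$-type, which is exactly the recurring situation in the low-rank analysis of \S\ref{ss:N2s}.
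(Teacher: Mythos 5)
Your proposal is correct and takes essentially the same route as the paper: you use the paper's own splitting $R'' = \Delta(R'')\,1_4 + D(R'')\,\PP$, settle the monomial part by the straightening of the warm-up computation (with $\DR'=\DR$ forcing the diagonal factors to agree), and slide the diagonal part through $\PP_1\PP_2$. The only cosmetic difference is in the diagonal case, which you handle by direct conjugation identities, whereas the paper applies the already-established monomial case to $\Delta''\PP$ and then strips the extra $\PP_1$ using $\PP^2=1$; both amount to the same relabelling of tensor factors.
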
 
\proof{
First consider the case $R''=\DR'' = D(R'') \PP$. 
Compare  the diagonal factors in the evaluations of 
$\DR_1 \DR'_2 \DR''_1$ and $\DR''_2 \DR_1 \DR'_2$ above 
and note that
in the present case 
$\DR=\DR'$ so 
$a_1 = A_1$, $c_{12} =C_{12}$, so 
$a_1 C_{12} =a_1 c_{12} = A_1 c_{12}$ and so on. 
Thus 
(\ref{eq:RRR''}) holds in this case.

Now suppose we turn on the diagonal terms in 
(\ref{eq:R=}) for $R''$. 
Thus $R'' = \Delta''+Q''$, say. 
Of course $\DR_1 \DR_2 Q''_1 = Q''_2 \DR_1 \DR_2$ 
from the established case of (\ref{eq:RRR''}) so the
full version 
requires
$\DR_1 \DR_2 \Delta''_1 -\Delta''_2 \DR_1 \DR_2$ to vanish.
Again by the established part of the Lemma we have 
$\DR_1 \DR_2 \Delta''_1 \PP_1 =\Delta''_2 \PP_2 \DR_1 \DR_2$,
so $\DR_1 \DR_2 \Delta''_1 
=\Delta''_2 \PP_2 \DR_1 \DR_2 \PP_1
= \Delta''_2 \PP_2 \PP_2 \DR_1 \DR_2  $  
(using established special case
$\DR_1 \DR_2 \PP_1 = \PP_2 \DR_1 \DR_2$ at the last step)
as required.
\qed}

\medskip 

Thus in particular we have, for each 
$\DR =D(\DR) \PP$, a
solution to the braid relation by putting 
$R''=\DR$,
but we also have solutions to the mixed relations
for any $R''$ of the more general form.

\section{Rank \texorpdfstring{$N=2$}{N=2} loop braid solutions}

We will prove 
here  
that there are
three kinds of solutions in rank $N=2$, given 
in Lem.\ref{lem:anof}.

\mdef  \label{pa:00}
A couple of organisational principles are convenient to have in mind for spin-chain braid representations
$F:\Bcat\rightarrow\Match^N$. 
These are derived more or less directly from \cite{MR1X} 
(and familiarity with this 
paper will significantly help the reader here). 
\\
(P.I) Each braid representation in rank $N$ 
restricts to a representation in lower rank by taking a subset of indices $\{ 1,2,..., N\}$. 
Thus in particular each representation 
with $N>2$ restricts to  a collection of $N=2$ representations
(just as complete graph $K_N$ restricts to a collection of $K_2$s).
And these $N=2$ representations 
fall into one of six types:
0, /, $\pai$, $\mai$, $\fff$, $\mfii$  
(recalled explicitly in (\ref{eq:ccBN2})).
\\
(P.II) Noting that each $N=2$ braid representation 
is   given by $F(\sigma)$ parameterised by 
\[
F(\sigma) = 
\mat 
a_1 \\ &a_{12}&b_{12} \\ &c_{12}&d_{12} \\ &&&a_2
\tam  
\]
we note (a)
that either $F(\sigma)$ is a scalar multiple of
the identity, or at least one of $a_{12}, d_{12}$ vanishes;
(b) that $a_1, a_2$ are eigenvalues, and $a_{12}+d_{12}$
is the sum of the `middle two' eigenvalues.
\\
(P.III) 
\label{p:F(s)} 
Restricted to the generator $s$ our $F(s)$ must give a braid representation that is also a representation of the permutation category --- of symmetric groups.
In particular $s^2 =1$ so the Jordan form of $F(s)$ is
diagonal.
For $F(s)$ with $N=2$ its
middle $2\times 2$ has eigenvalues 
$+1,+1$ or $-1,-1$,  
or eigenvalues $1,-1$.
Thus either it is up-to-sign the identity matrix
--- whereupon we have $F(s)=\pm 1_4$, type 0 by the
classification;
or else its diagonal entries obey
$a_{12} + d_{12} = 0$
and hence (cf. (P.II)) 
$a_{12} = d_{12} = 0$ --- type /.  
\\
(P.IV) By the X-Lemma (see (\ref{lem:X})) 
each X-orbit of solutions has a representative where the
nonzero off-diagonal elements of $F(s)$ are all 1.

\subsection{Rank \texorpdfstring{$N=2$}{N=2} loop braid solutions: main Lemma} \label{ss:N2s}

Here we give a complete set of 
solutions in rank $N=2$:

\begin{lemma} \label{lem:anof}
If
$(F(s),F(\sigma))$ gives a loop braid functor
$\F : \LL \rightarrow \Match^2$
then it is one of the following
(organised according to the braid representation type
of $F(\sigma)$).
\\
(I) If
$R = F(\sigma)$ is of type $\aaa$  
or type $\mai$
then $F(s)$ must take the form 
$S=\Stwo{1}{1/c}{c}{-1} = \Sigma \PP \;$ 
where $\Sigma = \dfour{1}{1/c}{c}{-1}$
for some $c\neq 0$,
up to overall sign.
Then for each specific such $S$, i.e. each $c$, 
we get a type $\aaa$ solution if and only if
for some $A_1, A_2$ we have
$R=\smat A_1 \\&A_1+A_2&A_1/c\\&-cA_2&0\\&&&A_2\stam$.
And similarly in type $\mai$. 
Note this means that  
the braid ``gauge'' parameter $\kkk$ in 
(\ref{eq:ccBN2}) is locked to $\kkk = A_1/c$.
In particular if we want again to be free to choose
$\kkk$ then $c = A_1/\kkk$ is forced; and if we want to chose $c$ (to set $c=1$ say, see later), for given $A_1$,
then 
$\kkk  $ is {\em not} free.
Note $A_1,A_2 \neq 0$ by invertability and $A_1+A_2\neq 0$ in type $\aaa$ or $\mai$. 
Note, in this case 
$SR = \smat A_1 \\ & -A_2 \\ &c(A_1+A_2)& A_1 
        \\ &&& -A_2 \stam$.
\\
Applying $f^\sigma$ from (\ref{eq:fsig}) we get 
$f^\sigma (S)  = \smat -1 \\ &&c \\ &1/c \\ &&& 1\stam$
and 
$f^\sigma (R) = \smat A_2 \\ &0&-cA_2 \\ &A_1 /c &A_1 +A_2 \\ &&& A_1 \stam$.
We can bring this $S$ back into the previous form by applying the homomorphism given by $S\leadsto -S$
and taking a new $c$ given by $c\leadsto -1/c$.
Since (almost) all $A_1,A_2$ give solutions, 
interchanging them takes us to a point on the same variety, 
then giving
$R\leadsto \smat A_1 \\ &0&A_1 /c \\ &-c A_2  &A_1 +A_2 \\ &&& A_2 \stam$,
which, with the restored $S$, 
is thus a viable parameterisation for type $\mai$.
\\
(II) \hspace{.01cm} If $R = F(\sigma)$ is of type $\fff$ or $\mfii$
there is no solution. 
\\ (III) If $R = F(\sigma)$ is of type 0
then $S=\pm 1_4$,
and there is no further constraint on $F(\sigma)$.
\\ (IV) If $R = F(\sigma)$ is of type / then 
$S = \Sigma \PP \;$ where $\Sigma = \dfour{1}{1/c}{c}{\pm1}$
for some $c\neq 0$,
up to overall sign,
and then
$R = \smat A_1 \\&&\mu/C c 
          \\& {\mu} C c \\ &&& A_2\stam$
with the four further variables 
$A_1, A_2, C, \mu$ non-zero but otherwise free.
Note   
these variables
are independent of $c$.
To match (\ref{eq:ccBN2}) we put 
${\mu} Cc=\gamma^{}/\kkk$
and $\mu/Cc = \gamma \kkk$, 
so $\mu / \gamma^{} =\gamma/\mu = Cc\kkk$
so $\mu=\gamma$ and 
$Cc = 1/\kkk$.  
The parameterisation has been chosen so that 
$c$ again captures the X-symmetry, while 
$A_1, A_2, C, \mu$ are
`physical/non-gauge' in the sense that
(unlike $c$) they {\em do} affect operator spectrum.
In particular 
$SR = \smat A_1 \\ &{\mu} C
          \\&&\mu/C \\ &&& A_2 \stam$.
\end{lemma}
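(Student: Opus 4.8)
The plan is to pin down $R=F(\sigma)$ by its restriction to the braid category and $S=F(s)$ by the relations it satisfies on its own, then read off the admissible pairs from the two mixed relations. Since $(F(s),F(\sigma))$ induces a functor, $R$ satisfies the braid relation \eqref{eq:sigmass}(I), so by \cite{MR1X} (recalled in \eqref{eq:ccBN2}) it is of one of the six types $0,/,\aaa,\mai,\fff,\mfii$; and $S$ satisfies $S^{2}=1$ and the braid relation \eqref{eq:sss}, so by (P.III) it is either $\pm 1_{4}$ (type $0$) or of type $/$, in which latter case $S^{2}=1$ forces its two diagonal eigenvalues into $\{\pm 1\}$ and the product of its two middle anti-diagonal entries to $1$; hence, up to overall sign, $S=\Stwo{1}{1/c}{c}{\epsilon}$ with $c\neq 0$, $\epsilon\in\{\pm 1\}$ (and any such $S$ does obey $S^{2}=1$ and its braid relation, the latter by Lemma~\ref{lem:DPbraidanom} with all three matrices equal to $S$). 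It then remains to impose relation (II) of \eqref{eq:sigmass}, i.e.\ $R_{1}R_{2}S_{1}=S_{2}R_{1}R_{2}$, and relation (III), which by the remark after \eqref{eq:sigmass} reads $S_{1}S_{2}R_{1}=R_{2}S_{1}S_{2}$. By Lemma~\ref{lem:restricto} these may be checked orbit by orbit on the $\Sigma_{3}$-orbits of $\{1,2\}^{3}$; the two singleton orbits give only automatic scalar identities, so all the content sits on the two three-element orbits $\{112,121,211\}$ and $\{122,212,221\}$, where both sides become explicit $3\times 3$ matrices.

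Most cases then collapse via the monomial calculus of \S\ref{ss:monom}. If $S=\pm 1_{4}$ then $s_{1}=s_{2}=\pm 1_{8}$, so $s_{1}s_{2}=1_{8}$ and (III) degenerates to $\sigma_{1}=\sigma_{2}$, forcing $R$ scalar, i.e.\ type $0$; conversely, for type-$0$ $R$ both relations hold for every $\pm 1_{4}$, while (II) with $R$ scalar forces a solution-$S$ to be scalar, hence $\pm 1_{4}$. Thus $S=\pm 1_{4}$ occurs exactly when $R$ is of type $0$ (with no further condition on $R$): this is case (III), and it shows that for $R$ not of type $0$ the matrix $S$ must be of type $/$. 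When $S$ is of type $/$ it is monomial, $S=D(S)\PP$, so writing any $R\in\Match^{2}(2,2)$ as $\Delta(R)\,1_{4}+D(R)\PP$ (as in \S\ref{ss:monom}) and applying Lemma~\ref{lem:DPbraidanom} with $\DR:=S$, $R'':=R$ gives $S_{1}S_{2}R_{1}=R_{2}S_{1}S_{2}$ for free --- so with $S$ of type $/$, relation (III) is automatic and only (II) carries information. If moreover $R$ is itself of type $/$, then $R=D(R)\PP$ is monomial too, and Lemma~\ref{lem:DPbraidanom} with $\DR:=R$, $R'':=S$ makes (II) automatic as well; hence every type-$/$ $S$ works, which is case (IV) (the $A_{1},A_{2},C,\mu$-parameterisation being a relabelling of $F_{/}$ chosen so that the one X-gauge parameter $c$ is precisely the one shared with $S$, and the formula for $SR$ following by multiplying).

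The one genuinely computational step is relation (II) for the non-monomial types $\aaa$ and $\fff$, with $S$ of type $/$ as above. Here I would write $R_{1},R_{2},S_{1},S_{2}$ as explicit $3\times 3$ matrices on each of the two non-singleton orbits --- being careful with the Ab tensor convention of (\ref{de:KP}) when ordering the orbit bases --- and compare $R_{1}R_{2}S_{1}$ against $S_{2}R_{1}R_{2}$. On $\{112,121,211\}$, into which only $a_{1}(R)$ and the middle block of $R$ enter, the comparison reduces (using $\alpha+\beta\neq 0$) to the single condition $b_{12}(R)=a_{1}(R)/c$, i.e.\ $\kkk=A_{1}/c$, with $\epsilon$ unconstrained. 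On $\{122,212,221\}$, into which $a_{2}(R)$ also enters, the type-$\aaa$ comparison forces in addition $\epsilon=-1$ --- giving case (I) --- whereas for type $\fff$, where $a_{2}(R)=a_{1}(R)=\alpha$, the off-diagonal entries force $\alpha^{2}=\beta^{2}$, contradicting the defining inequalities $\alpha\neq\beta$ and $\alpha+\beta\neq 0$ of type $\fff$, so no extension exists: this is case (II). The barred types $\mai,\mfii$ follow from $\aaa,\fff$ by composing with the monoidal functor $f^{\sigma}$ of Lemma~\ref{lem:37III}, which carries loop braid functors to loop braid functors and interchanges barred and unbarred types, tracking $(R,S)$ as recorded in the statement. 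I expect this last comparison --- especially on the second orbit, where it must at once lock $\kkk$, fix the sign $\epsilon$, and eliminate the $\fff,\mfii$ cases --- to be the main obstacle, though it is really bookkeeping once the orbit bases are fixed consistently with (\ref{de:KP}).
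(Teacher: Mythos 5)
Your proposal is correct and follows essentially the same route as the paper's own proof: the braid classification plus (P.III) pins $S$ to type 0 or /, relation (III) in the form $S_1S_2R_1=R_2S_1S_2$ together with Lemma~\ref{lem:DPbraidanom} disposes of the type-0 case and of type-/ $R$, the explicit check of relation (II) for $R$ of type $\aaa$/$\fff$ against a type-/ $S$ yields the locking $\kkk=A_1/c$, the forced sign $-1$, and the exclusion of $\fff$ (via $\alpha+\beta\neq 0$, $\alpha\neq\beta$), and the barred types are handled by $f^\sigma$. The only difference is cosmetic: you organise the crux computation blockwise on the two three-element charge orbits via the ket calculus, whereas the paper expands $R=\Delta+D\PP$ and computes with $8\times 8$ monomial matrices — the resulting conditions (the paper's $Bca=aA_1$ and $\pm aA_2=aC/c$) are exactly the ones you predict.
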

\proof{ 
Observe that all possibilities for $F(\sigma)$ are
considered, by (\ref{eq:ccBN2}), so it remains to verify the
extensions $F(s)$ in each case and determine any 
further constraints on $F(\sigma)$.
First consider the general form of 
$F(s) = S$.
The braid classification 
as in (\ref{eq:ccBN2})
applies to $F(s)$, 
and (\ref{pa:00})(P.III), so
$S$ is type 0 or /. 
In cases (I), (II), (IV)
$R_1 \neq R_2$, so 
from the $R_1S_2S_1=S_2S_1R_2$ identity 
(\ref{eq:sigmass})(III)
we have that $S$ cannot be the 
unit matrix (or minus) here.
So by (\ref{pa:00})(P.III)
we have that 
$S=\Sigma \PP \;$ where 
$\Sigma = \dfour{1}{1/c}{c}{\pm1}{}{}{}{}$ 
for some $c$ and some sign,
up to an overall sign.
Conversely in case (III) $S$ must be type 0.

\medskip 

\noindent 
Cases (I) and (II):
\\
Here we consider $F(\sigma)$ of form  $R=\Delta+\DR = \Delta +D\PP\;$
where $\Delta = \dfour{0}{a}{d}{0}$
and 
$$
D\PP = \smat A_1 \\ &&B \\ & C \\ &&& A_2\stam
=\dfour{A_1}{B}{C}{A_2} \smat 1 \\&&1 \\&1 \\&&&1\stam
$$
for given {$,A_1,A_2$} and suitable $a,d,B,C$.
This is the most general form, but we know in particular
from (\ref{pa:00})(II) that $ad=0$.
For this case of $R$, then, the RRS anomaly
here takes the form
\beq \label{eq:anomddS}
(\Delta +\DR)_1 \; (\Delta +\DR)_2 \; S_1 
 \; - \; S_2 \; (\Delta +\DR)_1 \; (\Delta +\DR)_2
\eq 
Expanding (\ref{eq:anomddS}) we get
\[
\underbrace{\DR_1 \DR_2 S_1 - S_2 \DR_1 \DR_2}_{=0}
+
\DR_1 \Delta_2 S_1 - S_2 \DR_1 \Delta_2
+
\Delta_1 \DR_2 S_1 - S_2 \Delta_1 \DR_2
+
\underbrace{\Delta_1 \Delta_2 S_1}_{=0} -
\underbrace{ S_2 \Delta_1 \Delta_2}_{=0}
\]
where the first cancellation follows from \ref{lem:DPbraidanom};
and the $\Delta_1 \Delta_2 = 0$ because by \cite{MR1X}
in type $\aaa$ or $\fff$ we have
$\Delta = \smat 0\\&a \\ &&0 \\ &&& 0 \stam$
(or indeed 
$\Delta = \smat 0\\&0 \\ &&d \\ &&& 0 \stam$
for $\underline{\aaa} $ or $\mfii$
which 
cases we can treat by symmetry).

Keeping to the $d=0$ case for now,
the first term we need to compute is thus
\[
\DR_1 \Delta_2 S_1 = 
D_1 \PP_1 \Delta_2 \Sigma_1 \PP_1
=
D_1 \PP_1 \!\!
\smat 0\\ &0\\ &&a \\ &&&a \\ 
&&&&0 \\&&&&& 0 \\ &&&&&& 0 \\ &&&&&&& 0 \stam\!\!
\smat 1 \\ & 1/c \\ && c \\ &&& \pm 1 \\ 
&&&& 1 \\ &&&&& 1/c \\ &&&&&& c \\ &&&&&&& \pm 1 \stam \!\!
\PP_1
=
D_1 \!\!\smat 0 \\ & ca \\ && 0 \\ &&&\pm a \\ 
&&&&0 \\ &&&&& 0 \\ &&&&&& 0 \\ &&&&&&& 0  \stam\!\!
\PP_1^{\,2}
\]
while the second is
\[
S_2 \DR_1 \Delta_2 = 
\Sigma_2 \PP_2 D_1 \PP_1 \Delta_2
=
\smat 1 \\ & 1 \\ && 1/c \\ &&& 1/c \\
&&&& c \\ &&&&& c \\ &&&&&& \pm 1 \\ &&&&&&&\pm1\stam
\PP_2 \deight{A_1}{B}{C}{A_2}{A_1}{B}{C}{A_2} \PP_1 
\deight{0}{0}{a}{a}{0}{0}{0}{0}
\]
\[
=\smat 1 \\ & 1 \\ && 1/c \\ &&& 1/c \\
&&&& c \\ &&&&& c \\ &&&&&& \pm 1 \\ &&&&&&&\pm1\stam
\PP_2 \deight{0}{Ba}{0}{A_2 a}{0}{0}{0}{0} \PP_1 
=\deight{0}{B a}{0}{0}{0}{c A_2 a}{0}{0}  \PP_2 \PP_1
\]
then
\[
\Delta_1 \DR_2 S_1 = \deight{0}{a}{0}{0}{0}{a}{0}{0}\!\!\!
\deight{A_1}{A_1}{B}{B}{C}{C}{A_2}{A_2} \!\!
\PP_2 \!\! \deight{1}{1/c}{c}{\pm1}{1}{1/c}{c}{\pm1}  \!\!\PP_1\]
\[= \!\! 
\deight{0}{a A_1 /c}{0}{0}{0}{aC(\pm1)}{0}{0}  \!\!\PP_2 \PP_1
\]
and
\[
S_2 \Delta_1 \DR_2  = 
\Sigma_2 \PP_2 \!\!
\deight{0}{a}{0}{0}{0}{a}{0}{0} \!\!\!
\deight{A_1}{A_1}{B}{B}{C}{C}{A_2}{A_2}
\PP_2  \!
\]
\[
=  \!\!
\deight{1}{1}{1/c}{1/c}{c}{c}{\pm1}{\pm1} \!\! \PP_2 \!\!
\deight{0}{a A_1}{0}{0}{0}{a C}{0}{0}
\PP_2 
= \deight{0}{a A_1}{0}{aC/c}{0}{0}{0}{0}
\]
Observe from the $\PP$ factors
that $\DR_1 \Delta_2 S_1$ 
must cancel $-S_2\Delta_1\DR_2$; 
and
$-S_2\DR_1\Delta_2$ must cancel $\Delta_1\DR_2 S_1$.
Necessary and sufficient in both cases are:
\beq  \label{eq:Bca}
Bca = a A_1 , \hspace{1cm}
\pm a A_2 = aC/c
\eq 
Here $a \neq 0$ 
and so $B=A_1 / c$ and $C= \pm c A_2$, so 
$BC = \pm  A_1 A_2$. 
Since 
$BC$ is determined by $A_1, A_2$
we 
are in type $\aaa$ not $\fff$, so $BC = -A_1 A_2$ so the 
relative sign in 
$S$ is forced $-$ as given in the Lemma;
and $a=A_1 +A_2$ and $R$ is as given.
This concludes the proof of (I) and (II).
}

Note that if we want to fix $c=1$ 
in case (I)
(using the X-symmetry as in Lemma~\ref{lem:X})
then 
(having also fixed that $\bra{11} S \ket{11} =1$
and determined then that
$\bra{22} S \ket{22} =-1$)
we also fix 
$B=A_1$ and $C=-A_2$. 
The only way to vary these latter two is to vary $c$
in $S$. 

\medskip 

\noindent 
Case (III): 

Notice first that in each case, \eqref{eq:s21}, \eqref{eq:sss}, and \eqref{eq:sigmass} are clearly satisfied.
With  $F(\sigma)$ of the form $F(\sigma)=\alpha 1_4$, 
writing the ansatz 
\beq  \label{eq:aij}
F(s) = \smat 
a_1 
\\ & a_{12} & b_{12} 
\\ & c_{12} & d_{12}
\\ &&& a_2
\stam
\eq 
then (\ref{eq:sigmass} II) gives $F(s)\otimes 1_2 = 1_2 \otimes F(s)$, hence
\[
\smat 
a_1 
\\ & a_{12} & b_{12} 
\\ & c_{12} & d_{12}
\\ &&& a_2
\\ &&&& a_1 
\\ &&&&& a_{12} & b_{12} 
\\ &&&&& c_{12} & d_{12}
\\ &&&&&&& a_2
\stam
=
\smat a_1 \\ & a_1 \\ && a_{12} & & b_{12} 
\\ &&& a_{12} && b_{12} 
\\ && c_{12} && d_{12}
\\ &&& c_{12} && d_{12} 
\\ &&&&&& a_2
\\ &&&&&&& a_2
\stam 
\]
We immediately read off 
$a_1 = a_{12} = d_{12} =a_2 $
and 
$b_{12} = 0 = c_{12}$.
With $s^2 =1$, this 
gives the result.

\noindent
Case (IV): 

From the first paragraph, any $F(s)$ exist extending the $/$ variety, they are also in the $/$ variety.

\newcommand{\dd}{\mathfrak{d}} 

Now, with 
\[
F(s) = \dd \PP = 
   \smat a_1 \\ & b_{12} \\ && c_{12} \\ &&& a_2 \stam
   \smat 1 \\ &&1 \\ &1 \\ &&& 1 \stam ,
\]
(\ref{eq:sigmass} II) becomes
\[
 (D \PP\otimes 1_2 )(1_2 \otimes D\PP)(\dd\PP\otimes 1_2)
-(1_2 \otimes \dd\PP) (D \PP\otimes 1_2 )(1_2 \otimes D\PP)
= 0
\]
which, by Lemma~\ref{lem:DPbraidanom} is satisfied for any matrix $\dd$.
Using Lemma\ref{lem:DPbraidanom} again, (\ref{eq:sigmass} III) adds no additional constraints, thus using (\ref{p:F(s)}) we have all solutions.
\qed

\section{Constructions for the main Theorem} \label{ss:prep}

Our task here is first to give  certain 
combinatorial sets $\SSSS_N$ and 
$J_N^{\pm} \hookrightarrow \SSSS_N$ 
for each rank $N$; 
second to give a parameter space for each 
$\lambda \in \SSSS_N$; 
and third to explain how each element
and point in parameter space 
yields a decoration of the complete graph $K_N$,
that encodes a pair of matrices in $\Match^N(2,2)$, 
and hence formally a monoidal functor 
$\FF: \LL \rightarrow \Match^N$
Finally 
we must prove that this pair indeed gives a functor,
and that all such functors arise this way.

Again following \cite{MR1X}, we can enumerate 
solutions in two ways. 
One is to give all solution varieties;
and the other is to give a transversal up to 
the $\Sym_N$ symmetries manifested by Lem.\ref{lem:37III}.
(Recall, e.g. from \cite{MR1X}, that the 
absolute notion of isomorphism is less straightforward 
for monoidal-category representation theory than for 
Artinian representation theory. Our objective here is 
not to give a transversal with respect to some ultimate notion of isomorphism,
but to understand all representations,
taking advantage of the isomorphisms that serve this practical end.)

\subsection{Index sets for enumerating solution varieties}

Here we construct the 
index sets $\SSSS_N$,  
and corresponding parameter spaces 
(we match to actual solutions in \S\ref{ss:recipe1}). 

\mdef  \label{de:SSSN}
Recall from \cite{MR1X} that 
for braid representations in rank-$N$ 
we start with a 
partition of the vertices of $K_N$,
calling the parts `nations'.
We then partition each nation further into `counties'.
In \cite{MR1X} the counties are ordered  in each nation
and partitioned into two subsets. The set of such structures is the set $\SSS_N$
(whose elements are visualised in \cite{MR1X} as collections of two-coloured composition tableaux). 

In our loop-braid 
case we 
find that, in order for a braid representation to extend to loop-braid, we must 
restrict to at most two counties per nation. The counties in each nation are ordered, so this amounts to the choice of a (first county) subset for each nation. (We then colour the counties from two colours, but in our case the colours are forced.)

Next comes the new ingredient that is not merely a restriction, 
which is that a subset of the nations is chosen (those that will be associated to +, i.e. +1 eigenvalue of $S$ for individuals in the first county). 

The above features characterise the loop-braid index set  $\SSSS_N$. 
It is useful also to give  
a more formal construction for $\SSSS_N$ as follows.

We continue to use notation from \cite{MR1X}. 
And we add a few more devices. 
In particular for $S$ a set, $\Pas(S)$ denotes the set of partitions of $S$ into an ordered pair of parts
(thus $\Pas(S)$ is in bijection with the power set
$\Power(S)$).
Example:
\[
\Pas(\{x,y\}) = \{ 
  (\{x,y\},\emptyset),\; (\{x\},\{y\}), \;
  (\{y\},\{x\}),\; (\emptyset,\{x,y\}) \}
\]
And $\Pas'(S)$ the subset of these in which the first part is not empty. 
Further, given an indexed set of sets $\{ S_i \}_{i \in I}$ that are disjoint we write 
$\prod_{i \in I} S_i$ for the set whose elements are 
sets made by selecting one element from each $S_i$.

\newcommand{\tabb}{\ytableausetup{smalltableaux}}
\newcommand{\bu}[1]{\begin{ytableau} #1 \end{ytableau}}

Given a partition $p$ of $\ul{N} {=\{1,\ldots,N\}}$ let us write 
$$ 
Q_p =  
\prod_{q\in p} \Pas'(q)
$$  
(thus the set given by the choices of a non-empty subset $p_{i1}$ of 
each part $p_i$, whose elements are the collections
of pairs $(p_{i1},p_{i2})$ where 
$p_{i2} = p_i \setminus p_{i1}$).
Examples: 
$$
Q_{\{ \{ 1,2 \}\}} 
= \;  \; \bigcup_{r\in\Pas'(\{1,2\})} \; \{\{ r\} \}  
\; =\;  \{ \; \{  (\{1,2\},\emptyset) \},\; 
\{ (\{1\},\{2\}) \}, \;
\{  (\{2\},\{1\}) \} \; \}
\; = \;  
\left\{ \bu{1&2} , \; \bu{1\\2} , \; \bu{2\\1} 
\right\} 
$$
(hopefully a multi tableau visualisation is easier on the eye - here specifically the multi-tableaux consist only of single tableau);
and for a case requiring a proper multi-tableau:
$$
Q_{\{ \{ 1\},\{2 \}\}} 
  = \Pas'(\{1\}) \prod \Pas'(\{2\}) 
  = \{ \{ (\{1\},\emptyset), (\{2\},\emptyset) \} \} 
  =\left\{ \bu{1}\;\;\bu{2} \right\}   .
$$
Altogether then we have a set 
\[
\sS'_N = \bigcup_{p \in \PP(N)} Q_p
\]
{where $\PP(N)$ is the set of partitions of $\underline{N}$.}
It is convenient to draw nations and their counties - ordered two-part set partitions - as composition tableaux
(cf. e.g. \cite{Shubhankar18}). 
And collections of nations as collections of tableaux.
So for example
\[
\tabb 
\sS'_2 = Q_{\{ \{ 1,2 \}\}}  \bigcup Q_{\{ \{ 1\},\{2 \}\}}
= \left\{ \bu{1&2} , \; \bu{1\\2} , \; \bu{2\\1} 
\right\} 
\cup 
\left\{ \bu{1}\;\;\bu{2} \right\} 
= \left\{ \bu{1&2} , \; \bu{1\\2} , \; \bu{2\\1} 
, \;\; \bu{1}\;\;\bu{2} \right\} 
\]
(note the last set contains only a single composite element, drawn as a sequence of two nations,
although this drawn order has no significance).
For an example of an element in larger rank we have
{{ 
\beq \label{eq:9X3xx}
\ytableausetup{smalltableaux} 
\bu{6&8\\1}\;\;
\begin{ytableau} 9&X \\ 3  \end{ytableau} \;\;
\begin{ytableau} 2&4  \end{ytableau} \;\;
\begin{ytableau} 5 \\ 7  \end{ytableau} 
\;\;\; \in \sS'_{10} 
\eq
}}
This raises the question of how to order the nations in
drawing such a picture. This is unimportant here but useful later. Later we order with 
smaller 
nations first; and then larger first counties first.

\mdef \label{pa:SSSSN}
Finally 
\[
\SSSS_N = \bigcup_{z \in \sS'_N} \Pas ( z )
\]
Example:
\[
\SSSS_2 = \;\left\{\; \left( \,\bu{1&2},\right), \; \left(,\bu{1&2}\right), \;
\left(\bu{1\\2},\right),\; 
\left(,\bu{1\\2}\right),\; 
\left(\bu{2\\1},\right), \; 
\left(,\bu{2\\1}\right), \;
(\bu{1}\;\bu{2},), \;
(\bu{1},\bu{2}), \; (\bu{2},\bu{1}), \;
(,\bu{1}\;\bu{2}) \right\} 
\]

\mdef  \label{pa:shapes}
The symmetric group $\Sym_N$ acts on 
$\lambda \in \SSSS_N$ by
application to the individuals in the counties.

The {\it shape} of an element $\lambda \in \SSSS_N$ is the diagram obtained by   ignoring the entries in the boxes.
Note however that these combinatorial objects are multisets rather than sets. 
(As we will see, they are both beautiful and useful. 
We study them in  \S\ref{ss:J}.)

It will be evident that 
a transversal of the orbits of the $\Sym_N$ action is 
described by the set of shapes.
(In \S\ref{ss:recipe} we give a way to convert the set of 
shapes into a `standard' transversal in $\SSSS_N$.)

\mdef 
We fix the ground field $\C$. 
Associated to each $\lambda \in \SSSS_N$ there is a `type-space', call it $\C^\lambda$. 
There is a non-zero parameter for each county
- we require $\alpha_s + \beta_s \neq 0$ for the two
parameters in the same nation; 
and a pair of non-zero parameters for each pair of nations.  

Let $\SSSSC_N$ denote the set whose elements are pairs
$(\lambda,\ul{x})$ where $\lambda\in\SSSS_N$ and 
$\ul{x}$ is a point in $\C^\lambda$.

\subsection{Recipe constructing (all) representations}  \label{ss:recipe1}

\newcommand{\Cc}{C}  

Here we give a construction for varieties of cc
loop braid representations
({Theorem~\ref{th:main}} will show this is gives them all). 
The varieties are indexed by $\SSSS_N$, 
and $\SSSSC_N$
describes the parameter space of each variety.

\mdef 
It will be helpful to give names for the individual parameters in $\C^\lambda$. 
To this end we can order the nations (i.e. composition tableaux) in 
 $\lambda\in \SSSS_N$, 
$n_1, n_2, ..., n_m$, as follows ---
order first into the ordered pair; 
then within each component using the natural order on nation sizes;
and then at fixed size using the 
natural order on second-county sizes.
Finally the repeats of a given shape are ordered child-first
{(i.e. nation with lowest numbered resident first)}.

Now for each nation $n_s$ fix 
the two non-zero parameter names $\alpha_s,\beta_s$,
associated to the first (upper) and second county respectively.
(Recall we require in addition that $\alpha_s+\beta_s\neq 0$.)
For each pair of nations $n_s,n_t$ with $s<t$ fix 
the two non-zero parameter
names $\mu_{s,t}$ and $\Cc_{s,t}$.

\mdef  \label{de:RR}
Next we give a construction for each $N \in \N$
of a function 
\begin{align} 
\RR :  \SSSSC_N \;\;\;& \rightarrow \Match^N(2,2) \times \Match^N(2,2)
\\
 (\lambda,\xx) \;\; &\mapsto \hspace{.6541in} (S,R)
\end{align}
We will show that this gives all 
charge conserving loop braid representations.

\mdef  \label{lamex1}
For $(\lambda,\xx) \in \SSSSC_N$
we encode $R$ in $\RR(\lambda,\xx) $ by 
$\aalph(R) = (a_1,\ldots,a_N,A(1,2),\ldots, A(N-1,N))$ 
as in (\ref{eq:alphaform});
and $S$ by 
$\aalph(S) = (b_1,\ldots,b_N,B(1,2),\ldots, B(N-1,N))$.
To give a solution we give the scalars 
$a_1,\ldots,a_N$, $b_1,\ldots, b_N$ and matrices $A(i,j)$, $B(i,j)$ {for all $i< j$}.
 These depend on the relationship between the counties/nations that individuals $i<j$ reside in. 
Specifically:

Consider each individual $i$.
Let $n_s$ be the nation that $i$ resides in.
If $i$ is in 
the top county in  $n_s$  then 
$a_i=\alpha_s$. 
If $i$ is in 
the other county of $n_s$ then $a_i=\beta_s$.  
If $n_s$ comes from the first (resp. second) part 
of the pair $\lambda$ then the `sign' of $n_s$ 
is $\sgn(n_s) = +$  (resp. $-$).
If the sign is $+$
and $i$ is in the top county
then $b_i=1$; 
while if the sign of $n_s$ is $+$ and 
$i$ is in the second county then $b_i=-1$. 
If the sign of $n_s$ is $-$ then the cases 
are reversed.

Consider each pair of individuals $i<j$.
\\
(We proceed
as in Lem.\ref{lem:anof} but choosing the 
gauge/X-symmetry 
parameters $c_{s,t}=1$, adopting the principle that off-diagonal elements of $S$ are gauged to 1.)
\begin{enumerate}
    \item 
If $i\in n_s$ and $j \in n_t$ with $s\neq t$ then 
$A(i,j)=\begin{pmatrix} 0 & \mu_{s,t}/\Cc_{s,t}\\ \mu_{s,t}\Cc_{s,t} &0\end{pmatrix}$, 
and $B(i,j)=\begin{pmatrix} 0 & 1\\
1& 0
\end{pmatrix}$
--- here if $\Cc_{st}$ arises with $s>t$ 
{it is to be understood as $1/\Cc_{ts}$}.
\item 
If $i$ and $j$ are in the same nation $n_s$ but different counties 
with $i$ in the top county
then 
\footnote{Alternative version:
$A(i,j)=\begin{pmatrix} \alpha_s+\beta_s & \alpha_s\\ -\beta_s & 0\end{pmatrix}$ 
and $B(i,j)=\sgn{(n_s)}\begin{pmatrix} 0 & 1\\
1& 0 \end{pmatrix}$. 
{--- note this works but diverges from our gauge choice.}
}
$A(i,j)=\begin{pmatrix} \alpha_s+\beta_s & \sgn{(n_s)}\alpha_s\\ -\sgn{(n_s)}\beta_s & 0\end{pmatrix}$ 
and $B(i,j)=\begin{pmatrix} 0 & 1\\
1& 0
\end{pmatrix}$.
If $j$ is in the top county then 
$A(i,j)=
\begin{pmatrix} 0 & -\sgn{(n_s)}\beta_s
\\ \sgn{(n_s)}\alpha_s &   \alpha_s+\beta_s \end{pmatrix}$
and 
$B(i,j)=\begin{pmatrix} 0 & 1\\1& 0\end{pmatrix}$.
\item 
If $i$ and $j$ are both in the top, 
respectively bottom, county 
 in $n_s$  
then 
$A(i,j)=\begin{pmatrix} \alpha_s &0\\0&\alpha_s\end{pmatrix} $,
respectively
$A(i,j)=\begin{pmatrix} \beta_s &0\\0&\beta_s\end{pmatrix} $;
and $B(i,j)=\sgn{(n_s)}\begin{pmatrix} 1 & 0\\
0& 1
\end{pmatrix}$.
\end{enumerate}

\newcommand{\typo}[1]{\smat \alpha_{#1} &0\\0&\alpha_{#1}\stam}
\newcommand{\typa}[1]{\smat \alpha_{#1}+\beta_{#1} & \alpha_{#1}\\ -\beta_{#1} & 0 \stam}
\newcommand{\typax}[1]{\smat 0 & -\beta_{#1}\\ \alpha_{#1} & \alpha_{#1}+\beta_{#1} \stam}
\newcommand{\typay}[1]{\smat \alpha_{#1}+\beta_{#1} & -\alpha_{#1}\\ \beta_{#1} & 0 \stam}
\newcommand{\typs}[1]{\smat  0 & \mu_{#1}/C_{#1} \\ \mu_{#1} C_{#1} & 0 \stam}
\newcommand{\Styps}{
\smat 0 & 1\\ 1 & 0 \stam}

\mdef  \label{pa:123}    Example.
For $
\left( \bu{1&2\\3} ,\emptyset \right)$
we  have
\[
\aalph(R) =  \left( \alpha_1, \alpha_1, \beta_1, 
\typo{1} , 
\typa{1} , 
\typa{1} 
\right)
\]
\[
\aalph(S) =  \left( 1, 1, -1, 
\begin{pmatrix} 1 &0\\0&1\end{pmatrix},
\begin{pmatrix} 0 & 1\\ 1 & 0 \end{pmatrix},
\begin{pmatrix} 0 & 1\\ 1 &   0 \end{pmatrix}
\right)
\]
(RRR, SSS, RSS, RRS all hold and, as desired, 
SRR does not - these are somewhat large but routine calculations; we omit the details, but see \cite{N3aa0checknew.mw}).

\mdef  \label{exa:f23}
Example.
For $ 
\left( \bu{1&3\\2} ,\emptyset \right)$
we  have 
\[
\aalph(R) = \left( \alpha_1, \beta_1, \alpha_1, 
\typa{1},
\typo{1},
\typax{1} 
\right)
\]
\[
\aalph(S) =  \left( 1, -1, 1, 
\begin{pmatrix} 0 & 1\\ 1 & 0 \end{pmatrix},
\begin{pmatrix} 1 &0\\0&1\end{pmatrix},
\begin{pmatrix} 0 & 1\\ 1 &   0 \end{pmatrix}
\right)
\]
Observe that this is also 
(\ref{pa:123}) with $f^{(23)}$ with $(23)\in\Sym_3$ applied, cf. (\ref{eq:fsig}).
So again RRR, SSS, RSS, RRS all hold and, as desired, 
SRR does not, without further checking.

\mdef Non-Example.
For $ 
\left( \bu{1&3\\2} ,\emptyset \right)$
we {\em do not} have a solution taking 
\[
\aalph(R) =  (\alpha_1, \beta_1, \alpha_1, 
\typa{1},
\typo{1},
\typa{1}
)
\]
\[
\aalph(S) =  \left( 1, -1, 1, 
\begin{pmatrix} 0 & 1\\ 1 & 0 \end{pmatrix},
\begin{pmatrix} 1 &0\\0&1\end{pmatrix},
\begin{pmatrix} 0 & \pm1\\ \pm1 &   0 \end{pmatrix}
\right)
\]
(which are riffs on a first draft recipe, but both sign versions 
are 
checked as RRR:True; SSS:True; RSS:False!
in 
\cite{N3aa0checknew.mw}).

\mdef Example.
For $ \left(\emptyset, \bu{1&2\\3}  \right)$ we have:
\[
\aalph(R) =  \left( \alpha_1, \alpha_1, \beta_1, 
\typo{1},
\typay{1},
\typay{1} 
\right)
\]
\[
\aalph(S) =  \left( -1, -1, 1, 
\begin{pmatrix} -1 &0\\0&-1\end{pmatrix},
\begin{pmatrix} 0 & 1\\ 1 & 0 \end{pmatrix},
\begin{pmatrix} 0 & 1\\ 1 &   0 \end{pmatrix}
\right)
\]

\mdef Cautionary Example.
For $ \left(\emptyset, \bu{1&2\\3}  \right)$
we {\em do not} have
\[
\aalph(R) =  (\alpha_1, \alpha_1, \beta_1, 
\typo{1},
\typa{1},
\typa{1} 
)
\]
\[
\aalph(S) =  \left( -1, -1, 1, 
\begin{pmatrix} -1 &0\\0&-1\end{pmatrix},
\begin{pmatrix} 0 & 1\\ 1 & 0 \end{pmatrix},
\begin{pmatrix} 0 & 1\\ 1 &   0 \end{pmatrix}
\right)
\]
(RRR, SSS, RSS, all True; but RRS 
false with these signs - see \cite{N3aa0checknew.mw}. 
Of course it will work with 
all off-diagonals $-$ in $S$, since this is just $-S$ from 
(\ref{pa:123}), but this does not adhere to our gauge
choice).

\mdef  \label{exa:f13}
Example. For  
$\lambda= \left( \bu{1 \\ 2} , \;\bu{3} \right)$ we have 
\[
\aalph(R) =  (\alpha_1, \beta_1, \alpha_2, 
\typa{1},
\typs{12},
\typs{12} 
)
\]
\[
\aalph(S) =  \left( 1, -1, -1, 
\begin{pmatrix} 0 & 1\\ 1 & 0 \end{pmatrix},
\begin{pmatrix} 0 & 1\\ 1 & 0 \end{pmatrix},
\begin{pmatrix} 0 & 1\\ 1 & 0 \end{pmatrix}
\right)
\]
(RRR, SSS, RSS, RRS all True). 

\mdef For 
$ \left( \bu{3 \\ 2} , \;\bu{1} \right)$ we have: 
\[
\aalph(R) =  (\alpha_2, \beta_1, \alpha_1, 
\typs{21},
\typs{21},
\typax{1} 
)
\]
\[
\aalph(S) =  
\left( -1, -1, 1,  \Styps, \Styps, \Styps \right)
\]
so $\RR((13).\lambda)=f^{(13)}\RR(\lambda)$,
so this works.
$\;$ 
Meanwhile for 
$ \left( \bu{1 \\ 3} , \;\bu{2} \right)$ we have: 
\[
\aalph(R) =  (\alpha_2, \beta_1, \alpha_1, 
\typs{12},
\typa{1},
\typs{21} 
)
\]
\[
\aalph(S) =  
\left( 1, -1, -1,  \Styps, \Styps, \Styps \right)
\]
so $\RR((23).\lambda)=f^{(23)}\RR(\lambda)$.

\mdef\label{par:permcounties}
{ Note from the construction of $\SSSS_N$ that counties are unordered sets, thus permuting the indices within counties does not change the element of $\SSSS_N$. It is straightforward to see from the recipe that the map $\RR$ is well defined with respect to such a permutation.}

\subsection{Index sets for \texorpdfstring{$\Sym_N$}{SymN}-orbits of solution varieties} \label{ss:J}

{Here we give the construction of the set $J^{\pm}_N$ (the set of shapes from (\ref{pa:shapes})). There is an injection of $J^{\pm}_N$ into $\SSSS_N$, thus there are again  varieties of pairs $(S,R)\in \Match^N(2,2) \times \Match^N(2,2)$ associated to each member of $J^{\pm}_N$. We will show in {Theorem~\ref{th:main}} that, up to $\Sym_N$ symmetry, this subset is sufficient to give all charge conserving loop braid representations.}

We start by codifying the set of shapes. 

\mdef 
Recall that a multiset on a set $S$ is  a function 
$f:S \rightarrow \N_0$ (assigning a multiplicity to each element);
and that if $d:S\rightarrow \N$ is a degree function on
$S$, then $J_N(S)$ is the set of multisets $f$ that have
total degree $\sum_{s\in S} f(s) d(s) = N$.

A {\em signed} multiset of degree $N$ is equivalent to an
ordered pair
of multisets of total degree $N$
(the first multiset gives the multiplicities of objects signed +; and the second signed -).
Let us write 
$
J_{P,M}(S) = J_P(S) \times J_M(S)
$ 
for the set of pairs of multisets as indicated. 
So the set of pairs whose total degree is $N$ is 
$J_N^{\pm}(S) = \sqcup_P J_{P,N-P}(S)$.

\mdef   \label{pa:516}
As we will see, our 
indexing combinatorial objects 
at rank $N$ are 
signed multisets of compositions into at-most two parts
of total degree $N$.

Write $\Lambda^2$ for the set of  
at-most two-part compositions
--- equivalently the set 
\beq \label{eq:Lam2} 
\N \times \N_0 = 
\{ (1,0), \;
(2,0),(1,1), \;
(3,0),(2,1),(1,2),  \;
(4,0),(3,1),(2,2),(1,3), \; 
...  \}
\eq 
\[
= \{
\square ,\; \square\!\square ,  
\oneone,\; 
\square\!\square\!\square ,  \twoone , \onetwo , \;
\square\!\square\!\square\!\square , ...
\}
\]

Note that the number of elements of 
$\Lambda^2 \cong \N\times\N_0$ of degree $N$ is $N$;
and note the total order indicated by (\ref{eq:Lam2}). 
Here write just $J_N$ for $J_N(\N\times\N_0)$.
The ordinary multisets are 
$J_1 = \{ \one^1 \} $
;  $\;$ 
$$
J_2 = 
\{ \one^2 , \two^1 , \oneone^1 \} 
; \; 
\hspace{1in} 
J_3 
 =\; \{\;  \one^3 ,\;\; 
 \one^1 \two^1 , \; \one^1 \oneone^1 ,  \;\;
 \three^1 ,\; \twoone^1 , \onetwo^1 \; \}
$$ 
\[
 J_4 = \{ \one^4, \one^2\two^1, \one^2 \oneone^1,
 \one^1 \three^1, \one^1 \twoone^1, \one^1 \onetwo^1,
 \two^2, \two^1\;\oneone^{1}, \oneone^2 , 
\four^1 , \threeone^1, 
(2,2)^1, 
(1,3)^1   \} 
\]
and so on,
writing $\lambda^t$ to indicate for a given function that 
$f(\lambda )=t$, with other multiplicities 0.
 
\medskip

\mdef  \label{exa:J}
 For the signed versions, $J^\pm_1 = J_{1,0}\cup J_{0,1}$ where:
 $J_{1,0} = \{  
 (\one^1,) \}$ and 
 $J_{0,1}  = \{ 
 (,\one^1 ) \}$.
 Then $J_2^{\pm} =J_{2,0} \cup J_{1,1} \cup J_{0,2}$
 where 
 $$
 J_{2,0} = \{ 
 (\one^2 ,), (\two^1 , ) , (\oneone^1 ,) \},
 \hspace{.21in}
 J_{1,1} = \{ 
 (\one^1 , \one^1 ) \} , 
\hspace{.21in}
 J_{0,2} = \{ 
 (,\one^2) , (,\two^1), (,\oneone^1 ) \}
 $$
 
 Next 
 $
 J_3^{\pm} = J_{3,0} \cup J_{2,1} \cup J_{1,2} \cup J_{0,3}$ with 
 \[
 J_{3,0} = \{ 
 (\one^3,), (\one^1 \two^1,), (\one^1 \oneone^1,),
 (\three^1,), (\twoone^1,), (\onetwo^1,) \} , 
\]
\[
 J_{2,1} = \{ 
 (\one^2,\one^1), (\two^1,\one^1), (\oneone^1,\one^1) \}
 ,
 \hspace{1cm}
 J_{1,2} = \{ 
(\one^1,\one^2), (\one^1,\two^1), (\one^1,\oneone^1)
\} ,
\]
$$
J_{0,3} = \{ (,\one^3), (,\one^1 \two^1), (,\one^1 \oneone^1),
 (,\three^1), (,\twoone^1), (,\onetwo^1) \} .
 $$

Next $J_4^{\pm} = J_{4,0} \cup J_{3,1} \cup J_{2,2} \cup J_{1,3} \cup J_{0,4}$ with 
\[
J_{4,0} = 
\{ (\one^4,), (\one^2 \two^1,), (\one^2 \oneone^1,), 
(\one^1 \three^1,), (\one^1 \twoone^1,), 
(\one^1 \onetwo^1 ,) , (\two^2,), 
(\two^1 \oneone^1,),... \} 
\]
and so on.   

\subsection{Recipe constructing all representations
(up to symmetry)}  \label{ss:recipe}

{Here we give an injection of $J^{\pm}_N$ into $\SSSS_N$.}
 We thus give, via $\RR$, a construction for varieties of cc
loop braid representations up to $\Sym_N$ symmetry
({Theorem~\ref{th:main}} will show this is gives them all).

\mdef \label{pa:order}
Recall that a multiset 
$f:S \rightarrow \N_0$
can be considered as a set 
but where entries can be repeated, or indeed missing
(in this perception the repeats must somehow be given
individuality from each other,
for example if $f(s)=n$ then we have $n$ copies of $s$
individuated by $(s,1)$, $(s,2),\ldots,(s,n)$ perhaps).
Given an order on
the underlying set $S$ we can write   
the set form of $f$ 
as a sequence.
Individual terms that are repeated elements 
then receive `individuality' from their identical siblings 
by their order in the sequence.

\mdef Following on from (\ref{pa:order}), 
an ordered pair of multisets can then be seen  as 
an ordered pair of sequences 
(or, say, as a sequence where each 
same-type run is partitioned into two, 
but we will adopt the former organisation).

In this 
way $\lambda\in J^{\pm}_N$ gives an  
ordered set 
of nations (i.e. compositions) $n_1, \ldots, n_m$ ---
ordering first into the ordered pair $(+,-)$; 
then 
using 
the natural {ascending} order on nation sizes; 
and then at fixed size using 
the total order on $\N\times\N_0$ from 
(\ref{eq:Lam2}) {i.e., in ascending order by the size of the second county)}.
Finally the repeats of a given nation are of course
nominally indistinguishable, so simply ordered as written. 

\mdef Example: 
Suppose $f \in J_6$ is given by $f(\two)=2$ and $f(\oneone)=1$ and
others zero; and $g\in J_5$ by $g(\one)=3$ and 
$g(\two)=1$ and others zero. Then 
$\lambda=(f,g)\in J^{\pm}_{11}$ can be represented as 
\beq \label{eq:exa11}
\lambda = \; 
(\two \; \two \; \oneone, \; \one\;\one\;\one\;\two )
\eq 
Thus here, in this pair-of-sequences form, 
$n_1 = \two$, $n_2 = \two$, $n_3 = \oneone$,
$n_4 = \one$, $\ldots$, $n_7=\two$. 

\mdef \label{de:recJ} 
Given $\lambda\in J_N^{\pm}$ ordered as above we obtain a composition tableaux in $\SSSS_N$ by filling in the numbers in order, with the first $|n_1|$ numbers going into $n_1$ from left to right and then from top to bottom.  From the example above we see that (\ref{eq:exa11}) then becomes: 

\beq \label{eq:lamex}
\lambda \leadsto (\twoN{1}{2} \; \twoN{3}{4} \;
\oneoneN{5}{6} , 
\;\oneN{7} \; \oneN{8} \; \oneN{9} \; \twoNN{10}{11})
\eq 

This gives an injection of $J_N^{\pm}$ into $\SSSS_N$, so we may identify $\lambda\in J_N^{\pm}$ with the unique order preserving composition tableaux obtained as above, and abuse notation. Let $\JJJ_N$ denote the subset of $\SSSSC_N$ coming from the image of $J_N^{\pm}$, i.e., pairs $(\lambda,\xx)$
 with $\lambda\in J_N^\pm$.

 Now we may apply the recipe $\RR$ (\ref{de:RR}) to ${\JJJ_N}$ to obtain a solution.   We will see that, up to $X$-symmetry and $\Sigma_N$ symmetry, \emph{every} rank $N$ solution $(R,S)$ is obtained from an element of $\JJJ_N$.

\mdef See \ref{ss:AppC} for examples. 

\section{Main Theorem} \label{ss:MainTheorem}

\subsection{Prelude to the main Theorem: a key Lemma} \label{ss:N3s}

\mdef \mulem \label{lem:res2B}
Let $F: \{ \sigma, s \} \rightarrow \Match^N(2,2)$
be any pair as in (\ref{eq:Fss}). 
Then $F$ induces a functor 
$\F:\LL \rightarrow \Match^N$ 
if and only if 
every restriction to rank 2 is a functor
(i.e. belongs to the list in \ref{lem:anof});
and the restriction to $\Bcat$ is a functor
(i.e. every restriction to rank-3 belongs to the list in \ref{pr:9rule} up to symmetry - where we here use
Lemma~\ref{lem:restricto} to pass to rank-3).

\proof{
By Lemma~\ref{lem:restricto} 
(and Remark~\ref{rem:width 3}) 
it is enough to 
verify in case $N=3$. 
Thus it is enough to consider all extensions of the
set of functors $\F:\Bcat\rightarrow \Match^3$.
These are reproduced  in (\ref{pr:9rule}). 
A nominal superset of these is given by extending in all
nominally possible ways, 
according to \ref{lem:anof}, 
at each edge.
The complete enumeration of possibilities proceeds as 
in (\ref{pa:5rule}).
This finite set of groupings of cases is then verified by 
routine if lengthy
direct calculation (or see e.g. (\ref{pr:alternate})). 
\qed}

\newcommand{\proofff}{{\it Proof}. }

\mdef   \label{pr:alternate}
\proofff{(Alternate) 
We may verify explicitly by  
checking {that} all 
cubics in rank $3$ obtained from \eqref{eq:generalized braid}
with $z=Z$, and with $\zeta=Z$ (i.e. for \eqref{eq:sss}(II) and (III)) are satisfied for each matrix satisfying the assumptions of the Lemma.
It is assumed that each $\F$ is a representation of $\LL$ for every restriction to rank $2$, thus 
cubics containing 
a repeated index, e.g. $\vert iij\rangle$, are assumed to be satisfied, and we only need to check
equations in the permutation orbit of 
\eqref{eq:ijkbraidrel1}-\eqref{eq:ijkbraidrel5}.

\medskip

Recall that, as explained in \ref{par:KGraph}, rank $3$ solutions are represented by the complete graph $K_3$ with rank $2$ solutions attached to each edge.
We organise our sequence of checks by the number of restrictions of $S=F(s)$ to rank $2$ solutions that are $/$ edges.
The corresponding edges in $R=F(\sigma)$ are then of type $/$ or $\aaa$, with various conditions as in (\ref{pr:9rule}). We start with the case that no edges are $/$, then the case all edges are $/$. We then divide the remaining cases into the relations \eqref{eq:sss}(III) and \eqref{eq:sss}(III), which are each subdivided by the number of $/$ edges.
In each case we show that the permutation orbits of each of \eqref{eq:ijkbraidrel1}-\eqref{eq:ijkbraidrel5} are always satisfied.

Suppose first that all restrictions of $S$ and $R$ to rank $2$ solutions are in the zero varieties.
Then all off diagonal
entries are zero, it is thus immediate that all terms in the orbits of \eqref{eq:ijkbraidrel2}-\eqref{eq:ijkbraidrel5} go to zero.
For \eqref{eq:ijkbraidrel1}, notice that $\delta_{ij}D_{jk}d_{ij}=d_{jk}D_{ij}\delta_{jk}$, since, by considering the Lemma~\ref{lem:anof}, each matrix is a multiple of the identity matrix.

Now suppose all rank $2$ restrictions are of $/$ type, then all diagonal entries are $0$ and all equations trivialise.

\medskip

We now consider the relation \eqref{eq:sss}(III), so let $Z=\zeta$ in
\eqref{eq:ijkbraidrel1}-\eqref{eq:ijkbraidrel5}, in particular we use capital letters to refer to elements of $S$ and lower case to elements of $R$.
It is immediate that \eqref{eq:ijkbraidrel4} is trivial after this substitution. 
We then further replace lower case elements with greek letters to avoid confusion between this $c$ and $c$ denoting the gauge parameter in Lemma~\ref{lem:anof}.

Now suppose that $S$ restricts to two solutions in the $/$ variety, and one in the zero variety. 
By $\Sigma$ symmetry, we may assume that the solution in the zero variety lies on the $12$ edge.
First notice that $D_{ij}D_{jk}=0$ for all distinct $i,j,k$, since the $ij$ or the $jk$ edge is in the $/$ variety.
Thus, looking at \eqref{eq:ijkbraidrel1} we must have that, for all distinct $i,j,k,$ 
\[B_{ij}D_{ik}c_{ij}=b_{jk}D_{ik}C_{jk}.\]
If $ik$ is a $/$ edge, $D_{ik}=0$. Otherwise $ik$ is the $0$ edge, i.e. $ik\in \{1,2\}$, and in both cases the condition becomes $B_{13}\gamma_{13}=\gamma_{23}B_{23}$.
Let us now explain our notational convention: we use the parameter labels from Lemma~\ref{lem:anof} for each rank $2$ matrix, and, where necessary, add a subscript to indicate the edge.
Looking at (\ref{pr:9rule}), we have two cases.
The first is that the $13$ and $23$ edges in $R$ are $\aaa$-type, in which case all $A_i$ parameters from Lemma~\ref{lem:anof} are locked equal across the edges by (\ref{pr:9rule}), 
and the condition becomes $1/c_{13} (-c_{13}A_1) = (-c_{23}A_1)1/c_{23}$.
The second case is that the $13$ and $23$ edges in $R$ are $/$-type, 
in which case the $\mu$ and $C$ parameters from Lemma~\ref{lem:anof} are locked equal across the edges by (\ref{pr:9rule}), and the condition becomes $1/c_{13} (\mu C c_{13}) = (\mu Cc_{23})1/c_{23}$.

Using again that $D_{ij}D_{jk}=0$, \eqref{eq:ijkbraidrel2} becomes the condition $B_{ij}D_{ik}a_{ij}=d_{jk}B_{ij}D_{ik}$. If $ij$ is zero edge we are done, so suppose not.
This gives $D_{ik}a_{ij}=d_{jk}D_{ik}$. Both sides go to zero if $ik$ is a $/$, so suppose not. This means $i,k\in\{1,2\}$, and in both cases we have $a_{13}=a_{23}$, which is again true by observing that $/$ solutions are locked equal.

The arguments for \eqref{eq:ijkbraidrel3} and \eqref{eq:ijkbraidrel5} are similar.

\medskip

We now consider the relation \eqref{eq:sss}(II). We show that all cubics \eqref{eq:ijkbraidrel2}-\eqref{eq:ijkbraidrel5} are satisfied for the case $z=Z$, i.e. capital letters now refer to elements of $R$, and greek to elements of $S$.
Note we have already done the case that all restrictions of $S$ to rank $2$ solutions are in the zero variety.
Also \eqref{eq:ijkbraidrel5} becomes trivial.

Suppose $S$ restricts to two solutions in the $/$ variety, and one in the zero variety. As above, we may assume that this zero solution is on the $12$ edge.
We first prove that \eqref{eq:ijkbraidrel1} is always satisfied.

Suppose first that $ik$ is the zero edge, thus $\delta_{ij}=\delta_{jk}=0$.
Then \eqref{eq:ijkbraidrel1} becomes
$\beta_{ij}D_{ik}C_{ij}=B_{jk}D_{ik}\gamma_{jk}$.
 Since $ik$ is the zero edge, $D_{ik}\neq 0$, thus it is sufficient to observe that $\beta_{ij}C_{ij}=B_{jk}\gamma_{jk}$ becomes $\beta_{ij}C_{ij}=C_{kj}\beta_{kj}$ as $j>i,k$ and this is satisfied by the locking together of pairs of $/$ and $\aaa$ type solutions.

Now suppose $jk$ is the zero edge,
thus $\gamma_{jk}=\delta_{ij}=0$, and \eqref{eq:ijkbraidrel1} becomes 
$\beta_{ij}D_{ik}C_{ij}=D_{jk}D_{ij}\delta_{jk}$.
If restrictions of $R$ to the $ij$ and $ik$ solutions are in the $/$ variety, both sides become $0$.
For the $\aaa$ restrictions there are two cases, the first is $\gamma_{13}A_{23}B_{13}=D_{12}A_{13}\delta_{12}$, for which we have either $A_{23}=A_{13}=0$, or 
$c_{13} (A_1+A_2) A_1/c_{13} = A_1 (A_1+A_2) 1$.
These correspond to the two cases for the fourth triangle in (\ref{pa:5rule}). The other case is similar.

Finally for the case $ij$ is the zero edge, then $C_{ij}=0$, and, since $k=3$, $D_{jk}=D_{ik}=0$ and all terms in \eqref{eq:ijkbraidrel1} go to zero.
The arguments for \eqref{eq:ijkbraidrel2},\eqref{eq:ijkbraidrel3} and \eqref{eq:ijkbraidrel4} are similar.

Finally we consider the case that all edges of $S$ are of $/$ type, and $R$ has two edges of $/$ type and one of $a$ type. We will assume the $a$ type is the edge ${12}$.
Notice first that \eqref{eq:ijkbraidrel5} becomes trivial.

First consider \eqref{eq:ijkbraidrel1}. This becomes $\beta_{ij}D_{ik}C_{ij}=B_{jk}D_{ik}\gamma_{jk}$ 
Now either $D_{ik}$ is zero, or
$ik$ is the $\aaa$ edge, and we have 
$1/{c_{ij}}(A_1+A_2) (-c_{ij} \mu C) =\mu C c_{jk} (A_1+A_2) 1/c_{jk}$.
Equations \eqref{eq:ijkbraidrel2} and \eqref{eq:ijkbraidrel2} all go to zero by noting all terms contain either of diagonal elements of $S$, or pairs of diagonal elements from distinct edges of $R$.
Equation \eqref{eq:ijkbraidrel4} becomes $\beta_{ij}B_{ik}D_{jk}=D_{jk}B_{ij}\beta_{ik}$,
and we have either $D_{jk}=0$, or $jk$ is the $12$ edge and the condition becomes $\gamma_{ji}C_{ki}= C_{ji}\gamma_{ki}$ which follows from the locking together of pairs of $/$ edges in (\ref{pa:5rule}).
\qed}

\subsection{The main Theorem: statement and proof}

\newcommand{\AB}{A} 
\newcommand{\BA}{B} 

\begin{theorem}\label{th:main}
(\AB)
\\
(I) The construction $\RR(\lambda,\xx)$ gives a 
charge conserving
monoidal functor
$\FF : \LL \rightarrow \Match^N$ 
for every $ (\lambda,\xx) \in \SSSSC_N $. 
\\ 
(II) Every such
functor $\FF$ is in the orbit of some $\RR(\lambda,\xx)$ 
under the  X-symmetry (of  \ref{lem:X}).
\\
(\BA)
\\
(I) The construction $\RR(\lambda,\xx)$ gives a 
monoidal functor
$\FF : \LL \rightarrow \Match^N$ 
for every $ (\lambda,\xx) \in \JJJ_N $. 
\\
(II) Every such
functor is in the orbit of some $\RR(\lambda,\xx)$ 
under the $\Sigma_N$ and X-symmetries
(of \ref{lem:37III} and \ref{lem:X} respectively).
\end{theorem}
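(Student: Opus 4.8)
## Proof plan for Theorem~\ref{th:main}

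The plan is to deduce (B) from (A) almost formally, and to prove (A) by reducing everything to the rank-$2$ and rank-$3$ computations already in hand. For part (A)(I), I would argue as follows. A pair $(S,R)=\RR(\lambda,\xx)$ induces a functor iff all the anomalies for the relations (\ref{eq:s21}), (\ref{eq:sss}), (\ref{eq:sigmass}) vanish, and by Lemma~\ref{lem:restricto} together with Remark~\ref{rem:width 3} it suffices to check this on every restriction to a subset $\{i,j,k\}\subseteq\ul N$ (for the width-$3$ relations) and to every subset $\{i,j\}$ (for (\ref{eq:s21})). So the whole of (A)(I) comes down to: (a) every rank-$2$ restriction of $\RR(\lambda,\xx)$ lies in the list of Lemma~\ref{lem:anof}; and (b) every rank-$3$ restriction satisfies the hypotheses of Lemma~\ref{lem:res2B}, hence is a genuine functor. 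Both are verified by inspecting the recipe in (\ref{lamex1}): a rank-$2$ restriction to $\{i,j\}$ is governed by whether $i,j$ lie in the same county, different counties of one nation, or different nations, and in each of the three cases the matrices $A(i,j),B(i,j)$ produced by the recipe are literally one of the normal forms of Lemma~\ref{lem:anof} (type $0$, type $\aaa/\mai$, or type $/$ respectively), with the sign bookkeeping matching $\sgn(n_s)$ and the $c$-parameter set to $1$. For a rank-$3$ restriction to $\{i,j,k\}$ one then checks the ``$9$-rule'' compatibility of (\ref{pr:9rule}): the point is that whenever two of the three vertices share a county the recipe forces their $a$-eigenvalues and $b$-signs to agree, whenever they lie in different counties of one nation the signs are forced opposite, and whenever they lie in distinct nations the $\mu_{s,t},C_{s,t}$ parameters are shared along both relevant edges — exactly the locking conditions that (\ref{pr:9rule}) / (\ref{pa:5rule}) demand. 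Hence every rank-$3$ restriction is in the admissible list and Lemma~\ref{lem:res2B} applies.

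For part (A)(II) — that every charge-conserving functor $\FF$ arises as $\RR(\lambda,\xx)$ up to X-symmetry — I would run the converse of the above. Given $\FF$, write $R=\FF(\sigma)$, $S=\FF(s)$ and look at the complete graph $K_N$ decorated by the rank-$2$ restrictions. By Lemma~\ref{lem:res2B} each edge restriction is one of the three types in Lemma~\ref{lem:anof}; since $S$ restricted to $\Bcat$ is itself a (cc) representation of the symmetric groups, the $S$-type of each edge is $0$ or $/$, and correspondingly the $R$-type is $0$, $\aaa/\mai$ or $/$. Now apply the X-Lemma (\ref{lem:X}) to normalise: there is a diagonal $X$ so that in $F^X$ all nonzero off-diagonal entries of $S$ are $1$ (this is (P.IV), and one checks the vertex choices on $K_N$ are consistent because the complete graph is connected and the per-edge gauge freedom is exactly a scalar per vertex; the potential obstruction — a cocycle condition around triangles — is killed precisely by the rank-$3$ admissibility already established). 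With $S$ so normalised, declare $i\sim_0 j$ when the $\{i,j\}$ edge has $S$-type $0$; by the rank-$3$ rules recalled in \S\ref{ss:Brecall} (``same county $\Rightarrow$ same sign; different county same nation $\Rightarrow$ opposite sign'') this is an equivalence relation whose classes are the counties, and two counties are ``in the same nation'' exactly when the connecting edges are of type $\aaa/\mai$; this recovers the partition of $\ul N$ into nations and counties, i.e. an element $\lambda\in\SSSS_N$, together with signs $\sgn(n_s)$ read off from the $b_i$. Reading the eigenvalue data off the vertices gives $\alpha_s,\beta_s$ (with $\alpha_s+\beta_s\neq0$ automatic from the type constraints), and the off-diagonal $2\times2$ blocks on inter-nation edges give $\mu_{s,t},C_{s,t}$; Lemma~\ref{lem:anof} guarantees these data are consistent along every edge (the $c=1$ normalisation forces $B=A_1$, $C=-A_2$ etc., so there is nothing left free beyond $\alpha,\beta,\mu,C$). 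One then checks $F^X=\RR(\lambda,\xx)$ for this $(\lambda,\xx)$ by comparing the recipe (\ref{lamex1}) entry-by-entry with the normalised $(S,R)$; they agree by construction, using (\ref{par:permcounties}) to absorb the freedom of ordering individuals within a county.

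Part (B) is then immediate: (B)(I) is the restriction of (A)(I) to $\JJJ_N\subseteq\SSSSC_N$ via the injection $J_N^\pm\hookrightarrow\SSSS_N$ of (\ref{de:recJ}). For (B)(II), start from (A)(II) to get $\FF$ in the X-orbit of some $\RR(\lambda,\xx)$ with $\lambda\in\SSSS_N$; by (\ref{pa:shapes}) the $\Sym_N$-orbit of $\lambda$ is determined by its shape, and the order-preserving filling of that shape constructed in (\ref{de:recJ}) is the canonical representative lying in the image of $J_N^\pm$. Applying the corresponding $g\in\Sym_N$ via the functor $f^g$ of Lemma~\ref{lem:37III} — and using that $\RR$ is $\Sym_N$-equivariant, $\RR(g.\lambda,g.\xx)=f^g\RR(\lambda,\xx)$, as verified on examples in \S\ref{ss:recipe1} and in general directly from the recipe — moves $\RR(\lambda,\xx)$ to an $\RR(\lambda',\xx')$ with $(\lambda',\xx')\in\JJJ_N$, as required.

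The main obstacle I anticipate is none of the individual verifications — those are the ``routine if lengthy'' calculations flagged after Lemma~\ref{lem:res2B} — but rather the bookkeeping in (A)(II): showing that the locally-defined county/nation/sign data glue into a single globally consistent $\lambda\in\SSSS_N$, and that the X-normalisation can be performed simultaneously at all vertices. Both hinge on the connectivity of $K_N$ and on the triangle (rank-$3$) compatibilities, so the argument must quote Lemma~\ref{lem:res2B} carefully rather than re-deriving it; getting the ``vertex signs are unconstrained between different nations'' non-rule to interact correctly with the global gauge choice is the delicate point.
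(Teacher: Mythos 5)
Your overall plan is essentially the paper's: part (I) by observing that the recipe restricts on every pair to a form in Lemma~\ref{lem:anof} and on the $\sigma$-side to the braid recipe of \cite{MR1X}, then invoking Lemma~\ref{lem:res2B} (via Lemma~\ref{lem:restricto}/Remark~\ref{rem:width 3}); part (II) by interrogating $(R,S)$ to recover nations, counties, signs and parameters and checking the result is X-equivalent to the recipe's output; part (B) from $\Sym_N$-equivariance of the construction together with the fact that $\JJJ_N$ is a transversal of $\SSSS_N$ under the dot action. The paper packages the reconstruction in (A)(II) through the pseudo-inverse $\partt(R)$ of \cite{MR1X} (so the $R$-side is already correct up to gauge) and then checks the remaining gauge bookkeeping on representative cases, whereas you re-derive the county/nation structure from the edge types; this is a presentational difference, not a mathematical one.

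One step, however, is justified incorrectly. You write that the X-normalisation of $S$ (all nonzero off-diagonal entries set to $1$) requires a consistency check ``because the per-edge gauge freedom is exactly a scalar per vertex'', with a triangle cocycle obstruction ``killed by rank-3 admissibility''. That is not how the X-symmetry works: in Lemma~\ref{lem:X} the matrix $X$ is an arbitrary invertible diagonal element of $\Mat^N(2,2)$, i.e.\ it has $N^2$ independent diagonal entries, so conjugation rescales the off-diagonal pair on each edge by an \emph{independent} factor $m_{ij}$ ($b_{ij}\mapsto m_{ij}b_{ij}$, $c_{ij}\mapsto c_{ij}/m_{ij}$, as in the Appendix proof of Lemma~\ref{lem:zZz}). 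There is no per-vertex constraint and hence no cocycle condition at all; and if the gauge really were of the form $x\otimes x$ for a diagonal $N\times N$ matrix $x$, rank-3 admissibility would \emph{not} rescue you, since the $c_{ij}$ of Lemma~\ref{lem:anof} are genuinely independent edge parameters and nothing forces a multiplicative relation such as $c_{12}c_{23}=c_{13}$. The conclusion you need is true (it is (P.IV) in (\ref{pa:00}), and since $S^2=1$ forces $b_{ij}c_{ij}=1$ on each $/$-edge, the choice $m_{ij}=c_{ij}$ normalises both entries simultaneously, edge by edge), so the gap is repairable by simply citing (P.IV) or the above one-line computation; but as written the justification misidentifies the symmetry group on which the whole of (A)(II) rests, so it should be corrected before the argument is fleshed out.
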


\proof
(A,B)
(I) Observe that the construction $\RR$ yields 
{\em by}{ construction} a solution on every $ij$
subspace 
(compare with Lemma~\ref{lem:anof});
and a braid representation
(compare with the recipe $\Rec$ in \cite{MR1X}). 
Thus it yields a solution by Lem.\ref{lem:res2B}. 
\\
(A,B) (II) See \S\ref{ss:BII} and \S\ref{ss:AII}.

\subsection{(\AB II) Proof }  \label{ss:BII}

\mdef \label{de:funcat}
As in \cite{MR1X} we write $\functor(\LL,\Match^N)$ for
the category of (monoidal) functors.
Let us write simply $(\LL,\Match^N)$ for the object set.
Consider an arbitrary such functor $\FF$, 
and hence the pair 
$S=\FF(s)$, $R=\FF(\sigma)$. 
We will determine the 
$(\lambda,\xx) \in  
\SSSSC_N$
such that $\RR(\lambda,\xx) \equiv \FF$ up to gauge choice. 
We do this below by interrogating $R,S$ for a suitable $\lambda$; and then further for a suitable $\xx$.
(This parallels the $\Bcat$ case, where we interrogate $R$ for a suitable $\lambda\in\SSS_N$ and so on.

\mdef  \label{pa:65}
Recall from 
(\ref{de:SSSN})
(or \cite[(4.4)]{MR1X}) that $\SSS_N$ is the set of all 
two-coloured multi-tableau 
on $N$ boxes.
Recall that $\Rec : \SSS_N \rightarrow \Match^N(2,2)$ is
the recipe constructing varieties of braid representations from $\SSS_N$.

Let
$R=\FF(\sigma)$
be any braid representation.
Observe from \cite[(4.7) and \S6.3]{MR1X} that 
the element of $\SSS_N$ associated to $R$ is given by
$(\partp(R), \partq(R),\partr(R), \parts(R))$
(all component functions as defined in \cite{MR1X}). 
Let us now write $\partt(R) $ for this element.
That is, $\Rec(\partt(R))$ is the variety containing 
$R$ up to gauge
(in this way it was shown that $\Rec$ constructs all 
representations). 

We will 
modify the 
pseudo-inverse function $\partt$ for our case. 

\mdef  \label{pa:538}
In our case 
firstly note that since, by Lem.\ref{lem:anof}(II), 
there are no $\pfii$ edges 
(still in the sense of restricting to the braid solution part)
the `colour' 
must differ between {\em every} pair of counties in each nation
of $\partt(R)$.
There are two colours, so this   
forces that there are at most two counties in 
each nation, always of different colour. 
- So, note, we do not need to further record colours. 
This $\partt(R)$ 
can be realised as 
a set of composition tableaux
--- tableaux where order in a row does not matter
(so we can use natural 
ascending order for free),
and order between rows does matter. 
For examples:
\beq \label{eq:9X3}
\ytableausetup{smalltableaux} 
\begin{ytableau} 9&X \\ 3  \end{ytableau} \;\;\;
\begin{ytableau} 6&8 \\ 1&2&4  \end{ytableau} \;\;\;
\begin{ytableau} 5 \\ 7  \end{ytableau} 
\eq
\[ \ytableausetup{smalltableaux} 
\begin{ytableau} 9&X \\ 3  \end{ytableau} \;\;\;
\begin{ytableau} 6&8 \\ 1  \end{ytableau} \;\;\;
\begin{ytableau} 5 \\ 2&4&7  \end{ytableau} 
\]
In such  
representations as these 
the nations also appear {\em arranged} as a sequence rather
than just a set, but so far this is arbitrary,
cf. (\ref{pa:order}).

\mdef 
Recall from (\ref{pa:SSSSN}) that $\SSSS_N$ denotes 
the set of partitions of 
such sets 
into an ordered pair of subsets, restricting to
the case of compositions with at most two parts
(two parts of different colours, in the original $\SSS_N$ idiom).

\mdef 
We define $\partrr(R,S)$ as the partition of 
$\partt(R)$ into an ordered pair of subsets, 
with nation $s$ in the first part if 
$\bra{ii} S \ket{ii} =+1$ for $i$ the individual in
the top left box of nation $s$.

\mdef
For example perhaps:
\beq \label{eq:9X3''}
\ytableausetup{smalltableaux} 
\partrr(R,S) =
\left( \; 
\begin{ytableau} 5 \\ 7  \end{ytableau} \;\;\;
\begin{ytableau} 9&X \\ 3  \end{ytableau} \;\;\;
, \;\;
\begin{ytableau} 6&8 \\ 1&2&4  \end{ytableau} \;\;\;
\right) 
\eq

For an example of large enough rank to be generic 
(but still neat to typeset!) we
use $\{a,b,c,...\}$ instead of $\{1,2,3,...\}$:
\[
\ytableausetup{smalltableaux} 
\left( \; 
\begin{ytableau} f  \end{ytableau} \;\;\;
\begin{ytableau} x  \end{ytableau} \;\;\;
\begin{ytableau} z  \end{ytableau} \;\;\;
\begin{ytableau} e&y \\ b  \end{ytableau} \;\;\;
\begin{ytableau} r \\ u&v  \end{ytableau} \;\;\;
\begin{ytableau} k&p \\ d&l  \end{ytableau} \;\;\;
, \; 
\begin{ytableau} s&t&w  \end{ytableau} \;\;\;
\begin{ytableau} c&i \\ j  \end{ytableau} \;\;\;
\begin{ytableau} n&o \\ h  \end{ytableau} \;\;\;
\begin{ytableau} q \\ a&g&m  \end{ytableau} \;\;\;
\right)
\]

\newcommand{\chix}{\ul{\chi}}

\mdef 
Having interrogated $R,S$ for a $\lambda\in\SSSS_N$, we now interrogate for an $\xx\in\C^\lambda$. 

Associated to each nation $n_s$, say, of 
$\alphaa = \partt(R)$ there are 
$\alpha_s$ and $\beta_s$ 
values that can be read off from $R$
---  inspecting $\bra{ii} R \ket{ii}$ gives
either $\alpha_s$ or $\beta_s$ for $i$'s nation,
depending on {whether $i$ is in the} first or second row.
Observe that $i,i'$ in the same row have a type-0 edge between them here, so the procedure is well-defined without specifying $i$ further.

Associated to each  
pair {$n_s,n_t$}  
of nations, with $s<t$, there is a 
$\mu_{st}$ parameter value, given by square root of product of
off-diagonals {in $R$} between any $i$ in $s$ and $j$ in $t$.
Observe again well-definedness. 

Continuing with this $n_s,n_t$,
we can also read off the value of $C_{st} c_{ij}$.
This does depend on $i,j$, but 
the value of $c_{ij}$ is determined from the $ij$ 
off-diagonals in $S$, i.e. $\bra{ij} S \ket{ji}$, 
so we can  determine $\Cc_{st}$.
Note that we can 
apply X-symmetry to $\FF$ 
to make $c_{ij}=1$ in all cases.
{To be   precise, 
in case
$i<j$, 
\begin{align}\label{eq:cijCst}
c_{ij}=
\bra{ii} S \ket{ii}
\bra{ji} S \ket{ij}, \;\;
C_{st}=\frac{\bra{ii} S \ket{ii} \bra{ji} R \ket{ij}}{c_{ij}\mu_{st}}
\end{align}
(recall that $\bra{ji} S \ket{ij}$, for example, is the bottom left entry in the $2\times 2$ matrix corresponding to the $12$ edge).
If instead $j< i$ then 
\begin{align}\label{eq:cijCstflip}
    c_{ij}=\frac{1}{
\bra{ii} S \ket{ii}\bra{ji} S \ket{ij}}, \; \; C_{st}=\frac{c_{ij}\mu_{st}}{\bra{ii} S \ket{ii}\bra{ji} R \ket{ij}}.
\end{align} }
{Observe that the operator spectrum depends on $C$ but is invariant under $C \leftrightarrow 1/C$.}

Write $\chix(R,S)$ for this collection of parameters.
Note that there is potential ambiguity in the organisation of the collection, resolved here by the total order on nations, as in (\ref{de:SSSN}).

\mdef
We claim that 
$\RR((\partrr(R,S),\chix(R,S)))$
is gauge-equivalent to $\FF$.
(N.B. This implies (\AB II).)

To see this first note that the restriction to $R$ is
correct up to gauge by (\ref{pa:65}). 
\\
A couple of examples will suffice to check that no
new gauge issues arise.
\\
1. Suppose $\partrr(R,S) =\left(  \bu{1},\bu{2\\3} \right)$. 
Necessarily then $\chix(R,S)$ gives values for two nations, thus $\alpha_1, \alpha_2, \beta_2, \mu_{12}, C_{12}, c_{12}, c_{13}, c_{23}$.
{For all pairs $i,j$ we are in case \eqref{eq:cijCst}, thus we}
 know from this and Lem.\ref{lem:anof} that $S$ takes the form
\[
\aalph(S) = (1,-1,1,\slush{12},\slush{13},-\slush{23})
\]
for some $c_{12}, c_{13}, c_{23}$,
with $\bra{33} S \ket{33}=1$ forced. 
Note that Lem.\ref{lem:anof} puts $-c_{23}$ here, since $\bra{22} S \ket{22}=-1$.
And given this $S$ then
$R$ must  be 
\[
\aalph(R) = \left( \alpha_1, \alpha_2, \beta_2,
\slesh{12},\sleesh{12}{13}, 
\smat \alpha_2+\beta_2 &\alpha_2 /c_{23} \\ -c_{23}\beta_2 & 0 \stam 
\right)
\]
It follows from  
(\ref{de:RR}) that the $\RR$ image takes the form 
\[
\hspace{-.25in} 
\aalph(F(s)) = \left( 1,-1,1,\smat 0&1\\1&0\stam,\smat 0&1\\1&0\stam,\smat 0&1\\1&0\stam \right),
\hspace{.51cm}
\aalph(F(\sigma))= \left( \alpha_1, \alpha_2, \beta_2,
\slosh{12},\slosh{12}, 
\smat \alpha_2 +\beta_2 &-\alpha_2 \\ \beta_2 &0 \stam 
\right)
\]
Observe that this is indeed 
in the X-orbit of $(R,S)$, by putting $c_{12}=c_{13}=1$, $c_{23}=-1$.
\\ 

\noindent 
2. Suppose $\partrr(R,S) =\left(  \bu{1&3},\bu{2} \right)$. 
Necessarily then $\chix(R,S)$ gives values for two nations, thus $\alpha_1, \alpha_2, \mu_{12}, C_{12}, c_{12}, c_{23}$. 
{Here we are in case \eqref{eq:cijCstflip} when $i=3, j=2$, and case \eqref{eq:cijCst} otherwise,} using this together with
Lem.\ref{lem:anof} we know that $S$ takes the form
\[
\aalph(S) = (1,-1,1,\slush{12},
\begin{pmatrix}
    1 &\\
    & 1
\end{pmatrix}
,-\begin{pmatrix}
    0 & c_{23}\\
    \frac{1}{c_{23}}& 0
\end{pmatrix})
\]
for some $c_{12}, c_{23}$,
with $\bra{33} S \ket{33}=1$ forced. 
Note that Lem.\ref{lem:anof} puts $-c_{23}$ here, since $\bra{22} S \ket{22}=-1$.
And given this $S$ then
$R$ must  be 
\[
\aalph(R) = \left( \alpha_1, \alpha_2, \alpha_1,
\slesh{12},
\begin{pmatrix}
    \alpha_1 &\\
    &\alpha_1
\end{pmatrix},
-\begin{pmatrix}
    0 & \mu_{12}c_{23}C_{12}\\
    \frac{\mu{12}}{c_{23}C_{12}} & 0
\end{pmatrix}
\right)
\]
It follows from  
(\ref{de:RR}) that the $\RR$ image takes the form
\[
\hspace{-.25in} 
\aalph(F(s)) = \left( 1,-1,1,
\smat 0&1\\1&0\stam,
\smat 1&0\\0&1\stam,
\smat 0&1\\1&0\stam \right),\]
\hspace{.51cm}\[
\aalph(F(\sigma))= \left( \alpha_1, \alpha_2, \alpha_1,
\slosh{12},
\begin{pmatrix}
    \alpha_1 &\\
    &\alpha_1
\end{pmatrix},
\begin{pmatrix}
    0 & C_{12}\mu_{12}\\
    \frac{\mu_{12}}{C_{12}}& 0
\end{pmatrix}
\right)
\]
Observe that this is indeed 
in the X-orbit of $(R,S)$, by putting $c_{12}=1$, $c_{23}=-1$.
\medskip 
\\ 
4. Suppose 
$\partrr(R,S) = (\square^3,\emptyset)$.
We know from Lem.\ref{lem:anof} that $S$ takes the form 
\[
\aalph(S) = (1,1,1,\slush{12},\slush{13},\slush{23})
\]
for some $c_{12}, c_{13}, c_{23}$. And given this then
$R$ must in particular be 
\[
\aalph(R) = \left( A_1, A_2, A_3,
\slesh{12},\slesh{13},\slesh{23} \right)
\]
for some values of the parameters.
In this case it follows from (\ref{de:RR}) that 
the $\RR$ image takes the form
\[
\hspace{-.25in} 
\aalph(F(s)) = \left( 1,1,1,\smat 0&1\\1&0\stam,\smat 0&1\\1&0\stam,\smat 0&1\\1&0\stam \right),\]
\hspace{.51cm}\[
\aalph(F(\sigma))= \left( A_1, A_2, A_3,
\slosh{12},\slosh{13},\slosh{23}  \right)
\]
This is in the X-orbit taking $c_{12}=c_{13}=c_{23}=1$.
\\
3. Suppose $\partrr(R,S) = \left(\bu{1&2} , \bu{3}\right)$.
The main new observation here is that $\bra{22} S \ket{22} =1$ by the nature of 0-edges.
So again the recipe agrees up to gauge.
\qed

\medskip \vspace{.1in} 

\subsection{(\BA II) Proof}   \label{ss:AII}

Let us write the action of $\Sym_N$ on 
$\lambda\in\SSSS_N$  
simply permuting the entries 
as $w.\lambda$.
For (\BA II) it is enough to show that the $f^w$ (with notation as in \ref{lem:37III}) and naive actions agree on $\SSSS_N$,
since $\JJJ_N$ is clearly a transversal of $\SSSS_N$
under the dot action.

\mdef   \label{lem:613}
We claim that
$ w.\partrr(\FF) = \partrr ( f^w \FF) $.
\\
(Note that the identity is one of varieties, not of 
individual solutions. We can see from the construction
that the size of the variety is not affected by $w$.) 

\proof  
Let us work through some specific kinds of cases.
We will be done when we have verified the action of 
(generating) elementary transpositions across the 
various contexts that arise. Several cases have
already been treated in (\ref{exa:f23}) {\it et seq}, so 
we continue from there. 
Note that the corresponding result connecting 
$\SSS_N$ and $\TTT_N$ is established in \cite{MR1X}.
\\
We must consider $f^{(ij)} \FF$ under the following scenarios:
\\
1. $i,j$ in the same county;
\\
2. $i,j$ in different counties in the same nation;
\\
3. $i,j$ in different nations with the same sign;
\\
4. $i,j$ in different nations with different sign.
\\
For case-1 \soutx{it will be clear that} both actions are trivial \ft(see also \ref{par:permcounties}).
\\
For case-2,  
consider applying $(3,X) \in \Sym_N$ to our $\FF$ in 
(\ref{eq:9X3''}).
{This flips the 3X edge, taking it from type $\mai$ to type $\aaa$
(cf. Example~\ref{exa:f23}).
}
{It also swaps the 1X and 13 edges (a 0 edge and an 
$\pai$ edge), and leaves all other edges and $S$ unchanged.}
{Looking at the construction of $\partrr$ we have that 
$\partrr(f^{(3X)}\F)$ is}
\[
\left( \; 
\begin{ytableau} 5 \\ 7  \end{ytableau} \;\;\;
\begin{ytableau} 9&3 \\ X  \end{ytableau} \;\;\;
, \;\;
\begin{ytableau} 6&8 \\ 1&2&4  \end{ytableau} \;\;\;
\right) 
\]
{which is immediately seen to be $(3X).\partrr(\F)$.\\} 
\\
For case-4,
suppose we apply $(1,9) \in\Sym_N$ to 
our $\FF$ in (\ref{eq:9X3''}),
i.e. $\FF \mapsto f^{(1,9)}\FF$ as in (\ref{lem:37III}).
Then
{we claim}
{$\partrr(\FF) \leadsto \partrr ( f^{(1,9)} \FF) $ is given by}
\[ 
\ytableausetup{smalltableaux} 
\left( \; 
\begin{ytableau} 5 \\ 7  \end{ytableau} \;\;\;
\begin{ytableau} 9&X \\ 3  \end{ytableau} \;\;\;
, \;
\begin{ytableau} 6&8 \\ 1&2&4  \end{ytableau} \;\;\;
\right) 
\;\;\leadsto\;\; \;\; 
\left(  \; 
\begin{ytableau} 5 \\ 7  \end{ytableau} \;\;\;
\begin{ytableau} 1&X \\ 3  \end{ytableau} \;\;\;
, \;
\begin{ytableau} 6&8 \\ 9&2&4  \end{ytableau} \;\;\;
\right) 
\]
(and cf. (\ref{exa:f13})).
This can be seen by noting the following.
In this new equivalent solution 
$f^{(1,9)}\FF $
vertex 1 will have the 
the $\alpha$ parameter value from the old 9X3 nation, {and vertex $9$ the $\beta$ parameter from the $68124$ nation} 
(so it 
is apt 
to think of the parameter
staying with the nation under exchange, even though 
two nations could entirely swap populations 
--- we can call
this Mach's other principle \cite{Mach-HawkingEllis73}).
Vertex 1 will also have `exchanged' its $S$ eigenvalue 
with vertex 9 
(although this is +1 to +1 so no change, here).
The 0-edge at 9X is now a 0-edge at 1X.
The /-edge at 69 is now a \off-edge at 61 (or 16, {note that the off diagonal entries will also flip}), and so on.
Similarly,
the /-edge at 1X is now a /-edge at 9X.
{The $\mai$ edge $93$ is now an $\aaa$ edge $13$, and similarly with $61$ to $69$.}
\\
Case-3 is similar. 
\qed 

\noindent 
This concludes the proof of the Theorem.

\section{Discussion and future directions} \label{ss:discus}

The input $J_N^\pm$ to our recipe yields some interesting 
combinatorial questions.

\mdef 
Note from (\ref{pa:516}) that 
the integer sequence for $|J_N|$ is 
the Euler transform of 1,2,3,4,...,
which begins 1,3,6,13,...
This is also MacMahon's sequence for plane partitions
(since this is the same Euler transform,
as noted in \cite[OEIS A000219]{oeis}). 

\mdef Exercise. Determine a bijection! 

\mdef  \label{pa:combover}
Let us determine the size of $J_N$
by another method.  
First note that there are exactly $k$ distinct compositions of $k$ into at most 2 parts.  To build the elements of $J_N$ we form a multiset of such compositions so that the sum of their sizes is $N$.  The generating function for the number of ordinary partitions is
$\sum_{i=0}^\infty p(n)x^n=\prod_{k\geq 1}\frac{1}{1-x^k}$: indeed, partitions 
can be regarded as a multiset of natural numbers $k\geq 1$.  The modification for our set up is that we are selecting multisets of compositions into two parts, and there are $k$ distinct compositions of each size $k$. Thus each factor should be taken $k$ times, giving $$\sum_{n=0}^\infty |J_n|x^n=\prod_{k\geq 1}\frac{1}{(1-x^k)^k}=1+x+3x^2+6x^3+13x^4+\cdots
$$
{This indeed coincides with MacMahon's GF for the
sequence of sizes of sets of plane partitions.
It is intriguing that the proof here is obtained 
very neatly from the very classical 
(due to Euler)
ordinary-partitions  case. The proof for plane partitions appears far more involved --- see e.g. \cite{stanley1999enumerative}.}
 
\mdef 
The sequence $|J_N^{\pm}|$ for $N=0,1,\ldots$  is (1), 2, 7, 18, 47, 110, ....
This is also the count for the set of ordered pairs
of plane partitions of total degree $N$
(see e.g. \cite[A161870]{oeis}).
Of course if a sequence has generating function $A(z)$
(in our case MacMahon's function)
then the sequence for ordered pairs 
has generating function $A^2(z)$. 

\medskip

It is an important question as to the measure of the
set of charge-conserving representations in the set of 
all representations
up to a suitable notion of isomorphism
(i.e. how restrictive were the constraints we put on in order to get a complete solution?). 
One way to think about this is by analogy with the 
corresponding problem for Hecke representations. 
To this end it is instructive to 
compare with the machinery of classical Hecke representation theory
as in \cite{Martin92}.
There we see, by a method involving Bruhat orders, 
that cc representations are, in a suitable sense, eventually faithful
(note in particular \cite[\S2.3(7)]{Martin92}). 
This raises several further representation-theoretic questions, such as the following. 
\\
What is the smallest (monoidal) subcategory of $\Mat$ that contains all perm matrices?
What is the smallest that contains all perm matrices
as ``additive'' components (subblocks)?
Is there a fusion trick?
What about monomial matrices?

\medskip 

This work can be seen as part of a more general programme on Representation theory of natural categories.
Here one starts by observing the naturalness/singular-ness of natural categories in the wider realm of monoidal categories (cf., for example, \cite{JOYAL199155}), as way of unifying groups and algebras {\it a la} statistical  mechanics (cf. for example \cite{Martin08a}); and observes key rigidification aspects. 
Many questions arise here, for example around generalisations and around applications (in particular to topological quantum computation - cf. for example \cite{baez_schreiber,porter_turaev,martins_picken,wang2015non}).

\medskip

\renewcommand{\theHsection}{A\arabic{section}} 
\appendix

\section*{Appendices} 

\section{Background for the Proof of Lemma~\ref{lem:res2B}}

\newcommand{\signa}[1]{signature: \; #1}
\newcommand{\SymXZ}{\Z_2 \times \Sigma}
\renewcommand{\ooo}[1]{}  

The simplicially-directed $K_3$ graph has edge orientations $1\rightarrow 2$, 
$2\rightarrow 3$, $1\rightarrow 3$.
The following 
decorated simplicially-directed $K_3$
diagrams should be understood as 
in \cite{MR1X},
giving $\aalph(R) = (a_1,a_2,a_3,A(1,2),A(1,3), A(2,3))$
with 
vertex parameters
$a_1,a_2,a_3$ given by vertex labels, and for example, $A(1,2)$ given by the label on the edge from vertex $1$ to vertex $2$.

\mdef \label{pr:N3} 
{\bf Proposition}. \cite[Prop.5.1]{MR1X} 
\label{pr:9rule}
{\it For $N=3$ the following types of
configurations yield a charge-conserving 
functor  $\FF : \Bcat \rightarrow \Match^3$
(showing one {variety} per $\SymXZ_3$ orbit):}
\[
\ber{2}{3}{\;\;\;/_{\mu} }{\;\; /_{\nu} }{\;/_{\lambda} \;}
\hspace{.21in}  
\bermss{1}{2}{1}{\;/_{\mu}\;}{\;/_{\mu}\;}{\uppfii_\beta} \hspace{.1in}
\hspace{.1241in} 
\bermss{1}{2}{3}{\;/_{\mu}\;}{\;/_{\mu}\;}{\uppai}
\hspace{.21in}
\bermss{1}{2}{1}{\;/_{\mu}\;}{\;/_{\mu}\;}{0} 
\]  \vspace{.05in}
\[  \hspace{.014in}
\bermpp{1}{1}{1}{\uppfii_\beta \;}{\;\uppfii_\beta \;}{\;\uppfii_\beta \;}
\hspace{.24in}
\bermpp{1}{2}{1}{\uppai}{\uppai}{\;\uppfii_{a_2} \;}
\hspace{.24in}
\bermpp{1}{2}{2}{\uppai}{\uppfii_{a_1}}{\; \uppai\;}  
\]  \vspace{.041in}
\[
\bermxx{1}{2}{2}{\;\pai\;}{0}{\pai}{\abcac}
\hspace{.21in}
\bermxx{1}{1}{1}{\pfii_\beta \;}{0}{\pfii_\beta}{\abcac}
\hspace{.21in}
\ber{1}{1}{0}{0}{0}   
\]

\medskip 

\mdef For example, the $\pai \pai 0$ case shown is 
$\bu{1 \\ *(yellow) 2& *(yellow) 3}$  (second row coloured). In this orbit we have also 
$\bu{2 \\ *(yellow) 1& *(yellow) 3}$
and
$\bu{3 \\ *(yellow) 1& *(yellow) 2}$,
collectively labelled $\onetwox$.
On the other hand, in the $\twoonex$ orbit (not explicitly represented above) we have
$\bu{1&2 \\ *(yellow) 3}
\stackrel{f^{(23)}}{\rightarrow}
\bu{1&3 \\ *(yellow) 2}
\stackrel{f^{(12)}}{\rightarrow}
\bu{2&3 \\ *(yellow) 1}
\;$ 
which unpacks to 
$$
\bermxx{1}{1}{2}{0}{\;\pai\;}{\;\pai\;}{\abcbc}
\stackrel{f^{(23)}}{\longrightarrow}
\bermxx{1}{2}{1}{\;\pai\;}{\;\mai\;}{0}{\abcab}
\stackrel{f^{(12)}}{\longrightarrow}
\bermxx{1}{2}{2}{\;\mai\;}{0}{\;\mai\;}{\abcac}
$$
- note that after the first move the 23 edge decoration is reversed, but the 
parameters on this edge are now tied to those on edge-12.

\medskip 

\mdef  \label{pa:5rule}  
Eliminating from (\ref{pr:N3}) the cases with $\fff$, 
which by Lem.\ref{lem:anof} do not 
extend, we have:
\[ 
\ber{2}{3}{\;\;\;/_{\mu} }{\;\; /_{\nu} }{\;/_{\lambda} \;}
\hspace{.21in} 
\bermss{1}{2}{3}{\;/_{\mu}\;}{\;/_{\mu}\;}{\uppai}
\hspace{.21in}
\bermss{1}{2}{1}{\;/_{\mu}\;}{\;/_{\mu}\;}{0} 
\hspace{.2in} \]\[
\bermxx{1}{2}{2}{\;\pai\;}{0}{\pai}{\abcac}
\hspace{.21in}
\ber{1}{1}{0}{0}{0}   
\]
with corresponding indices:
\[
\one^3 \hspace{0.8972in} \one\;\oneonex \hspace{0.8972in} 
\one\;\two \hspace{0.8972in} \twoonex  \; or\;\onetwox
\hspace{0.8972in}
\three 
\]
Note that we do not know {\em ab initio} that any 
of these extend.
The formally possible extensions in each case 
(according to the application of 
Lem.\ref{lem:anof} to each $K_2$ subgraph) are
represented by giving the corresponding decorations for 
$F(s)$. We write $+$ and $-$ for the diagonal eigenvalues $+1, -1$. We write $/$ for the edge decoration 
$\smat 0&1\\1&0\stam$
(taking advantage of the gauge freedom to fix this), 
but $/_{\aaa}$ to indicate that
the vertex eigenvalues are forced opposite.
For the 0 edges the vertex eigenvalues are forced the same. 
Note in particular that the diagonal eigenvalues 
within a nation are all determined by $a_1$.
Indeed they would be over-determined if there are more
than two $\aaa$ edges --- more than two counties, 
but this is already impossible.
On the other hand the relative eigenvalues between 
nations are not formally determined. Thus the possibilities overall correspond to a choice of sign
for (the lead $a_i$ in) each nation.
We have: 
\[
\berr{+}{+}{\pm}{\;/_{} }{\; /_{} }{\;/_{} \;}
\hspace{.21in} 
\berr{+}{+}{-}{\;/_{}\;}{\;/_{}\;}{/_{\aaa}}
\hspace{.21in}
\berr{+}{+}{+}{\;/_{}\;}{\;/_{}\;}{0} 
\hspace{.2in} 
\berr{+}{-}{-}{\;/_{\aaa}\;}{0}{/_{\aaa}}
\]
\[
\berr{+}{+}{+}{0}{0}{0}   \hspace{.21in}
\berr{+}{-}{\pm}{\;/_{} }{\; /_{} }{\;/_{} \;}
\hspace{.21in} 
\berr{+}{-}{-}{\;/_{}\;}{\;/_{}\;}{/_{\aaa}}
\hspace{.21in}
\berr{+}{-}{+}{\;/_{}\;}{\;/_{}\;}{0} 
\hspace{.2in}  
\]
together with the overall $s \leadsto -s$ flips.
For completeness (since we have not introduced the flip move here) we should include here, say, 
\[
\berr{+}{-}{-}{\;/_{\mai}\;}{0}{/_{\mai}}
\]

We find by direct calculation that all satisfy the $N=3$
constraints. 

\mdef 
Overall from (\ref{pa:5rule}), we have the following signed-multiset indices up to flip and symmetry:
\[
(\one^3,) \hspace{0.8972in} (\one\;\oneonex ,) \hspace{0.8972in} 
(\one\;\two,) \hspace{0.8972in} (\twoonex,) \hspace{0.8972in}
(\three,)
\] \[
(\one^2, \one) \hspace{0.8972in} (\oneonex, \one) \hspace{0.8972in}
(\two,\one) \hspace{0.8972in} \hspace{2in} 
\]

\medskip

\section{Ket calculus: the Texas braid relation}
\label{ss:ket}

Here we give a proof of  Lemma \ref{lem:X} by direct calculation.
In fact, we prove a slightly more general statement:

\mdef \mulem \label{lem:zZz}
Suppose $ z,Z,\zeta \in \Match^3(2,2)$ satisfy 
\beq \label{eq:zZz}  
z_1Z_2\zeta_1=\zeta_2Z_1z_2
\eq  
in $\Match^3(3,3)$
(recall $z_1 = z \otimes \IImat$ and so on).
For invertible $X\in\Match^3(2,2) $ let $z^X = XzX^{-1}$ and so on.
If $X$ is diagonal then 
$$z^X_1 Z^X_2 \zeta^X_1 = \zeta^X_2 Z^X_1 z^X_2$$.

\proof{ 
Consider the equation \begin{equation}
    \label{eq:generalized braid}
z_1Z_2\zeta_1=\zeta_2Z_1z_2
\end{equation} where 
$ z,Z,\zeta \in \Match^3(2,2)$ 
with 
nonzero entries lower case Roman ($a_{ij},b_{ij},c_{ij},d_{ij}$), upper case Roman ($A_{ij},B_{ij},C_{ij},D_{ij}$) and lower case Greek ($\alpha_{ij},\beta_{ij},\gamma_{ij},\delta_{ij}$), respectively, where $i\leq j$.  We have (for $z$, with analogous conventions for $Z,\zeta$):

$$z\ket{ij}=\begin{cases} d_{ij}\ket{ij}+b_{ij}\ket{ji} & i<j\\
a_{ji}\ket{ij}+c_{ji}\ket{ji} & j<i\\
a_{ii}\ket{ii} & i=j
\end{cases}$$

We will show that if $z,Z$ and $\zeta$ satisfy equation (\ref{eq:generalized braid})
then so do $z^X,Z^X$ and $\zeta^X$.

First note that conjugating each of $z,Z$ and $\zeta$ by a diagonal matrix leaves the diagonal entries $(a_{ij},A_{ij},\alpha_{ij})$ and $(d_{ij},D_{ij},\delta_{ij})$  invariant, while the effect on the other entries is: 
$\beth_{ij}\mapsto m_{ij}\beth_{ij}$ for $\beth\in\{b,B,\beta\}$ and $\daleth_{ij}\mapsto \daleth_{ij}/m_{ij}$ for $\daleth\in\{c,C,\gamma\}$ for some $m_{ij}$.  To see that this leaves the polynomial equations unchanged we compute a few entries.  First it should be clear that the $|iii\rangle$ entries give trivial conditions.  
{
For the $|ijk\rangle$ entries with $i,j,k$ distinct it is enough to consider $|123\rangle$ by a local permutation argument: one simply applies the permutation to all indices and then defines, for $i<j$, $a_{ji}:=d_{ij}$, $d_{ji}:=a_{ij}$, $b_{ji}:=c_{ij}$ and $c_{ji}:=b_{ij}$ and similarly for the $A,B,C,D$ and $\alpha,\beta,\gamma,\delta$.}

We have:
\begin{eqnarray*}
&&z_1Z_2\zeta_1|123\rangle=
\\
&&z_1Z_2 (\beta_{12}|213\rangle + \delta_{12}|123\rangle)=\\
&& z_1(\beta_{12}(B_{13}|231\rangle +D_{13}|213\rangle) + \delta_{12}(B_{23}|132\rangle + D_{23}|123\rangle))=\\
&& \beta_{12}(B_{13}(b_{23}|321\rangle + d_{23}|231\rangle) + D_{13} (a_{12}|213\rangle +c_{12}|123\rangle))+ \\
&&
\delta_{12}(B_{23}(b_{13}|312\rangle +d_{13}|132\rangle) + D_{23}(b_{12}|213\rangle +d_{12}|123\rangle))\\
\end{eqnarray*} 
whereas:
\begin{eqnarray*}
  &&  \zeta_2Z_1z_2|123\rangle=\\
  && \zeta_2Z_1 (b_{23}|132\rangle +d_{23}|123\rangle) =\\
  && \zeta_2 (b_{23}(B_{13}|312\rangle + D_{13}|132\rangle) +d_{23}(B_{12}|213\rangle + D_{12} |123\rangle))=\\
  && b_{23}(B_{13}(\beta_{12}|321\rangle +\delta_{12}|312\rangle ) + D_{13}({\alpha_{23}|132\rangle + \gamma_{23}|123\rangle }))+\\
  &&d_{23}(B_{12}(\beta_{13}|231\rangle +\delta_{13} |213\rangle ) + D_{12}(\beta_{23}|132\rangle +\delta_{23}|123\rangle))
\end{eqnarray*}
This yields 6 equations the last of which is trivial (denoted by (*)):

\begin{eqnarray}
    &&\beta_{12}D_{13}c_{12} + \delta_{12}D_{23}d_{12} = b_{23}D_{13}\gamma_{23} +d_{23}D_{12}\delta_{23} \label{eq:ijkbraidrel1}\\
    && \beta_{12}D_{13}a_{12} + \delta_{12}D_{23}b_{12} = d_{23}B_{12}\delta_{13} \label{eq:ijkbraidrel2}\\&& \delta_{12}B_{23}d_{13} = b_{23}D_{13}\alpha_{23} +d_{23}D_{12}\beta_{23}
    \label{eq:ijkbraidrel3}
    \\&&
    \beta_{12}B_{13}d_{23} = d_{23}B_{12}\beta_{13} \label{eq:ijkbraidrel4}\\&&\delta_{12}B_{23}b_{13} = b_{23}B_{13}\delta_{12} \label{eq:ijkbraidrel5}\\&& \nonumber
    \beta_{12}B_{13}b_{23} = b_{23}B_{13}\beta_{12} \quad (*)
\end{eqnarray}

 Next we compute the polynomial consequences of the generic braid relation on $|112\rangle$.

 Here we have: 

 \begin{eqnarray*}
    && z_1Z_2\zeta_1\ket{112}=\\
     &&\alpha_{11}(D_{12}a_{11}\ket{112}+B_{12}(d_{12}\ket{121}+b_{12}\ket{211}))
 \end{eqnarray*}
whereas 
\begin{eqnarray*}
    &&\zeta_2Z_1z_2\ket{112}=\\
    && d_{12}A_{11}(\delta_{12}\ket{112}+\beta_{12}\ket{121})+\\
    && b_{12}(D_{12}(\alpha_{12}\ket{121}+\gamma_{12}\ket{112})+B_{12}\alpha_{11}\ket{211})
    \end{eqnarray*}

These yield $3$ equations, that last of which is trivial:

\begin{eqnarray}
    &&\alpha_{11}D_{12}a_{11}=d_{12}A_{11}\delta_{12}+b_{12}D_{12}\gamma_{12}\\
    &&\alpha_{11}B_{12}d_{12}=d_{12}A_{11}\beta_{12}+b_{12}D_{12}\alpha_{12}\\
    &&\delta_{11}B_{12}b_{12}=
    b_{12}B_{12}\delta_{11}\quad (*)\nonumber
\end{eqnarray} 

Similar computations equating $z_1Z_2\zeta_1$ and $\zeta_2Z_1z_2$ on $\ket{121}$ and $\ket{211}$ yield:

\begin{eqnarray}
    &&\delta_{12}A_{12}a_{12}
    +\beta_{12}D_{12}c_{12}=a_{12}D_{12}\alpha_{12}+c_{12}A_{11}\beta_{12}\\
    && \alpha_{12}C_{12}a_{11}=a_{12}D_{12}\gamma_{12}+c_{12}A_{11}\delta_{12}\\
    &&\delta_{12}A_{12}b_{12}+\beta_{12}D_{12}a_{12}=a_{12}B_{12}\alpha_{11}\\
    &&\alpha_{12}A_{11}a_{12}+\gamma_{12}A_{12}b_{12}=a_{11}\alpha_{11}A_{12}\\
    &&\alpha_{12}A_{11}c_{12}+\gamma_{12}A_{12}a_{12}=a_{11}C_{12}\delta_{12}\\
    &&\gamma_{12}C_{12}a_{11}=a_{11}C_{12}\gamma_{12} \quad (*)\nonumber
\end{eqnarray}

{Again, we omit the calculations for $\ket{122},\ket{212}$ and $\ket{221}$ as the corresponding polynomial equations can be obtained from the above by applying the transposition $(1 \; 2)$ and defining $b_{21}:=c_{12}$, $a_{21}=d_{12}$ etc. }

Now we observe that
in each equation  
either 
\begin{enumerate}
    \item Each term has the same number of $b_{ij},B_{ij}$ or $\beta_{ij}$ (resp. $c_{ij},C_{ij}$ or $\gamma_{ij}$) terms so that the equation obtained by simultaneous $X$-conjugation has a negligible overall non-zero factor, or
    \item Each term with a $\{b_{ij},B_{ij},\beta_{ij}\}$ term also has a corresponding $\{c_{ij},C_{ij},\gamma_{ij}\}$ term (not necessarily matching) so that the effect of $X$-conjugation is nugatory.
\end{enumerate}
This completes the proof. \qed
}

Lemma \ref{lem:X} now follows by substituting $R,S$ for $z,Z,\zeta$ as appropriate.

\mdef {\em Remark}. The superficially {\em braid}-like equation (\ref{eq:zZz}) seems too general to have such a geometric-topological meaning. Some of its specialisations can be endowed with such meaning, as we can see from our context. But not all. Nonetheless, in $\Match^3$ it is useful. 

\medskip

\section{The loop braid category \texorpdfstring{$\LL$}{L}: technical asides}  \label{ss:AppA}

\newcommand{\sig}{\varsigma}
\newcommand{\rhoo}{\varrho}

\newcommand{\intt}[1]{\mathrm{int}(#1)}
\newcommand{\Motion}{\mathrm{Mot}}
\newcommand{\Mot}[2]{\Motion_{\underline{#1}}}
\newcommand{\Motempty}[1]{\Motion_{#1}}
\newcommand{\LLex}{\LL^{\mathrm{ext}}}
\newcommand{\Aut}{\mathrm{Aut}}
\newcommand{\TOPO}{\mathrm{TOP}}
\newcommand{\III}{\mathbb{I}}
\newcommand{\id}{\mathrm{id}}

Here we aim to provide,
in support of  Sec.\ref{ss:lab},
more of a rough substitute for the braid laboratory of \cite{MR1X} 
-  by working with an appropriate motion groupoid; and hence to give a bit more of a {\it feel} for the relations in the presentation (\ref{de:L'}). 
And to prove \ref{lem:loopbraidiso}, on non-monoidal presentation.

\subsection{Topological Background}
In what follows we denote the unit interval $[0,1]\subset \R$ by $\III$. 
We denote the unit ball  
$D^3=\{x\in \R^3 \, \mid \, \lvert x\rvert \leq 1 \}$ and the unit circle
$S^1=\{x\in \R^2 \, \mid \, \lvert x \rvert =1 \}$.

\mdef \label{pa:mot}
Let $M$ be a manifold, 
with boundary $\partial M$, 
and $\underline{M}=(M,\partial M)$.
Let $\TOPO^h(M,M)$ denote the set of self-homeomorphisms of $M$, 
made a space with the compact-open topology.

A {\it flow} of $\underline{M}$ is a path $f\colon \III \to \TOPO^h(M,M)$ with $f_0=\id_M$ and all $f_t$ pointwise fixing $\partial M$; or equivalently an automorphism $f'\colon M\times \III\to M\times \III$
such that
for each $t\in \III$, $f'$ restricts to a homeomorphism $f'_t\colon M\times \{t\} \to M\times \{t\}$ fixing $\partial M\times \{t\}$, and such that $f'_0$ is the identity.

A {\it motion} of $\underline{M}$
taking
$N$ to $N'$ is a triple $(f,N,N')$ consisting of a flow $f$ of $\underline{M}$, a subset $N\subset M$, and the image  $N'=f_1(N)$.

\newcommand{\from}{taking} 

Two motions in $\underline{M}$, $(f,N,N'), (g,N,N')$ are
{\it motion
equivalent} if $f$ can be continuously deformed into $g$ through motions \from\ $N$ to $N'$. 

For each $\underline{M}=(M,\partial M)$, 
there is a groupoid with objects the power set of $M$ and morphisms from $N\subset M$ to $N'\subset M$, motions up to motion equivalence. 
Following \cite{TMM} (where all details can be found), 
we denote this groupoid $\Mot{M}{A}$. 

\mdef 
For some manifold $M$
and set $Q$ of subsets of $M$, we use $\Mot{M}{\partial M}|_{Q}$ 
to denote the full subgroupoid of $\Mot{M}{\partial M}$
with objects $Q$. 

For example, if $p_n \subset (0,1)^2$ is a collection of $n$ points, for each $n \in \N$;
and $p_*$ is the set of all the $p_n$s, then
$\Mot{\III^2}{}(p_n,p_n)$ is a realisation of a braid group, and  $\Mot{\III^2}{}|_{p_*}$ is a realisation of the braid category.
This realisation bridges between the purely presentational one useful in representation theory, and the braid laboratory. Of course the connection to the braid laboratory makes several distinct uses of the continuum hypothesis.
What we do next will make several more!

\medskip 

\newcommand{\rr}{\mathfrak{r}}

{Fix a small number $\rr >0$.}
For each $n\in \N$, we fix an embedding $e_{n}\colon (S^1 \sqcup \ldots \sqcup S^1)\to \intt{D^3}$ of $n$ copies of $S^1$
into the interior of $D^3$
such that $l_n=e_n(S^1 \sqcup \ldots \sqcup S^1)$ is a configuration of unlinked circles of 
radius 
$\rr/n$ 
in the $xy$-plane with the $i$-th loop centred at 
$(\frac{i}{n}-\frac{1}{2n},0,0)$.
(Up to isomorphism it will not matter precisely
which configuration we take.
{The present choice is different from the one in the body of the paper, simply because we prefer not to have to unpick here the technical details of the limit required to have both the correct motion group on the nose, and the natural monoidal structure.}) 

\newcommand{\lS}{l_*}

Let $\lS$ 
be the set of all such $l_n$.
Note that 
$\Mot{D^3}{\partial D^3}|_{\lS}
\cong 
\sqcup_{l_n} 
\Mot{D^3}{\partial D^3}(l_n,l_n)$, 
since morphisms in  
$\Mot{D^3}{\partial D^3}$ 
exist only between homeomorphic subsets of $D^3$, and, by construction, $\lS$ contains precisely one element in each isomorphism class.

{The reader will readily confirm that for each pair $n,m \in \N$ there is a function taking $D^3 \sqcup D^3 $ to $D^3$ that takes the corresponding $l_n \sqcup l_m$ to $l_{n+m}$ in the spirit of Fig.\ref{fig:monoidal} (up to some rescalings),
and hence in the spirit of the braid laboratory of \cite{MR1X}.
It follows from (\ref{pa:mot}) that this 
collection of functions 
induces a (natural) monoidal structure on  
$ \sqcup_{n\in\N} \Mot{D^3}{\partial D^3}(l_n,l_n) $, 
and hence on $ \Mot{D^3}{\partial D^3}|_{\lS}$ 
(rigorous details will appear in \cite{MMT}).
As a brief abuse of notation we will use the same symbols for the monoidal categories.}

Let  
\[
\LLex \; =\; \Mot{D^3}{}|_{\lS}  \;\cong \; 
\sqcup_{n \in \N}   
\Mot{D^3}{}(l_n,l_n).
\]

\newcommand{\ofrog}{\varsigma}
\newcommand{\oswap}{\vars}
\newcommand{\oflip}{\uptau}

\mdef \label{de:mots} 
Next we consider some morphisms in $\LLex$. 
We have in mind motions inducing the circle trajectories 
of type $\varsigma$ and $\vars$ 
as in \S\ref{ss:lab}.
{That such trajectories extend to motions 
as in (\ref{pa:mot})
could  be shown by 
giving an explicit construction,
or proving
a suitable isotopy extension theorem, but here we simply lean on the (correct) intuition that there is one.
(See \cite[\S5.1]{TMM} for brief technicalities.
Complete details will appear in \cite{MMT}.)}

Let $\ofrog_i$ denote 
a morphism represented by a motion which swaps the positions of the $i$th and $i+1$th circle,
such that the circles remain parallel to the $xy$-plane, and
such that the $i$th circle `passes through' the disk parallel to the $xy$-plane bounded by the $i+1$th circle. 
The circle trajectory for 
such a motion is represented in Fig.~\ref{fig:79xx}.
(The {\em support} of a motion is the set of points that are not static throughout.
A key property of motions such as $\ofrog_i$ is that they include representatives where the support is localised in the region of $i$ and $i+1$.
Thus the local-view Figure conveys the motion -
it is recovered by suitably appending and prepending static loops.) 

\medskip 

Let $\oswap_i$ denote 
a morphism represented by a motion which swaps the positions of the $i$th and $i+1$th circle, such that the circles remain parallel to the $xy$-plane, and neither circle ever intersects the disk parallel to the $xy$-plane bounded by the other. Such a motion is represented in Fig.~\ref{fig:79}.

\medskip 

Let $\oflip_i$ denote
a morphism represented by a motion which rotates the $i$th circle by $\pi$ about an axis passing through the circle twice and its centre (thus the restriction of the endpoint of the motion to the $i$th circle is an orientation reversing homeomorphism). 

\newcommand{\agflip}{\mathfrak{t}}
\newcommand{\agswap}{\mathfrak{s}}
\newcommand{\agbraid}{\mathfrak{r}}

\medskip

The proof of (\ref{para:AutLiso})-(\ref{par:LBiso}) below is essentially contained in \cite{Dahm}, and reproduced in \cite{Goldsmith}, although we note that, as pointed out by \cite{Wattenberg}, the proof requires the additional assumption that each motion, and each homotopy, has compact support, rather than just each self homeomorphism.
With this additional assumption it can then be shown that the motion group of loops in $\R^3$ constructed in \cite{Dahm, Goldsmith} coincides with $\Mot{D^3}{\partial D^3}|_{l_*}$.

\mdef \label{para:AutLiso}
Consider the bouquet of $n$ loops homotopic to the complement of $l_n$ in $D^3$ (see e.g. \cite{DMM} and references therein).
Each motion induces an automorphism on the fundamental group of this bouquet. 
Clearly this fundamental group is generated by the $n$ simple loop paths. 
In fact it is free on these generators (for example by a universal cover argument). 

Let $\Aut (\mathrm{F}(x_1,\ldots,x_n))$ be the group of automorphisms of the free group generated by $\{x_1,\ldots,x_n\}$.
The given construction yields a homomorphism
\[
{\mathfrak D} \colon \LLex(l_n,l_n) \rightarrow \Aut (\mathrm{F}(x_1,\ldots,x_n)).
\]

\mdef
Let us consider the following
automorphisms in $\Aut (\mathrm{F}(x_1,\ldots,x_n))$.
\begin{align*}
    \agflip_i & \colon x_i\mapsto x_i^{-1}, \;\; x_k\mapsto x_k, \;\; k\neq i \\
    \agswap_i &\colon x_i\mapsto x_{i+1}, \;\; x_{i+1}\mapsto x_{i}, \;\; x_k\mapsto x_k, \;\; k\neq i, i+1 \\
    \agbraid_i & \colon x_i\mapsto x_{i+1}x_i x_{i+1}^{-1}, \;\; x_{i+1}\mapsto x_i , \;\; x_k\mapsto x_k, \;\; k\neq i
\end{align*}

While it is not easy to see that the construction in \ref{para:AutLiso} yields a well-defined group homomorphism from 
$ \LLex(l_n,l_n) $ it is straightforward to see that the images of our motions in (\ref{de:mots}) above are 
these 
automorphisms, in particular {$\mathfrak{D}(\oflip_i)=\agflip_i$, $\mathfrak{D}(\oswap_i)=\agswap_i$ and $\mathfrak{D}(\ofrog_i)=\agbraid_i$.
Moreover $\mathfrak{D}$ is an isomorphism 
onto the subgroup generated by $\agflip_i,\agswap_i,\agbraid_i $.
We thus have the following. 

\mdef \label{par:LBiso} The elements $ \oflip_i, \oswap_i, \ofrog_i$ 
generate the group $ \LLex(l_n,l_n)$.

\mdef \label{para:relationsL}
Observe that $\LL(n,n)$ as we have introduced it in the paper obeys
$\LL(n,n) \subset \LLex(n,n)$. 
It is
the 
{subgroup}
generated by $\rhoo_i$ and $\sig_i$, at each $l_n$.

Intuitively, we are restricting to classes which contain a motion which keeps all loops parallel to the $xz$-plane throughout the motion.

It follows from
from (\ref{pa:mot}) and (\ref{par:LBiso})
that there is an isomorphism (see also \cite{Dahm}) from $\LL(l_n,l_n)$ onto the 
{subgroup}
of $\Aut(F(x_1,\ldots, x_n))$ generated by the $\agswap_i, \agbraid_i$. 

The following identities are readily verified in $\Aut(F(x_1,\ldots, x_n))$:
\begin{align}\label{eq:loopbraidrels}
    \begin{cases}
    \agbraid_i\agbraid_j=\agbraid_j\agbraid_i & |i-j|>1 \\
\agbraid_i\agbraid_{i+1}\agbraid_i=\agbraid_{i+1}\agbraid_i\agbraid_{i+1} & i=1,\ldots, n-2\\
    \agswap_i\agswap_j=\agswap_i\agswap_j & \lvert i-j\rvert >1 \\
\agswap_i\agswap_{i+1}\agswap_i=\agswap_{i+1}\agswap_{i}\agswap_{i+1} & i=1,\ldots n-2 \\
    \agswap_i^2=1 & i=1,\ldots,n-1 \\
    \agswap_i\agbraid_j=\agbraid_j\agswap_i & \lvert i-j \rvert >1 \\
    \agbraid_i\agbraid_{i+1}\agswap_i = \agswap_{i+1}\agbraid_{i}\agbraid_{i+1} & i=1,\ldots n-2 \\
\agswap_{i}\agswap_{i+1}\agbraid_{1}=\agbraid_{i+1}\agswap_{i}\agswap_{i+1} &  i=1,\ldots n-2. \\
    \end{cases}
\end{align}

\mdef 
As proved by Savushkina \cite{Savushkina:96}, the 
{subgroup}
of $\Aut(F(x_1,\ldots, x_n))$ generated by the $\agswap_i, \agbraid_i$
has {\em presentation} as in (\ref{eq:loopbraidrels}) above.

\subsection{Generators and relations}\label{ss:Appg+r}

{Here we give a little background on two further technical aspects (again not strictly needed for this paper, but useful contextually). One is the nature of functors from finitely presented natural categories; and the other is on monoidal versus non-monoidal presentation.}

\newcommand{\D}{{\mathrm{D}}}
\newcommand{\CC}{{\mathrm{C}}}
\mdef \label{def:moncat} Let $\CC,\D$ be monoidal categories, with monoidal identity objects denoted by $\emptyset_{\CC}$ and $\emptyset_{\D}$, and monoidal composition maps denoted $\otimes_{\CC},\otimes_{\D}$ respectively. 
\\
A {\em strict monoidal functor} $\mathrm{F}\colon \CC\to \D$ 
is a functor 
such that
$\mathrm{F}(\emptyset_{\CC})=\emptyset_{\D}$,
$F(X)\otimes_{\D}F(Y)=F(X\otimes_{\CC}Y )$ for all pairs $X,Y$ of objects in $\CC$, and
$F(f)\otimes_{\D}F(g)=F(f\otimes_{\CC}g )$ for all pairs $f,g$ of morphisms in $\CC$.

\mdef {\em Strong} and {\em lax} monoidal functors weaken the equalities in Def.~\ref{def:moncat} to isomorphisms and morphisms respectively. Specifically a strong (resp. lax) monoidal functor consists of a functor $F\colon \CC\to \D$, together with an isomorphism (resp. morphism)
$F_\emptyset\colon\mathrm{F}(\emptyset_{\CC})\to \emptyset_{\D}$ and a natural isomorphism (resp. transformation) $\{F_2\colon F(X)\otimes_{\D}F(Y)\to F(X\otimes_{\CC}Y )\}_{X,Y}$,
with various coherence conditions. 

In the diagonal categories considered here, there are no morphisms between distinct objects so the conditions of strong and lax monoidal functors imply the equalities on objects as in Def.~\ref{def:moncat}.
Although this a priori leaves open the possibility of non-trivial automorphisms $F_\emptyset$ and $F_2$.

\medskip

We can
codify the strict monoidal functors from $\CC$ to $\D$
as follows.

\begin{lemma}
Let $\CC,\D$ be natural monoidal categories such that $\CC$ is finitely presented with 
non-empty set of generating morphisms $G(\CC)$ and a corresponding set of relations $R(\CC)$. 
{Of course any functor $F$ restricts to a pair consisting of the value of $F_0(1)$; and a function
$F_1 : G(\CC) \rightarrow \D$. 
In particular if $F_0(1)=M$ then 
$F_1(f:n\to m) \in \D(Mn,Mm)$.}
Let $M\in \N$, and
let $\mathfrak{F}\colon G({\CC})\to \D$ be a map sending 
each 
$f\colon n\to m$ in $G(\CC)$ to a morphism in $\D(nM,mM)$.
Then $\mathfrak{F}\colon G({\CC})\to \D$ extends to a  functor $\mathsf{F}\colon \CC\to \D$, which is unique, precisely if the images under $\mathfrak{F}$ of the relations $R(\CC)$ 
    are
    satisfied in $\D$.
\end{lemma}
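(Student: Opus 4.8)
The plan is to show that the stated condition — the images of the relations hold in $\D$ — is both necessary and sufficient for $\mathfrak{F}$ to extend, and that when it does the extension is forced to be unique. First I would recall the structure of a finitely presented natural monoidal category: since $\CC$ is natural, its object monoid is $\N$ (freely generated by the single object $1$), so once we declare $F_0(1) = M$ the object map is completely determined, $F_0(n) = nM$ for all $n$, using that a strict monoidal functor must preserve $\otimes$ on objects. This already pins down the target hom-sets: a generator $f\colon n \to m$ must map into $\D(nM, mM)$, which is exactly the hypothesis on $\mathfrak{F}$. So the only freedom is in the morphism assignment.

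Next I would address \emph{necessity}, which is immediate: if $\mathsf{F}$ is a functor extending $\mathfrak{F}$, then applying $\mathsf{F}$ to any relation $a = b$ in $R(\CC)$ gives $\mathsf{F}(a) = \mathsf{F}(b)$ in $\D$; since $a,b$ are words in the generators and $\mathsf{F}$ is a strict monoidal functor, $\mathsf{F}(a)$ is computed from the values $\mathfrak{F}(g)$, $g \in G(\CC)$, by the same monoidal-composition recipe, so the image relation holds. For \emph{sufficiency} I would use the universal property of a monoidal presentation: $\CC$ is, by definition of being presented by $(G(\CC), R(\CC))$, the quotient of the free strict monoidal category $\mathcal{W}$ on the generators $G(\CC)$ (with the object monoid $\N$) by the monoidal congruence generated by $R(\CC)$. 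A map $\mathfrak{F}$ of generators into $\D$ sending $f\colon n\to m$ to a morphism in $\D(nM, mM)$ extends uniquely to a strict monoidal functor $\widetilde{\mathsf{F}}\colon \mathcal{W} \to \D$ — here one builds $\widetilde{\mathsf{F}}$ on a general word by induction on the word's construction from generators, identities, $\circ$ and $\otimes$, using the strictness of both $\mathcal{W}$ and $\D$ so that no coherence isomorphisms intervene. This $\widetilde{\mathsf{F}}$ factors through the quotient $\CC$ precisely when $\widetilde{\mathsf{F}}$ sends every relation in $R(\CC)$ to an equality — i.e. when the images of $R(\CC)$ hold in $\D$ — giving the desired functor $\mathsf{F}$. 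Uniqueness follows because any strict monoidal extension of $\mathfrak{F}$ agrees with $\widetilde{\mathsf{F}}$ on $\mathcal{W}$ (generators determine the functor on all words), hence induces the same map on the quotient.

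Finally I would note the verification shortcut relevant to this paper: by Lemma~\ref{lem:restricto} and Remark~\ref{rem:width 3}, when $\D = \Match^N$ and the presentation has width $w$ (here $w = 3$), checking the image of each relation reduces to checking it on each $w$-element subset of $\ul{N}$, i.e. it suffices to verify in $\Match^w$. The main obstacle is conceptual rather than computational: making precise the phrase ``$\CC$ is finitely presented as a strict monoidal category'' so that the universal property of the free strict monoidal category on a graph of generators can be invoked cleanly — that is, being careful that ``relation'' means a pair of parallel morphisms in the free strict monoidal category and that the congruence generated is a \emph{monoidal} congruence (closed under $\circ$ and $\otimes$ with arbitrary morphisms on either side). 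Once that setup is in place, the argument is a routine instance of the universal property of a quotient; the genuinely substantive checks in applications (as elsewhere in this paper) are always the finitely many verifications that the specific images of the relations vanish.
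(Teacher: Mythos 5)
Your proposal is correct and takes essentially the same route as the paper: the paper likewise extends $\mathfrak{F}$ formally to all morphisms by forcing preservation of identities, composition and $\otimes$ (using that every morphism is a, generally non-unique, composite of generators and identities), observes that this assignment is well defined precisely when the images of the relations in $R(\CC)$ hold in $\D$, and obtains uniqueness by construction. Your phrasing via the free strict monoidal category and the monoidal congruence generated by $R(\CC)$ is just a more formal packaging of the same well-definedness argument, and your explicit necessity direction and the remark on checking relations in $\Match^w$ are consistent with, though not needed beyond, what the paper does.
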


\newcommand{\otimess}{\boxtimes}

\begin{proof}
{Since $G(\CC)$ generates, every morphism can be built (albeit non-uniquely) as a category-and-monoidal product of elements and identity morphisms.}
There is an extension of $\mathfrak{F}$ 
to a formal map $\mathfrak{F}'\colon \CC\to \D$, which on objects is the map $F_0\colon \N\to \N$ with $F_0(1)=M$, and on morphisms is defined by the relations $\mathfrak{F}'(1_X)=1_{\mathfrak{F}'(X)}$, $\mathfrak{F}'(f\circ g)=\mathfrak{F}'(f)\circ F(g)$, and 
$\mathfrak{F}'(f\otimes g)= \mathfrak{F}'(f) \otimes \mathfrak{F}'(g)$.
    This $\mathfrak{F}'$ is a well defined map $\mathsf{F}\colon \CC\to \D$ precisely when the relations $R(C)$ are satisfied, in which case it is a strict monoidal functor. By construction it is unique.
\end{proof}

\mdef Recall that the object monoid of 
the monoidal groupoid
$\LL'$ is the natural numbers, and that
both
generating morphisms are of the form $f\colon n\to n$, 
where $n$=2,
and 
therefore
all $\LL'(n,n')=\emptyset$ unless $n=n'$. 

\mdef 
After fixing $n$,
we denote the element $\sigma \otimes 1 \otimes \ldots \otimes 1  \in \LL'(n,n)$ of length $n-1$ by $\sigma_1$. We similarly label $\sigma_2,\ldots, \sigma_{n-1}$, and $s_1,\ldots, s_{n-1}$. Notice, using that $\otimes$ is a functor and thus preserves composition, that these elements generate {the group} $\LL'(n,n)$.

\begin{lemma}\label{lem:loopbraidiso}
There is an 
isomorphism of categories
(not using the monoidal structure)
\begin{align*}
\Theta \colon \LL' &\to \LL 
\end{align*}
which, at each $n$, sends $\sigma_i\mapsto 
\sig_i $ and $
s_i \mapsto \rhoo_i$. 
\soutx{(where using \ref{para:AutLiso} we label generating elements in $\LL$ by their preimages in $\Aut(F(x_1,\ldots x_n))$)}
\end{lemma}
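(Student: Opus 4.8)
The plan is to descend to the level of hom-groups. Both $\LL'$ and $\LL$ are diagonal categories: there are no morphisms between distinct objects, composition in $\LL'(n,n)$ (resp. $\LL(l_n,l_n)$) is the group operation, and identities are group units. Hence giving a functor $\Theta\colon\LL'\to\LL$ is the same as giving the object assignment $n\mapsto l_n$ together with a group homomorphism $\Theta_n\colon\LL'(n,n)\to\LL(l_n,l_n)$ for each $n$, and $\Theta$ is an isomorphism of categories iff every $\Theta_n$ is a group isomorphism. So it suffices to produce, for each $n$, a group isomorphism $\LL'(n,n)\cong\LL(l_n,l_n)$ carrying $\sigma_i\mapsto\sig_i$ and $s_i\mapsto\rhoo_i$; these will be automatically compatible across $n$ because both sides are built from the same generators.

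First I would identify the target. By (\ref{para:AutLiso})--(\ref{para:relationsL}), $\mathfrak{D}$ restricts to an isomorphism of $\LL(l_n,l_n)$ onto the subgroup of $\Aut(F(x_1,\dots,x_n))$ generated by the $\agswap_i,\agbraid_i$, with $\rhoo_i\mapsto\agswap_i$ and $\sig_i\mapsto\agbraid_i$, and by Savushkina's theorem \cite{Savushkina:96} that subgroup has presentation (\ref{eq:loopbraidrels}). Transporting back, $\LL(l_n,l_n)$ is canonically the abstract group $G_n$ presented by generators $\sig_i,\rhoo_i$ subject to (\ref{eq:loopbraidrels}). Next I would build a surjection $\pi_n\colon G_n\twoheadrightarrow\LL'(n,n)$, $\sig_i\mapsto\sigma_i$, $\rhoo_i\mapsto s_i$, by verifying that $\sigma_i,s_i\in\LL'(n,n)$ satisfy the image of each relator of (\ref{eq:loopbraidrels}): the distant-commutation relators ($|i-j|>1$) hold in any strict monoidal category by bifunctoriality of $\otimes$, since the two local morphisms act on disjoint tensor factors; each of the remaining relators is obtained by whiskering one of (\ref{eq:s21}), (\ref{eq:sss}), (\ref{eq:sigmass})(I)--(III) with identity morphisms on the left and right --- the relator corresponding to $s_i s_{i+1}\sigma_i=\sigma_{i+1}s_i s_{i+1}$ being the form of (\ref{eq:sigmass})(III) pointed out in the Remark after (\ref{de:L'}). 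Since the $\sigma_i,s_i$ generate $\LL'(n,n)$ (noted just above the Lemma), $\pi_n$ is onto.

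To invert $\pi_n$ I would manufacture an auxiliary strict monoidal (natural) category $\mathcal D$ with object monoid $\N$ and $\mathcal D(n,n):=G_n$, whose tensor product is juxtaposition of strands: on morphisms it sends the generators $\sig_i,\rhoo_i$ of the left factor $G_n$ to $\sig_i,\rhoo_i$ of $G_{n+m}$ and those of the right factor $G_m$ to $\sig_{n+j},\rhoo_{n+j}$. This is well defined precisely because the relators of $G_n$ and of the index-shifted $G_m$ are both among the relators of $G_{n+m}$, and the two resulting subgroups commute by the distant relations (checking $\otimes$ is a bifunctor is then routine). Now $\sigma:=\sig_1$ and $s:=\rhoo_1$ in $\mathcal D(2,2)$ generate $\mathcal D$ and satisfy the defining relations (\ref{eq:s21})--(\ref{eq:sigmass}) of $\LL'$ --- they sit among the relators of $G_2$ and $G_3$, using the Remark after (\ref{de:L'}) for (\ref{eq:sigmass})(III) --- so the Lemma of \S\ref{ss:Appg+r} codifying strict monoidal functors yields a unique strict monoidal functor $\Psi\colon\LL'\to\mathcal D$ with $\sigma\mapsto\sigma$, $s\mapsto s$; at level $n$ it is a homomorphism $\Psi_n\colon\LL'(n,n)\to G_n$ with $\sigma_i\mapsto\sig_i$, $s_i\mapsto\rhoo_i$. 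Since $\Psi_n$ and $\pi_n$ fix the generators in both composites, they are mutually inverse, so $\pi_n$ is an isomorphism; pre-composing with $\pi_n$ the canonical identification $G_n=\LL(l_n,l_n)$ gives $\Theta_n$, and the $\Theta_n$ assemble to the desired $\Theta$. (Alternatively, taking as given the monoidal structure on $\LL$ constructed in \S\ref{ss:AppA}, one may bypass $\mathcal D$: apply the codification lemma directly to $\LL$, whose generators $\sig_1,\rhoo_1$ satisfy the $\LL'$-relations via $\mathfrak{D}$, to obtain $\Theta\colon\LL'\to\LL$, and then use $\pi_n$ only to see that each $\Theta_n$ is injective.)

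The step carrying the weight is the construction of $\mathcal D$ and its well-definedness --- equivalently, the assertion that $\LL'(n,n)$ carries no relations beyond those forced by the monoidal presentation. This is exactly where Savushkina's presentation of $G_n$ and the codification lemma do the real work, and where care is needed that index-shifting carries relators to relators and that the interchange law produces the distant-commutation relations with nothing left over. The remaining ingredients --- the whiskering computations producing the local relators, the bifunctoriality observations, and the bookkeeping assembling the $\Theta_n$ into a functor --- are routine.
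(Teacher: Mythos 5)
Correct, and essentially the paper's route: the substance in both is that the Savushkina presentation \eqref{eq:loopbraidrels} of $\LL(l_n,l_n)$ matches, relation for relation, the whiskered monoidal relations \eqref{eq:s21}--\eqref{eq:sigmass} of $\LL'$ together with the interchange (distant-commutation) relations, giving mutually inverse maps on the generators. Your auxiliary category $\mathcal{D}$ and the appeal to the codification lemma simply make rigorous the converse direction that the paper's proof asserts directly (that every relation holding in $\LL'(n,n)$ is a consequence of the images of \eqref{eq:loopbraidrels}); this is a more careful execution of the same argument rather than a different one.
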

\begin{proof}
First observe that since the $\sigma_i$, and $s_i$ generate $\LL'$, there is a unique map $\Theta$ satisfying the conditions of the Lemma and preserving composition. 
For this $\Theta$ to a be a well defined functor we must check that the relations in \eqref{eq:loopbraidrels} are satisfied in $\LL'$. 
    Surjectivity is satisfied since the $\sig_i$ and $\rhoo_i$ generate $\LL$.

We can prove both well-definedness and injectivity by noting that each relation in \eqref{eq:loopbraidrels} corresponds directly to a relation in $\LL'$ and vice versa.
The relations $\sigma_i \sigma_j =\sigma_j\sigma_i$ and $\rho_i\rho_j=\rho_j\rho_i$ when $\vert i-j \vert > 1$ correspond to relations coming from the fact that $\otimes$ is a functor.
    All other relations in \eqref{eq:loopbraidrels} correspond directly to the relations \eqref{eq:s21}, \eqref{eq:sss} and \eqref{eq:sigmass}.
\end{proof}

\section{Applying the recipe, and variations, to \texorpdfstring{$J_3^{\pm}$}{J3}} \label{ss:AppC}

\mdef \label{pa:N3ex}
Consider the set of solutions for $N=3$, thus 
associated to $J_3^{\pm}$ as in (\ref{exa:J})
by the recipe as in (\ref{de:recJ}).
Here firstly we have four solution sets associated to 
$\square^3$ (or eight ignoring $\Sigma_3$ symmetry):
\[
\hspace{-.25in} 
\aalph(F(s)) = (1,1,1,\smat 0&1\\1&0\stam,\smat 0&1\\1&0\stam,\smat 0&1\\1&0\stam),
\hspace{.51cm}
\aalph(F(\sigma))= (A_1, A_2, A_3,
\slosh{12},
\slosh{13},
\slosh{23}
\]
(we use up the gauge symmetry on $s$ and absorb
the effect of the gauge choice in free variables $C_{ij}$);
\[ \hspace{-.25in} 
\aalph(F(s)) = (1,1,-1,\smat 0&1\\1&0\stam,\smat 0&1\\1&0\stam,\smat 0&1\\1&0\stam),
\hspace{.41cm}
\aalph(F(\sigma))= (A_1, A_2, A_3,
\slosh{12},\slosh{13},\slosh{23}
\]
(here the $-1$ can be in each position); 
and similarly to those above with $s \leadsto -s$.
{The other two solutions from $J_3^{\pm}$ are given by two and three   $-1$'s in $\aalph(F(s))$.}
Note that overall this is every possible way of 
extending $\one^3$ in each $N=2$ subspace,
given that these solutions interlock at the vertices.
\\
Next we have four solution sets associated to 
$\square^1\;\two^1$:
\[
\aalph(F(s)) = (\pm1,1,1,\smat 0&1\\1&0\stam,\smat 0&1\\1&0\stam,\smat 1&0\\0&1\stam),
\hspace{.41cm}
\aalph(F(\sigma))= (A_1, A_2, A_2,
\slosh{12},\slosh{13},
\smat A_2&0\\0&A_2\stam)
\]
and $s \leadsto -s$ (with corresponding re-gauging if required).
\\
Four sets associated to $\square^1\;\oneone^1$:
$\;$ for example $(\square^1\;\oneone^1 ,)$ gives
\[ \hspace{-.25in} 
\aalph(F(s)) = (1,1,-1,\smat 0&1\\1&0\stam,\smat 0&1\\1&0\stam,\smat 0&1\\1&0\stam),
\hspace{.41cm}
\aalph(F(\sigma))= (A_1, A_2, A_3,
\slosh{12},\slosh{13},
\smat A_2+A_3&A_2\\-A_3&0\stam)
\]
while  $(\square^1 , \;\oneone^1 )$   gives
\[
\hspace{-.25in} 
\aalph(F(s)) = (1,-1,1,\smat 0&1\\1&0\stam,\smat 0&1\\1&0\stam,\smat 0&1\\1&0\stam),
\hspace{.41cm}
\aalph(F(\sigma))= (A_1, A_2, A_3,
\slosh{12},\slosh{13},
\smat A_2+A_3& -A_2\\ A_3&0\stam)
\]
Two sets associated to $\three^1$:
\[
F(s) = \pm 1_9, \hspace{1cm} F(\sigma) = A_1 1_9
\]
Two sets associated to $\twoone^1$:
\[
\aalph(F(s)) = (1,1,-1,\smat 1&0\\0&1\stam,\smat 0&1\\1&0\stam,\smat 0&1\\1&0\stam),
\hspace{.41cm}
\aalph(F(\sigma))= (A_1, A_1, A_3,
\smat A_{1}&0\\0&A_1\stam,
\smat A_1+A_3&A_1\\-A_{3}&0\stam,
\smat A_1+A_3&A_1\\-A_3&0\stam)
\]
and $s \leadsto -s$.
\\
Two sets associated to $\onetwo^1$:
\[
\aalph(F(s)) = (1,-1,-1,
\smat 0&1\\1&0\stam,
\smat 0&1\\1&0\stam,
\smat 1&0\\0&1\stam
),
\hspace{.41cm}
\aalph(F(\sigma))= (A_1, A_2, A_2,
\smat A_1+A_2&A_1\\-A_{2}&0\stam,
\smat A_1+A_2&A_1\\-A_2&0\stam,
\smat A_{2}&0\\0&A_2\stam
)
\]
and $s \leadsto -s$.

Note from \cite[Prop.5.1]{MR1X} 
that these exhaust the possibilities for $F(\sigma)$ and visit every possibility for $F(s)$ consistent with $N=2$ and the interlocking conditions.

\bibliographystyle{abbrv}
\bibliography{local.bib,HGT2.bib}

\end{document}